\documentclass[12pt, reqno]{amsart}
\usepackage[utf8]{inputenc}
\usepackage{amsmath}
\usepackage{color}
\usepackage{amsfonts}
\usepackage{amssymb}
\usepackage{amsthm}
\usepackage{hyperref}
\oddsidemargin = -0in \evensidemargin = 0in \textwidth =6.5in
\textheight=9in \topmargin=0in

\newcommand{\eps}{\varepsilon}

\newtheorem{theorem}{Theorem}[section]
\newtheorem{proposition}[theorem]{Proposition}
\newtheorem{lemma}[theorem]{Lemma}

\newtheorem{remark}[theorem]{Remark}
\newtheorem{corollary}[theorem]{Corollary}
\theoremstyle{definition}

\numberwithin{equation}{section}

\title{Exceptional characters and nonvanishing of Dirichlet $L$-functions}
\author{Hung M. Bui}
\address{Department of Mathematics, University of Manchester, Manchester M13 9PL, UK}
\email{hung.bui@manchester.ac.uk}
\author{Kyle Pratt}
\address{ {Department of Mathematics \\
 University of Illinois at Urbana-Champaign \\
 1409 West Green Street, Urbana, IL 61801 \\ USA} }
\curraddr{ {All Souls College, Oxford OX1 4AL, United Kingdom} }
\email{{\href{mailto:kyle.pratt@all-souls.ox.ac.uk}{kyle.pratt@all-souls.ox.ac.uk}}, {\href{mailto:kvpratt@gmail.com}{kvpratt@gmail.com}}}
\author{Alexandru Zaharescu}
\address{Department of Mathematics, University of Illinois at Urbana-Champaign, 1409 West Green Street, Urbana, IL 61801, USA
and Simion Stoilow Institute of Mathematics of the Romanian Academy, P.O. Box 1-764, RO-014700 Bucharest,
Romania}
\email{zaharesc@illinois.edu}

\allowdisplaybreaks

\subjclass[2010]{11M06, 11M20. \\ \indent \textit{Keywords and phrases}: Landau-Siegel zeros, exceptional characters, Dirichlet $L$-functions, nonvanishing, central point, lacunary, mollification}

\begin{document}

\maketitle

\begin{abstract}
Let $\psi$ be a real primitive character modulo $D$. If the $L$-function $L(s,\psi)$ has a real zero close to $s=1$, known as a Landau-Siegel zero, then we say the character $\psi$ is exceptional. Under the hypothesis that such exceptional characters exist, we prove that at least fifty percent of the central values $L(1/2,\chi)$ of the Dirichlet $L$-functions $L(s,\chi)$ are nonzero, where $\chi$ ranges over primitive characters modulo $q$ and $q$ is a large prime of size $D^{O(1)}$. Under the same hypothesis we also show that, for almost all $\chi$, the function $L(s,\chi)$ has at most a simple zero at $s = 1/2$.
\end{abstract}

\section{Introduction}

A central question in analytic number theory is to study the vanishing or nonvanishing of $L$-functions at special points. In some instances the vanishing or nonvanishing of an $L$-function has dramatic arithmetic implications, as described for instance by the Birch and Swinnerton-Dyer Conjecture [\textbf{\ref{Wiles}}].

We study the family of primitive Dirichlet $L$-functions modulo $q$ at the central point, where throughout this paper $q$ denotes a large prime. One conjectures that all of the central values $L(1/2,\chi)$ are nonzero; this would follow, for instance, from strong conjectures of Katz and Sarnak [\textbf{\ref{KS99}}] about symmetry types of families of $L$-functions. We are far from proving this conjecture. Even under the Generalized Riemann Hypothesis it is only known that [\textbf{\ref{Murty90}}]
\begin{align*}
\frac{1}{\phi(q)}\sideset{}{^*}\sum_{\substack{\chi(\text{mod }q) \\ L(1/2,\chi) \neq 0}}  1 \geq \frac{1}{2}-o(1),
\end{align*}
where $\sum^*$ denotes summation over all primitive characters. Khan and Ngo [\textbf{\ref{KN}}] showed unconditionally that
\begin{align*}
\frac{1}{\phi(q)}\sideset{}{^*}\sum_{\substack{\chi(\text{mod }q) \\ L(1/2,\chi) \neq 0}}  1 \geq \frac{3}{8}-o(1),
\end{align*}
using the mollification method and deep estimates for sums of Kloosterman sums. The proportion is slightly smaller for general large $q$ [\textbf{\ref{IS}}, \textbf{\ref{B}}].

In addition to studying the central values $L(1/2,\chi)$, one can also ask about the vanishing or nonvanishing of the central derivatives $L^{(k)}(1/2,\chi)$. It was shown by Bui and Milinovich [\textbf{\ref{BM}}] that
\begin{align}\label{eq:BM k deriv lower bound}
\frac{1}{\phi(q)}\sideset{}{^*}\sum_{\substack{\chi(\text{mod }q) \\ L^{(k)}(1/2,\chi) \neq 0}}  1 \geq 1-O(k^{-2})
\end{align}
as $k$ tends to infinity. Again it is believed that, for any nonnegative integer $k$, we have $L^{(k)}(1/2,\chi) \neq 0$ for almost all primitive $\chi$ modulo $q$. Michel and VanderKam [\textbf{\ref{MV}}] also proved earlier a result on the nonvanishing of high derivatives of the completed $L$-functions $\Lambda(s,\chi)$.

We are interested in connecting these questions of nonvanishing with certain hypothetical counterexamples to the Generalized Riemann Hypothesis. It is well-known that, with at most one real exception, there are no zeros of $L(s,\chi)$ in the region
\begin{align*}
\text{Re}(s) \geq 1 - \frac{c}{\log(q|\text{Im}(s)| + 2)}.
\end{align*}
(See [\textbf{\ref{Dav}}; Chapter 14].) If such a real zero exists then $\chi = \psi$ must be a real character. These real zeros would be among the strongest possible contradictions to the Generalized Riemann Hypothesis, and therefore there is great interest in showing these zeros do not exist. The existence or non-existence of these zeros is intimately related to the size of $L(1,\psi)$ and the class number $h(D)$. The lower bound
\begin{align}\label{eq:class number lower bound}
L(1,\psi) \gg D^{-1/2},
\end{align}
which one obtains from Dirichlet's class number formula, is not strong enough for most applications. Landau [\textbf{\ref{L}}]
improved this lower bound, showing
\begin{align*}
L(1,\psi) \gg_\varepsilon D^{-3/8-\varepsilon},
\end{align*}
and Siegel [\textbf{\ref{Si}}] showed that
\begin{align*}
L(1,\psi) \gg_\varepsilon D^{-\varepsilon}.
\end{align*}
The lower bounds of Landau and Siegel are much better than \eqref{eq:class number lower bound}, but they possess a great defect in that the implied constants which depend on $\varepsilon$ are \emph{ineffective}. That is, the proofs do not allow the constants to be computed. The ability to compute this implied constant is very important for certain applications, such as the class number problem. We note that if $\psi$ does not have a real zero close to $s = 1$ then we have the stronger bound
\begin{align*}
L(1,\psi) \gg \frac{1}{\log D}
\end{align*}
with an effective implied constant [\textbf{\ref{FI18}}].

Despite the nuisance that exceptional characters represent, there are many situations in which the existence of these characters allows one to prove theorems out of reach of unconditional methods. One can already see hints of this, with exceptional characters being used to control other characters or quantities, in the proof of Siegel's lower bound for $L(1,\psi)$. By way of illustration, if these exceptional characters do exist, then one can derive the existence of twin primes [\textbf{\ref{HB83}}], small primes in arithmetic progressions [\textbf{\ref{FI03}}], and cancellations in sums of Kloosterman sums over the primes [\textbf{\ref{DM19}}]. We refer the reader to the survey articles of Iwaniec [\textbf{\ref{Iw06}}] and Friedlander and Iwaniec [\textbf{\ref{FI17}}] for further information.

In the present paper we study how the hypothetical existence of Landau-Siegel zeros
may influence (increase) the percentages from \eqref{eq:BM k deriv lower bound} in certain corresponding
ranges. We are especially interested to see if there exists a positive integer $k$ for
which the corresponding percentage is pushed to 100\%. As we shall see below, this is indeed the case. Moreover, a rather weak assumption on the
corresponding Landau-Siegel zero, respectively on $L(1,\psi)$, of the form
$L(1,\psi) \le (\log D)^{-r}$ for some suitable fixed $r$, will be sufficient.

For $k = 0$ we show, under such assumption and in the corresponding range for $q$, that
for at least 50\% of the primitive Dirichlet characters $\chi$ modulo $q$ one has $L(1/2,\chi) \ne 0$. 
Thus, this matches the percentage known under the assumption of the Generalized Riemann Hypothesis.
More precisely, one
has the following result.

\begin{theorem}\label{mthm1}
Let $C > 300$ be a fixed real number. Let $D$ be a positive, squarefree, fundamental discriminant, and let $\psi$ be its associated real primitive character. Assume that $\psi$ is even. Then for any $\varepsilon > 0$ and any prime $q$ satisfying
\begin{align*}
D^{300} \leq q \leq D^{C}
\end{align*}
we have
\begin{align*}
\frac{1}{\phi(q)}\sideset{}{^*}\sum_{\substack{\chi (\emph{mod}\ q) \\ L\left( 1/2,\chi \right) \neq 0}} 1&\geq \frac12 + O_{\eps,C}  \big(L(1,\psi)^{1/2}(\log q)^{25/2+\eps}\big)\\
&\qquad\qquad+O_{\eps,C}  \big(L(1,\psi)(\log q)^{25+\eps}\big)+O_{\eps,C}  \big((\log q)^{-1/2+\eps}\big).
\end{align*}
\end{theorem}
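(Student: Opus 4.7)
The plan is to apply the mollifier method together with Cauchy--Schwarz. Choosing a Dirichlet polynomial
\[
M(\chi) = \sum_{n \leq y}\frac{\mu(n)\chi(n)}{\sqrt n}P\!\left(\frac{\log(y/n)}{\log y}\right), \qquad y = q^\theta,
\]
with $P$ a polynomial satisfying $P(0)=0$ and $P(1)=1$, one obtains
\[
\frac{1}{\phi(q)}\sideset{}{^*}\sum_{\substack{\chi(\text{mod }q) \\ L(1/2,\chi)\neq 0}}\! 1 \;\geq\; \frac{|\mathcal M_1|^2}{\phi(q)\mathcal M_2},
\]
where $\mathcal M_1 = \sideset{}{^*}\sum_\chi L(1/2,\chi)M(\chi)$ and $\mathcal M_2 = \sideset{}{^*}\sum_\chi |L(1/2,\chi)M(\chi)|^2$. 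The first moment $\mathcal M_1$ is evaluated by the standard procedure (approximate functional equation, orthogonality of primitive characters, Mellin--Barnes contour shift) and yields $\mathcal M_1 \sim c_1(P)\phi(q)$. The second moment splits into a diagonal main term of size $2c_2(P,\theta)\phi(q)$ and an off-diagonal shifted-convolution contribution. Classically the mollifier length is restricted to $\theta < 1/2$, and optimizing $P$ yields ratio $\theta/(1+\theta)$; the unconditional choice $\theta \to 1/2$ thus gives only the proportion $1/3$. To reach $1/2$ we must push $\theta$ as close as possible to $1$, far beyond the unconditional range.

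The main obstacle is therefore bounding the off-diagonal contribution to $\mathcal M_2$ when $\theta$ is close to $1$, and this is precisely where the exceptional character hypothesis enters. After the approximate functional equation, orthogonality, and Poisson summation, the off-diagonal is essentially of the shape
\[
\sum_{h\neq 0}\,\sum_{am-bn = hq}\mu(a)\mu(b)(mn)^{-1/2}\Phi(m,n,h)
\]
for a smooth weight $\Phi$. Expressing these sums via Mellin--Barnes integrals and shifting contours past $s = 1$, the residues and the surviving integrals can be expressed through the factorization $L(s,\chi)L(s,\chi\psi)$ of ``pseudo-$L$-function'' type, at the cost of weights involving $L(1,\psi)$. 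The smallness assumption $L(1,\psi) \leq (\log D)^{-r}$ then renders both the residues and the shifted integrals negligible; by applying a further Cauchy--Schwarz to the $\psi$-twisted piece and standard convexity on $L(s,\chi\psi)$, one obtains bounds of the shape $L(1,\psi)^{1/2}(\log q)^{O(1)}$ and $L(1,\psi)(\log q)^{O(1)}$, which is how the first two error terms in the theorem are produced.

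Finally, with the off-diagonal under control, I would optimize $P$ so that the diagonal ratio $|\mathcal M_1|^2/(\phi(q)\mathcal M_2)$ tends to $1/2$, with a residual $(\log q)^{-1/2+\varepsilon}$ loss from the mollification error and the polynomial optimization. The exponent $25$ in the $\psi$-dependent errors reflects accumulated $\log$-losses from the length of the mollifier (close to $q$), from multiple divisor-type functions appearing in the shifted convolutions, and from the intermediate Cauchy--Schwarz on the $\psi$-twisted piece. The range $D^{300} \leq q \leq D^C$ ensures that the scales of $D$ and $q$ are polynomially related, so that powers of $\log q$ and powers of $L(1,\psi)$ remain on comparable footing throughout the argument.
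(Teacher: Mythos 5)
Your proposal takes a genuinely different route from the paper, and as sketched it has a fundamental gap: you never explain mechanically how the exceptional character $\psi$ enters your argument. You mollify $L(1/2,\chi)$ directly with a $\mu$-mollifier of length $q^\theta$ and aim to push $\theta$ close to $1$, asserting that the residues and shifted integrals arising from the off-diagonal ``can be expressed through the factorization $L(s,\chi)L(s,\chi\psi)$,'' producing $L(1,\psi)$-weighted error terms. But nothing in the off-diagonal sum $\sum_{am-bn=hq}\mu(a)\mu(b)(mn)^{-1/2}\Phi(m,n,h)$ involves $\psi$ at all: after orthogonality the character $\chi$ has been summed out, and the convolution that remains is purely a $\mu$-convolution over the modulus $q$. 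The exceptional character has to be fed into the problem structurally from the start, and this is exactly what the paper does: it replaces $L(1/2,\chi)$ by $L_\chi(1/2)=L(1/2,\chi)L(1/2,\chi\psi)$, whose Dirichlet coefficients $(1\star\psi)(n)$ are lacunary (see \eqref{eq: lacunarity}), and uses the matching mollifier with coefficients $\rho=\mu\star(\mu\psi)$.

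The two strategies then diverge sharply. In your plan a long mollifier is essential (you need $\theta\to 1$ to lift $\theta/(1+\theta)$ up to $1/2$), and you would have to evaluate — not merely bound — the off-diagonal main terms for $\theta>1/2$, which is exactly the deep regime (Kloosterman sums and spectral theory à la Young, Blomer--Fouvry--Kowalski--Michel--Mili\'cevi\'c, etc.) that the paper explicitly avoids. The paper instead takes the mollifier length $X=D^{20}$, which is at most $q^{1/15}$ given $q\ge D^{300}$ — a \emph{very short} mollifier, well inside the classical range. It gets away with this because lacunarity of $(1\star\psi)$ makes $L_\chi(1/2)M(\chi)$ essentially the deterministic quantity $V_1(1/Q)+\epsilon(\chi)\epsilon(\chi\psi)V_2(1/Q)\approx 1+\epsilon(\chi)\epsilon(\chi\psi)$, as in \eqref{mollifier1}, up to Dirichlet polynomials $B_i(\chi)$ whose mean square is small by Proposition \ref{mainprop}. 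The clean factor $1/2$ then comes from the root-number structure (the first moment is $\approx\phi^+(q)$ while the second is $\approx 2\phi^+(q)$, the cross term averaging out via a Kloosterman-sum bound), not from optimizing a mollifier polynomial $P$ as $\theta\to 1$. Your claim that the residual error $(\log q)^{-1/2+\varepsilon}$ comes ``from the mollification error and polynomial optimization'' is likewise off: in the paper it is the square root (via Cauchy--Schwarz) of the $(\log q)^{-1+\varepsilon}$ error in Proposition \ref{mainprop}. To fix your proposal you would need to change the object being mollified to $L_\chi$ with the $\rho$-mollifier, which is the crucial structural idea you are missing.
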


We work under the assumptions that $D$ is squarefree and $\psi$ is even, but one could easily extend our arguments to handle other cases.

Next, we prove under similar assumptions that the percentage increases to 100\%
already when $k=1$.
In other words, the proportion of primitive Dirichlet characters $\chi$ for which $L(s,\chi)$ has a multiple
zero at 1/2 is {\it zero}. By way of comparison, under the Generalized Riemann Hypothesis it is only known that this proportion is $\le 1/4$.
We will prove the following result.

\begin{theorem}\label{mthm2} 
Let $C, D$, $\psi$ and $q$ be as in the statement of Theorem \ref{mthm1}. Then for any $\varepsilon > 0$ we have
\begin{align*}
\frac{1}{\phi(q)} \sideset{}{^*}\sum_{\substack{\chi(\textup{mod }q) \\ L(1/2,\chi)= L'(1/2,\chi) = 0}} 1 \ll_{\eps,C}  L(1,\psi)^{1/2}(\log q)^{25/2+\eps}+L(1,\psi)(\log q)^{25+\eps}+(\log q)^{-1/2+\eps}.
\end{align*}
\end{theorem}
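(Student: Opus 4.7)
The plan is to adapt the Cauchy--Schwarz argument that underlies Theorem \ref{mthm1} by working with a mollified linear combination of $L(1/2,\chi)$ and $L'(1/2,\chi)$. Let $M(\chi)$ be the mollifier used in the proof of Theorem \ref{mthm1}, and for a real parameter $c$ of order $1/\log q$ (to be optimised) define
\[
\widetilde{L}(\chi) := L(1/2,\chi) + c\, L'(1/2,\chi).
\]
Since $L(1/2,\chi) = L'(1/2,\chi) = 0$ forces $\widetilde{L}(\chi) = 0$, we have
\[
\sideset{}{^*}\sum_{\substack{\chi(\textup{mod }q) \\ L(1/2,\chi)=L'(1/2,\chi)=0}} 1 \;\le\; \sideset{}{^*}\sum_\chi 1 \;-\; \sideset{}{^*}\sum_{\substack{\chi(\textup{mod }q) \\ \widetilde{L}(\chi)M(\chi)\ne 0}} 1,
\]
while Cauchy--Schwarz yields
\[
\sideset{}{^*}\sum_{\substack{\chi(\textup{mod }q) \\ \widetilde{L}(\chi)M(\chi)\ne 0}} 1 \;\ge\; \frac{\bigl|\sideset{}{^*}\sum_\chi \widetilde{L}(\chi)M(\chi)\bigr|^2}{\sideset{}{^*}\sum_\chi \bigl|\widetilde{L}(\chi)M(\chi)\bigr|^2}.
\]
It therefore suffices to prove that, for an optimal $c$, the right-hand side equals $\phi(q)\bigl(1-O(\textup{err})\bigr)$ with the same error $O(\textup{err})$ that appears in Theorem \ref{mthm1}.

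Expanding the numerator and denominator of the Rayleigh quotient reduces the task to the evaluation of the five moments $\sideset{}{^*}\sum_\chi L^{(j)}(1/2,\chi)M(\chi)$ for $j\in\{0,1\}$ and $\sideset{}{^*}\sum_\chi L^{(j)}(1/2,\chi)\overline{L^{(k)}(1/2,\chi)}|M(\chi)|^2$ for $j,k\in\{0,1\}$. The cases $j=k=0$ are already furnished by the proof of Theorem \ref{mthm1}. The moments involving the derivative can be obtained by differentiating the corresponding shifted-moment asymptotic formulae in a shift parameter, in the style of [\textbf{\ref{BM}}] and [\textbf{\ref{MV}}]. Optimising over $c$ reduces the maximum of the Rayleigh quotient to the largest generalised eigenvalue attached to the first-moment vector and the $2\times 2$ covariance matrix of $(L(1/2,\cdot)M,L'(1/2,\cdot)M)$; the claim is that under the exceptional character hypothesis this eigenvalue equals $\phi(q)\bigl(1-o(1)\bigr)$ with errors of the shape stated in Theorem \ref{mthm2}.

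The main technical obstacle is the asymptotic evaluation of the second moment of $L'(1/2,\chi)M(\chi)$ and of the cross moment $\sideset{}{^*}\sum_\chi L(1/2,\chi)\overline{L'(1/2,\chi)}|M(\chi)|^2$ with sufficient uniformity. These parallel the second-moment calculation underlying Theorem \ref{mthm1}, but they are complicated by extra logarithmic factors and additional contour-shift contributions produced by the differentiation; in particular, one must track carefully how the poles at shifted points interact with the mollifier coefficients. The decisive input from the hypothesis $L(1,\psi)\le(\log D)^{-r}$ is that it makes the mollification of $L(s,\chi)$ effectively optimal, so that once derivative information is added the combined Cauchy--Schwarz ratio is pushed from the value $\tfrac12$ underlying Theorem \ref{mthm1} up to $1-o(1)$, which is exactly what is needed to conclude Theorem \ref{mthm2}.
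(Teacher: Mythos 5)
You have the right high-level idea (mollify a linear combination of the value and its derivative, then apply Cauchy--Schwarz to bound the vanishing set), but there are two genuine gaps that make the argument as written incomplete, and they both stem from not working with the product $L$-function $L_\chi(s)=L(s,\chi)L(s,\chi\psi)$.

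First, you take $\widetilde{L}(\chi) = L(1/2,\chi) + c\,L'(1/2,\chi)$ and multiply by $M(\chi)$. But the mollifier $M(\chi)$ defined in \eqref{mollifier} has coefficients $\rho(a)=(\mu\star\mu\psi)(a)$, i.e.\ it is a truncation of $L_\chi(s)^{-1}$, not of $L(s,\chi)^{-1}$. Multiplying $L^{(j)}(1/2,\chi)$ by this $M(\chi)$ does not produce a quantity that is close to a constant on average, and the moments $\sum^*_\chi L^{(j)}(1/2,\chi)M(\chi)$ and $\sum^*_\chi L^{(j)}(1/2,\chi)\overline{L^{(k)}(1/2,\chi)}|M(\chi)|^2$ are not the objects to which the entire lacunarity machinery (Proposition~\ref{mainprop}, the coefficients $(1\star\psi)(n)$, Lemma~\ref{lem:divisor twisted by lacunary}, the Voronoi formula) applies. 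The paper instead notes the implication: $L(1/2,\chi)=L'(1/2,\chi)=0$ forces $L_\chi(1/2)=L_\chi'(1/2)=0$, and in particular $L_\chi(1/2)+\tfrac{1}{2\log Q}L_\chi'(1/2)=0$. It then mollifies \emph{this} combination of $L_\chi$ and $L_\chi'$, precisely because that is what $M(\chi)$ is designed for and what the exceptional-character framework controls.

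Second, and more seriously, you assert without justification that the optimised Rayleigh quotient equals $\phi(q)(1-o(1))$. That is the entire content of the theorem and the step where the exceptional-character hypothesis must enter; it cannot be waved through as "the claim." In the paper, the mechanism is concrete and comes from the approximate functional equation (Lemma~\ref{boundVW}). For $L_\chi(1/2)$ the weights satisfy $V_1(1/Q)\approx V_2(1/Q)\approx 1$, so the first moment is $\phi^+(q)\cdot1$ while the squared moment picks up $|V_1(1/Q)|^2+|V_2(1/Q)|^2\approx 2$, giving ratio $1/2$. After replacing $V_i$ by the $W_i$ arising from the derivative combination with coefficient $1/(2\log Q)$, one has $W_1(1/Q)\approx 1$ but $W_2(1/Q)=O((\log Q)^{-1})$: the dual/root-number term essentially disappears, so the second moment collapses from $2\phi^+(q)$ to $\phi^+(q)$ and the ratio becomes $1-o(1)$. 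This is the decisive observation, and it is not at all visible in a generic "eigenvalue of the $2\times2$ covariance matrix" framing, nor in a plan to "differentiate shifted-moment asymptotics." Without identifying this cancellation, there is no reason your Rayleigh quotient should exceed the $\phi(q)/2$ barrier of Theorem~\ref{mthm1}.

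To repair the argument, replace $L(1/2,\chi)+cL'(1/2,\chi)$ by $L_\chi(1/2)+\tfrac{1}{2\log Q}L_\chi'(1/2)$ (with the explicit coefficient), use the approximate functional equation \eqref{formulaL'} with weights $W_1,W_2$, and invoke the estimates $W_1(1/Q)=\tfrac12+O((\log Q)^{-1})$, $W_2(1/Q)=O((\log Q)^{-1})$ from Lemma~\ref{boundVW}. The first- and second-moment computations then go through verbatim as in the proof of Theorem~\ref{mthm1} via Proposition~\ref{mainprop}, and the ratio comes out as $1+O(\text{err})$ because only one of the two main terms survives.
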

We end the introduction by noting that our work may be interpreted, in some respects, as a $q$-aspect analogue of the beautiful work of Conrey and Iwaniec [\textbf{\ref{CI}}].

\section{Setup}

Throughout we let $D>1$ be a squarefree fundamental discriminant and let $\psi$ be a primitive even quadratic Dirichlet character modulo $D$. (These restrictions are merely for technical convenience. One could prove a Voronoi summation formula like that in the Appendix for fundamental discriminants $D \equiv 0 \pmod{4}$, say. Then one needs to take extra care since $D$ is not squarefree at two.) We think of $D$ as being quite small compared to $q$, which is a large prime. The work is unconditional. With more effort we could reduce the exponent of $\log q$ and extend the range of $q$ in Theorem \ref{mthm1} and Theorem \ref{mthm2}, but in the interest of clarity we have chosen not to do so.

We treat the characters $\chi$ modulo $q$ according to their parity to ensure that the $L$-functions under consideration have the same gamma factors in their functional equations. In this work we shall deal exclusively with even Dirichlet characters, but our arguments go through identically for odd characters.

We are interested in the values of $L_\chi(1/2)$, where
\begin{align*}
L_\chi(s):=L\left(s,\chi \right)L\left(s,\chi \psi \right).
\end{align*}
The reason for studying this product of $L$-values, instead of just $L(1/2,\chi)$, is that we have
\begin{align*}
L_\chi(s) = \sum_{n\geq1} \frac{(1\star\psi)(n)\chi(n)}{n^s},
\end{align*}
and the function $(1\star\psi)(n)$ vanishes very often if $\psi$ is exceptional. An elementary calculation [\textbf{\ref{IK}}; (22.109)] shows that
\begin{align}\label{eq: 1 star psi asymp}
\sum_{n \leq x} \frac{(1\star\psi)(n)}{n} = L(1,\psi) \left( \log x + \gamma\right) + L'(1,\psi) + O \big(x^{-1/2}D^{1/4}\log x\big),
\end{align}
and therefore
\begin{align}\label{eq: lacunarity}
\sum_{D^{2} < n \leq x} \frac{(1\star\psi)(n)}{n} \leq L(1,\psi) \log x.
\end{align}
(Note that we have used here the lower bound \eqref{eq:class number lower bound}.) If $\log x \asymp \log q \asymp \log D$ and $\psi$ is an exceptional character, then this quantity is small. We expect to be able to use this (conjecturally non-existent) lacunarity to great effect.

We shall use the mollification method to bound the proportion of characters $\chi$ for which $L(1/2,\chi) \neq 0$, or $L^{(k)}(1/2,\chi) \neq 0$ in general. By the Cauchy-Schwarz inequality we have
\begin{align}\label{eq:cauchy schwarz setup}
\frac{\left|\sum_{\chi (\text{mod}\ q)}^+ L_\chi( 1/2)M(\chi) \right|^2}{\sum_{\chi (\text{mod}\ q)}^+ \left|L_\chi(1/2)M(\chi) \right|^2} &\leq \sideset{}{^+}\sum_{\substack{\chi (\text{mod}\ q) \\ L\chi( 1/2) \neq 0}} 1\leq \sideset{}{^+}\sum_{\substack{\chi (\text{mod}\ q) \\ L\left( 1/2,\chi \right) \neq 0}} 1,
\end{align}
where $\sum^+$ denotes summation over all primitive even characters and $M(\chi)$ is a mollifier to dampen large values of $L_\chi( 1/2)$. As
\begin{align*}
L_\chi(s)^{-1} =  \sum_{n\geq1} \frac{\big(\mu\star(\mu\psi)\big)(n)\chi(n)}{n^s},
\end{align*}
our mollifier is chosen to be
\begin{align}\label{mollifier}
M(\chi): = \sum_{\substack{a \leq X\\D\nmid a}} \frac{ \rho(a)\chi(a)}{\sqrt{a}},
\end{align}
where 
\[
\rho(a)=\big(\mu\star(\mu\psi)\big)(a).
\]
Here $X=q^\kappa$ for some $\kappa>0$, and by lacunarity we anticipate that even a small $\kappa\ $ should be sufficient. Note that to make our off-diagonal term analysis slightly easier we have removed the terms $D|a$ (see also [\textbf{\ref{CI}}; p. 267]). It is easy to see that $\rho(a)$ is the multiplicative function supported on cubefree integers defined by
\begin{align*}
\rho(p) = -\big(1+\psi(p)\big)\qquad\text{and} \qquad \rho(p^2) = \psi(p).
\end{align*}

Since $D$ is small compared to $q$, the mollified first and second moments in \eqref{eq:cauchy schwarz setup} are more similar to the mollified second and fourth moments of $L(1/2,\chi)$. In contrast to recent works on the mollified moments of $L(1/2,\chi)$ (e.g. [\textbf{\ref{Y}}, \textbf{\ref{H}}, \textbf{\ref{BFKMM}}, \textbf{\ref{Z}}, \textbf{\ref{BPRZ}}]), however, we do not appeal to the spectral theory of automorphic forms. The reason for this is that, instead of utilizing an approximate function equation for $\left|L_\chi(1/2) \right|^2$, we obtain an expression for $\left|L_\chi(1/2) \right|^2$ by squaring the approximate functional equation for $L_\chi(1/2)$. This ensures that the ranges of the summation variables in our expression for $\left|L_\chi(1/2) \right|^2$ are short and almost the same size, at the cost of introducing root numbers. In our exceptional setting the root numbers play almost no role, and we may eliminate them with the Cauchy-Schwarz inequality. The off-diagonal terms are then handled with the delta method of Duke, Friedlander and Iwaniec [\textbf{\ref{DFI}}] and an appeal to the Weil bound for Kloosterman sums.

\begin{remark}
\emph{Throughout the paper we use $\varepsilon$ to denote an arbitrarily small positive number, and $C$ to denote a fixed large positive constant. These values may change from one line to the next.}
\end{remark}

\section{Various lemmas}

\begin{lemma}[Orthogonality]\label{ortho}
For $(mn,q)=1$ we have
\[
\sideset{}{^+}\sum_{\chi(\emph{mod}\ q)} \chi(m)\overline{\chi}(n)=\frac 12\sum_{\substack{d|q\\d|(m\pm n)}}\mu\Big(\frac qd\Big)\phi(d)=\mathbf{1}_{q|(m\pm n)}\frac{\phi(q)}{2}-1.
\]
\end{lemma}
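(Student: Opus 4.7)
The plan is to reduce to standard primitive-character orthogonality by isolating the even characters with a parity projector. Since $\chi(-1)\chi(m)=\chi(-m)$, the expression $\tfrac{1}{2}(1+\chi(-1))$ picks out the even characters, so I would first write
$$\sideset{}{^+}\sum_{\chi(\text{mod }q)} \chi(m)\overline\chi(n) = \frac{1}{2}\sideset{}{^*}\sum_{\chi(\text{mod }q)} \big(\chi(m)+\chi(-m)\big)\overline\chi(n).$$

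Next I would invoke the standard evaluation of sums of primitive characters: every Dirichlet character modulo $q$ is induced by a unique primitive character modulo some $d\mid q$, so Möbius inversion applied to the elementary identity $\sum_{\chi(\text{mod }d)} \chi(a)= \phi(d)\mathbf{1}_{a\equiv 1\, (d)}$ yields
$$\sideset{}{^*}\sum_{\chi(\text{mod }q)} \chi(a)\overline\chi(b) = \sum_{\substack{d\mid q\\ d\mid a-b}} \mu(q/d)\phi(d)$$
whenever $(ab,q)=1$. Applying this with $(a,b)=(m,n)$ and with $(a,b)=(-m,n)$ and summing gives the first equality claimed in the lemma.

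Finally, I would specialize to prime $q$. The only divisors are $d=1,q$, contributing $\mu(q)\phi(1)=-1$ and $\mu(1)\phi(q)=\phi(q)$ respectively, so each inner sum collapses to $\phi(q)\mathbf{1}_{q\mid a-b}-1$. The combined expression then becomes
$$\tfrac{1}{2}\bigl[\phi(q)\mathbf{1}_{q\mid m-n}-1+\phi(q)\mathbf{1}_{q\mid m+n}-1\bigr] = \tfrac{\phi(q)}{2}\bigl(\mathbf{1}_{q\mid m-n}+\mathbf{1}_{q\mid m+n}\bigr)-1.$$
Because $q$ is an odd prime with $(m,q)=1$, the two congruences $m\equiv n$ and $m\equiv -n\pmod q$ cannot simultaneously hold (this would force $q\mid 2m$), so the indicator sum collapses to $\mathbf{1}_{q\mid(m\pm n)}$, which is the second equality.

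There is no serious obstacle here; the argument is a bookkeeping exercise in character sum orthogonality. The only step requiring any care is the verification of mutual exclusivity of the divisibility conditions $q\mid m-n$ and $q\mid m+n$, which is essential for the final combination into a single indicator $\mathbf{1}_{q\mid(m\pm n)}$.
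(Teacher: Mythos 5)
Your proof is correct. The paper states this lemma without proof, as it is a standard orthogonality relation for primitive even characters, so there is no argument in the paper to compare against; your derivation, using the parity projector $\tfrac12(1+\chi(-1))$ to isolate the even characters, M\"obius inversion over conductors to express the primitive-character sum as $\sum_{d\mid q,\, d\mid(a-b)}\mu(q/d)\phi(d)$, and then specialization to prime $q$, is the standard one, and your closing observation that $q\mid(m-n)$ and $q\mid(m+n)$ are mutually exclusive (since $q$ is an odd prime and $(m,q)=1$) correctly justifies collapsing the two indicators into the single $\mathbf{1}_{q\mid(m\pm n)}$.
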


For $\chi$ a primitive even character modulo $q$, the Dirichlet $L$-function $L(s,\chi)$ satisfies the functional equation
\begin{align*}
\Lambda(s,\chi)&:=\Big(\frac{q}{\pi}\Big)^{s/2}\Gamma\Big(\frac{s}{2}\Big)L(s,\chi)\\
& =\epsilon(\chi)\Lambda(1-s,\overline\chi),
\end{align*}
where
$$ \epsilon(\chi)= \frac{\tau(\chi)}{q^{1/2}}= \frac{1}{q^{1/2}}\ \sideset{}{^*} \sum_{a(\text{mod}\ \! q)} \chi(a) e\Big(\frac{a}{q}\Big).$$ Note that $|\epsilon(\chi)|=1$. A similar functional equation holds for $L(s,\chi\psi)$, and hence
\begin{align}\label{afe}
\Lambda_\chi(s)&:=\Lambda(s,\chi)\Lambda(s,\chi\psi)\nonumber\\
& =\epsilon(\chi)\epsilon(\chi\psi)\Lambda_{\overline\chi}(1-s).
\end{align}

We need an approximate functional equation to represent the $L$-values.

\begin{lemma}
Let
\begin{equation}\label{formulaV}
V_1(\alpha,x)=\frac{1}{2\pi i}\int_{(1)}\frac{\Gamma(\frac{1/2+ \alpha+s}{2})^2}{\Gamma(\frac{1/2+\alpha}{2})^2}x^{-s}\frac{ds}{s}\quad\text{and}\quad V_2(\alpha,x)=\frac{1}{2\pi i}\int_{(1)}\frac{\Gamma(\frac{1/2- \alpha+s}{2})^2}{\Gamma(\frac{1/2+\alpha}{2})^2}x^{-s}\frac{ds}{s}.
\end{equation}
Then we have
\begin{align}\label{formulaL}
L_\chi(\tfrac{1}{2}+\alpha)=\sum_{n}\frac{(1\star\psi)(n)\chi(n)}{n^{1/2+\alpha}}V_{1}\Big(\alpha,\frac{n}{Q}\Big)+\epsilon(\chi)\epsilon(\chi\psi)Q^{-2\alpha}\sum_{n}\frac{(1\star\psi)(n)\overline{\chi}(n)}{n^{1/2-\alpha}}V_{2}\Big(\alpha,\frac{n}{Q}\Big),
\end{align}
where
\[
Q=\frac{q\sqrt{D}}{\pi}.
\]
\end{lemma}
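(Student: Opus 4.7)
This is a standard derivation of an approximate functional equation for a degree-two $L$-function, adapted to the product $L_\chi(s) = L(s,\chi)L(s,\chi\psi)$. The plan is first to verify that the completed product admits the clean form
\[
\Lambda_\chi(s) = Q^{s}\,\Gamma(s/2)^2\,L_\chi(s),\qquad Q = \frac{q\sqrt{D}}{\pi}.
\]
Since $q$ is a large prime, $(q,D)=1$, so $\chi\psi$ is primitive even of conductor $qD$, giving $\Lambda(s,\chi\psi) = (qD/\pi)^{s/2}\Gamma(s/2)L(s,\chi\psi)$; multiplying by the analogous expression for $\Lambda(s,\chi)$ yields the identity.

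Next I would form the auxiliary contour integral
\[
J := \frac{1}{2\pi i}\int_{(1)} \frac{\Lambda_\chi(1/2+\alpha+s)}{\Gamma\big((1/2+\alpha)/2\big)^2\, Q^{1/2+\alpha}}\,\frac{ds}{s}.
\]
Substituting the product form for $\Lambda_\chi$ converts the integrand into $Q^{s}\,G_\alpha(s)\,L_\chi(1/2+\alpha+s)\,s^{-1}$, where $G_\alpha(s) = \Gamma((1/2+\alpha+s)/2)^2/\Gamma((1/2+\alpha)/2)^2$. Since the Dirichlet series for $L_\chi$ converges absolutely on the line $\Re(s)=1$, I can expand it and interchange sum and integral to obtain $J=\sum_n (1\star\psi)(n)\chi(n) n^{-1/2-\alpha}V_1(\alpha,n/Q)$, matching the first sum of \eqref{formulaL}.

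To produce the dual expression, I would shift the contour from $\Re(s)=1$ to $\Re(s)=-1$, crossing a simple pole at $s=0$ whose residue evaluates to $L_\chi(1/2+\alpha)$ (the $Q^{1/2+\alpha}$ and the gamma ratio cancel). On the shifted contour I substitute $s\mapsto -s$ and apply the functional equation \eqref{afe} in the form $\Lambda_\chi(1/2+\alpha-s) = \epsilon(\chi)\epsilon(\chi\psi)\Lambda_{\overline\chi}(1/2-\alpha+s)$. Expanding $\Lambda_{\overline\chi}$ by its product form produces the prefactor $Q^{-2\alpha}$ and precisely the gamma ratio defining $V_2$; expanding $L_{\overline\chi}$ as a Dirichlet series then yields the second sum of \eqref{formulaL}. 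Equating the two evaluations of $J$ proves the lemma.

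The argument has no substantive obstacle: one only needs to justify the contour shift, which follows from Stirling's formula (for the decay of the $\Gamma$-ratio on vertical lines) combined with the standard convexity bound for $L_\chi$ in vertical strips, and to check absolute convergence of the Dirichlet series on the chosen contours. Both are entirely routine.
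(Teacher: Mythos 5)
Your argument is correct and coincides with the paper's own proof: both introduce the contour integral of $\Lambda_\chi(1/2+\alpha+s)\,s^{-1}$ (with a $\Gamma((1/2+\alpha)/2)^2$ normalization), pick up the residue at $s=0$, shift to $\Re(s)=-1$, substitute $s\mapsto -s$, apply the functional equation \eqref{afe}, and expand the Dirichlet series termwise; the only cosmetic difference is that you divide by $Q^{1/2+\alpha}$ from the outset rather than at the end.
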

\begin{proof}
Using Cauchy's theorem we have
\[
\frac{1}{2\pi i}\int_{(1)}\frac{\Lambda_\chi(1/2+\alpha+s)}{\Gamma(\frac{1/2+\alpha}{2})^2}\frac{ds}{s}=\text{Res}_{s=0}+\frac{1}{2\pi i}\int_{(-1)}\frac{\Lambda_\chi(1/2+\alpha+s)}{\Gamma(\frac{1/2+\alpha}{2})^2}\frac{ds}{s}.
\]
Clearly the residue at $s=0$ is
\begin{displaymath}
Q^{1/2+\alpha}L_\chi(\tfrac{1}{2}+\alpha).
\end{displaymath}
By a change of variables $s\leftrightarrow-s$ and using \eqref{afe}, we then obtain
\begin{displaymath}
Q^{1/2+\alpha}L_\chi(\tfrac{1}{2}+\alpha)=\frac{1}{2\pi i}\int_{(1)}\frac{\Lambda_\chi(1/2+\alpha+s)}{\Gamma(\frac{1/2+\alpha}{2})^2}\frac{ds}{s}+\frac{\epsilon(\chi)\epsilon(\chi\psi)}{2\pi i}\int_{(1)}\frac{\Lambda_{\overline{\chi}}(1/2-\alpha+s)}{\Gamma(\frac{1/2+\alpha}{2})^2}\frac{ds}{s}.
\end{displaymath}
The lemma now follows by writing $\Lambda_\chi$ in terms of Dirichlet series and then integrating term-by-term.
\end{proof}

For $i=1,2$ we denote
\[
V_i(x)=V_i(0,x)\qquad\text{and}\qquad \partial V_i(x)=\partial_{\alpha}V_i(\alpha,x)\big|_{\alpha=0}.
\]
Observe that $V_1(x)=V_2(x)$. Taking derivative of \eqref{formulaL} with respect to $\alpha$ and setting $\alpha=0$ we obtain the following corollary.

\begin{corollary}
Let
\[
W_{1}(x)=\Big(\frac12-\frac{\log x}{2\log Q}\Big)V_1(x)+\frac{\partial V_1(x)}{2\log Q}\quad\text{and}\quad W_{2}(x)=\Big(\frac12+\frac{\log x}{2\log Q}\Big)V_2(x)+\frac{\partial V_2(x)}{2\log Q}.
\]
Then we have
\begin{equation}\label{formulaL'}
L_\chi(\tfrac{1}{2})+\frac{1}{2\log Q}L_\chi'(\tfrac12)=\sum_{n}\frac{(1\star\psi)(n)\chi(n)}{\sqrt{n}}W_{1}\Big(\frac{n}{Q}\Big)+\epsilon(\chi)\epsilon(\chi\psi)\sum_{n}\frac{(1\star\psi)(n)\overline{\chi}(n)}{\sqrt{n}}W_{2}\Big(\frac{n}{Q}\Big).
\end{equation}
\end{corollary}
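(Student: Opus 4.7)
The corollary is a direct calculational consequence of \eqref{formulaL}, as the author hints: one differentiates both sides with respect to $\alpha$, sets $\alpha=0$, and takes an appropriate linear combination with the original identity. Concretely, write \eqref{formulaL} as $L_\chi(\tfrac12 + \alpha) = A_1(\alpha) + A_2(\alpha)$, where $A_1,A_2$ denote the two sums on the right, and consider
\[
L_\chi(\tfrac12+\alpha) + \frac{1}{2\log Q}\,\frac{d}{d\alpha}L_\chi(\tfrac12+\alpha) = A_1(\alpha) + \frac{A_1'(\alpha)}{2\log Q} + A_2(\alpha) + \frac{A_2'(\alpha)}{2\log Q}.
\]
Evaluated at $\alpha=0$, the left-hand side is exactly $L_\chi(\tfrac12) + \tfrac{1}{2\log Q}L_\chi'(\tfrac12)$, which matches the left-hand side of \eqref{formulaL'}.

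For $A_1$, differentiating term-by-term (justified by the rapid decay of $V_1(\alpha,x)$ and $\partial_\alpha V_1(\alpha,x)$ in $x$, obtained by contour shifting in \eqref{formulaV}) yields
\[
A_1'(0) = \sum_n \frac{(1\star\psi)(n)\chi(n)}{\sqrt n}\bigl(-\log n\cdot V_1(n/Q) + \partial V_1(n/Q)\bigr).
\]
Combining with $A_1(0)$ leaves the bracket $(1 - \tfrac{\log n}{2\log Q})V_1(n/Q) + \tfrac{\partial V_1(n/Q)}{2\log Q}$ inside the sum. The key algebraic observation is $\log n = \log(n/Q) + \log Q$, so that $1 - \tfrac{\log n}{2\log Q} = \tfrac12 - \tfrac{\log(n/Q)}{2\log Q}$, and the bracket collapses to $W_1(n/Q)$ exactly.

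For $A_2$ the computation is analogous but picks up two additional logarithmic pieces from differentiating $Q^{-2\alpha}$ and $n^{\alpha}$, namely $-2\log Q$ and $+\log n$. After dividing by $2\log Q$, the $-2\log Q$ cancels the $A_2(0)$ contribution of $V_2(n/Q)$, leaving $\tfrac{\log n}{2\log Q}V_2(n/Q) + \tfrac{\partial V_2(n/Q)}{2\log Q}$ in the sum. Applying the same substitution $\log n = \log(n/Q) + \log Q$ converts this into $W_2(n/Q)$, completing the identity.

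The only analytical issue is justifying differentiation under the summation and integration signs, but this is immediate from the rapid decay of $V_i(\alpha,x)$ noted above. Consequently there is no serious obstacle; the proof is essentially a careful bookkeeping exercise in which the choice of coefficient $\tfrac{1}{2\log Q}$ in the linear combination is precisely what makes the stray $\pm 2\log Q$ term produced by the $Q^{\pm 2\alpha}$ prefactors align correctly with the \emph{half}-coefficient $\tfrac12$ appearing in the definition of $W_1$ and $W_2$.
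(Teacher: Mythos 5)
Your proof is correct and matches the paper's intended argument: the paper's one-sentence remark "Taking derivative of \eqref{formulaL} with respect to $\alpha$ and setting $\alpha=0$" implicitly means forming the linear combination $L_\chi(\tfrac12+\alpha)+\tfrac{1}{2\log Q}\partial_\alpha L_\chi(\tfrac12+\alpha)$ and evaluating at $\alpha=0$, exactly as you spell out. Your bookkeeping of the $\mp 2\log Q$ prefactor from $Q^{\pm2\alpha}$, and the identity $\log n = \log(n/Q)+\log Q$ that converts the coefficient $1-\tfrac{\log n}{2\log Q}$ (resp.\ $\tfrac{\log n}{2\log Q}$) into $\tfrac12\mp\tfrac{\log(n/Q)}{2\log Q}$, is exactly right.
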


We collect the properties of the functions $V_i$ and $W_i$, $i=1,2$, in the following lemma.

\begin{lemma}\label{boundVW}
The Mellin transforms $\widetilde{V_i}(s)$ and $\widetilde{W_i}(s)$ decay rapidly as $|s|\rightarrow\infty$ in the half plane $\emph{Re}(s)>-1/2+\varepsilon$ and have only a pole at $s=0$. For $s$ sufficiently small we can write
\begin{align*}
\widetilde{V_i}(s) = \frac{c_{-2}}{s^2} + \frac{c_{-1}}{s} + H(s),
\end{align*}
where $H$ is holomorphic, $|c_{-2}| \ll (\log Q)^{-1}$, and $|c_{-1}| \ll 1$. The same is true for $\widetilde{W_i}(s)$. Also,
\[
x^jV_i^{(j)}(x),\,x^jW_i^{(j)}(x)\ll_{j,C} (1+x)^{-C}
\]
for any fixed $j\geq0$ and $C > 0$. Furthermore, we have
\[
V_i(x)=1+O_\varepsilon\big(x^{1/2-\varepsilon}\big),
\]
\[
W_1(x)=\frac12-\frac{\log x}{2\log Q}+O_\varepsilon\big(x^{1/2-\varepsilon}\big)
\]
and
\[
W_2(x)=\frac12+\frac{\log x}{2\log Q}-\frac{2\Gamma'(1/4)}{\Gamma(1/4)}\frac{1}{\log Q}+O_\varepsilon\big(x^{1/2-\varepsilon}\big).
\]
\end{lemma}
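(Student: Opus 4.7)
My plan is to read off the Mellin transforms of $V_i$ and $W_i$ directly from their contour integral definitions, and then deduce all the stated properties by contour shifts and Stirling's formula. Setting $G(s) := \Gamma((1/2+s)/2)^2/\Gamma(1/4)^2$, equation \eqref{formulaV} at $\alpha=0$ exhibits $V_i(x)$ as the inverse Mellin transform of $G(s)/s$, so $\widetilde{V_i}(s)=G(s)/s$; differentiating \eqref{formulaV} in $\alpha$ at $\alpha=0$ likewise yields $\widetilde{\partial V_1}(s) = G(s)[\psi((1/2+s)/2)-\psi(1/4)]/s$ and $\widetilde{\partial V_2}(s) = -G(s)[\psi((1/2+s)/2)+\psi(1/4)]/s$, where $\psi=\Gamma'/\Gamma$ is the digamma function. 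Using that the Mellin transform of $(\log x)f(x)$ equals $\widetilde{f}'(s)$, the definitions of $W_1$ and $W_2$ translate into $\widetilde{W_1}(s) = \tfrac12\widetilde{V_1}(s) - \tfrac{1}{2\log Q}\widetilde{V_1}'(s) + \tfrac{1}{2\log Q}\widetilde{\partial V_1}(s)$ and the analogous formula for $\widetilde{W_2}$ (with opposite sign on the $\widetilde{V_2}'$ term). Since $\Gamma$ has poles only at non-positive integers, each of these Mellin transforms is meromorphic on $\text{Re}(s)>-1/2$ with its only singularity at $s=0$, and Stirling's bound $|\Gamma(\sigma+it)|\ll_\sigma |t|^{\sigma-1/2}e^{-\pi|t|/2}$ combined with the at-most logarithmic growth of $\psi$ yields rapid (in fact exponential) decay along vertical lines.

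To extract the Laurent coefficients at $s=0$ I expand $G(s)=1+G'(0)s+O(s^2)$ and $\psi((1/2+s)/2)=\psi(1/4)+O(s)$ and multiply out. For $\widetilde{V_i}(s)=1/s+O(1)$ this gives $c_{-2}=0$ and $c_{-1}=1$; for $\widetilde{W_i}(s)$ the order-two pole comes entirely from the $(2\log Q)^{-1}\widetilde{V_i}'(s)$ piece, and $\widetilde{V_i}'(s)=-1/s^2+O(1)$ produces $|c_{-2}|=1/(2\log Q)\ll 1/\log Q$; the remaining $c_{-1}$ is a bounded combination of $1/2$, $G'(0)$ and $\Gamma'(1/4)/\Gamma(1/4)$ (the latter arising from $\widetilde{\partial V_2}(s)=-2\psi(1/4)/s+O(1)$), hence $|c_{-1}|\ll 1$.

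For the pointwise bounds $x^jV_i^{(j)}(x),\,x^jW_i^{(j)}(x)\ll_{j,C}(1+x)^{-C}$, differentiating $V_i(x)=\frac{1}{2\pi i}\int_{(1)}G(s)x^{-s}\,ds/s$ under the integral sign $j$ times and multiplying by $x^j$ gives a contour integral with integrand $(-1)^jG(s)(s+1)(s+2)\cdots(s+j-1)x^{-s}$ (the $1/s$ cancelling for $j\geq1$), holomorphic on the entire strip $\text{Re}(s)>-1/2$. For $x\geq 1$ I shift the line to $\text{Re}(s)=C$ and invoke the exponential Gamma decay to obtain $\ll x^{-C}$; for $0<x\leq 1$ I shift to $\text{Re}(s)=-1/2+\eps$, crossing at most a bounded residue at $s=0$, after which the remaining integral is $\ll x^{1/2-\eps}\ll 1$. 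The same shifts handle $W_i$, the extra digamma and $1/\log Q$ factors being harmless. Finally, the asymptotics for $V_i(x)$, $W_1(x)$ and $W_2(x)$ as $x\to 0^+$ follow by shifting the inverse-Mellin contour from $\text{Re}(s)=1$ to $\text{Re}(s)=-1/2+\eps$ and reading off the residues of $\widetilde{V_i}(s)x^{-s}$ and $\widetilde{W_i}(s)x^{-s}$ at $s=0$, where the identity $\text{Res}_{s=0}(x^{-s}/s^2)=-\log x$ produces the $\log x/(2\log Q)$ contributions. The argument is routine throughout; the only care required is in tracking the digamma evaluations at $1/4$ so as to correctly identify the $\Gamma'(1/4)/\Gamma(1/4)$ constant appearing in the expansion of $W_2$, and I anticipate no substantive obstacle.
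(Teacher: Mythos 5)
Your proposal is correct and follows essentially the same route as the paper: read off $\widetilde{V_i}(s)=\Gamma(1/4+s/2)^2/(\Gamma(1/4)^2\,s)$ and the corresponding formulas for $\widetilde{\partial V_i}$ and $\widetilde{W_i}$ directly from \eqref{formulaV}, shift the contour right for the decay of $x^jV_i^{(j)}(x)$ and $x^jW_i^{(j)}(x)$, and shift it left past the pole at $s=0$ to extract the small-$x$ expansions. The only cosmetic difference is that the paper packages the $W_i$ asymptotics via the expansions of $V_i(x)$ and $\partial V_i(x)$ while you compute the Laurent data of $\widetilde{W_i}$ and then take the residue of $\widetilde{W_i}(s)x^{-s}$; these are the same calculation in different order.
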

\begin{proof}
The first statement can be proved by direct calculations. For instance, it is easy to see from the definition of the Mellin transform that
\[
\widetilde{V_i}(s)=\frac{1}{\Gamma(1/4)^2s}\Gamma\Big(\frac{1}{4}+\frac{s}{2}\Big)^2
\]
and
\begin{align*}
\widetilde{W_1}(s)&=\frac{1}{2\Gamma(1/4)^2}\Gamma\Big(\frac{1}{4}+\frac{s}{2}\Big)^2\bigg(\Big(1-\frac{\Gamma'(1/4)}{\Gamma(1/4)\log Q}\Big)\frac{1}{s}+\frac{1}{(\log Q)s^2}\bigg).
\end{align*}

The second statement is obtained by moving the lines of integration in \eqref{formulaV} to $\text{Re}(s)=C$. For the last statement we move the contours to $\text{Re}(s)=-1/2+\varepsilon$ getting
\[
V_i(x)=1+O_\varepsilon\big(x^{1/2-\varepsilon}\big),
\]
\[
\partial V_1(x)=O_\varepsilon\big(x^{1/2-\varepsilon}\big)\qquad\text{and}\qquad \partial V_2(x)=-\frac{2\Gamma'(1/4)}{\Gamma(1/4)}+O_\varepsilon\big(x^{1/2-\varepsilon}\big).
\]The estimates for $W_1$ and $W_2$ then follow.
\end{proof}

\section{Main proposition and deduction of Theorem \ref{mthm1} and Theorem \ref{mthm2}}

We shall prove Theorem \ref{mthm1} and Theorem \ref{mthm2} by considering the mollified first and second moments of $L_\chi(1/2)$ and $L_\chi(1/2)+L_\chi'(1/2)/2\log Q$, respectively. For this we first rewrite \eqref{formulaL} and \eqref{formulaL'} in the following form:
\begin{align*}
L_\chi(\tfrac{1}{2})&=V_{1}\Big(\frac{1}{Q}\Big)+\epsilon(\chi)\epsilon(\chi\psi)V_2\Big(\frac{1}{Q}\Big)\\
&\qquad\qquad +\sum_{n>1}\frac{(1\star\psi)(n)\chi(n)}{\sqrt{n}}V_{1}\Big(\frac{n}{Q}\Big)+\epsilon(\chi)\epsilon(\chi\psi)\overline{\sum_{n>1}\frac{(1\star\psi)(n)\chi(n)}{\sqrt{n}}V_{2}\Big(\frac{n}{Q}\Big)}
\end{align*}
and
\begin{align*}
&L_\chi(\tfrac{1}{2})+\frac{1}{2\log Q}L_\chi'(\tfrac{1}{2})=W_{1}\Big(\frac{1}{Q}\Big)+\epsilon(\chi)\epsilon(\chi\psi)W_2\Big(\frac{1}{Q}\Big)\\
&\qquad\qquad+\sum_{n>1}\frac{(1\star\psi)(n)\chi(n)}{\sqrt{n}}W_{1}\Big(\frac{n}{Q}\Big)+\epsilon(\chi)\epsilon(\chi\psi)\overline{\sum_{n>1}\frac{(1\star\psi)(n)\chi(n)}{\sqrt{n}}W_{2}\Big(\frac{n}{Q}\Big)}.
\end{align*}
We now let
\[
X=D^{20}
\]
and multiply the above expressions with the mollifier \eqref{mollifier} to obtain
\begin{equation}\label{mollifier1}
L_\chi(\tfrac12)M(\chi)=V_1\Big(\frac{1}{Q}\Big)+\epsilon(\chi)\epsilon(\chi\psi)V_2\Big(\frac{1}{Q}\Big)+O\big(|B_1(\chi)|\big)+O\big(|B_2(\chi)|\big)
\end{equation}
and
\begin{equation}\label{mollifier2}
\Big(L_\chi(\tfrac{1}{2})+\frac{1}{2\log Q}L_\chi'(\tfrac{1}{2})\Big)M(\chi)=W_1\Big(\frac{1}{Q}\Big)+\epsilon(\chi)\epsilon(\chi\psi)W_2\Big(\frac{1}{Q}\Big)+O\big(|C_1(\chi)|\big)+O\big(|C_2(\chi)|\big),
\end{equation}
where the Dirichlet polynomials  $B_i(\chi)$ and $C_i(\chi)$, $i=1,2$, are given by
\begin{align*}
&B_i(\chi)=\sum_{\substack{a\leq X\\D\nmid a\\ an>1}}\frac{\rho(a)(1\star\psi)(n)\chi(an)}{\sqrt{an}}V_i\Big(\frac{n}{Q}\Big),\qquad C_i(\chi)=\sum_{\substack{a\leq X\\D\nmid a\\ an>1}}\frac{\rho(a)(1\star\psi)(n)\chi(an)}{\sqrt{an}}W_i\Big(\frac{n}{Q}\Big).
\end{align*}
Our next proposition will show that the second moments of $B_i(\chi)$ and $C_i(\chi)$ are negligible for certain ranges of $q$ and $D$.

Let $V:\mathbb{R}_+\rightarrow\mathbb{C}$ be such that the Mellin transform $\widetilde{V}(s)$ decays rapidly as $|s|\rightarrow\infty$ for $\text{Re}(s)>-1/2+\varepsilon$ and has only a pole at $s=0$ with principal part
\[
\frac{v_2}{(\log Q)s^2}+\frac{v_1}{s}.
\] Let 
\[
B(\chi)=\sum_{\substack{a\leq X\\D\nmid a\\ an>1}}\frac{\rho(a)(1\star\psi)(n)\chi(an)}{\sqrt{an}}V\Big(\frac{n}{Q}\Big).
\]
In the remaining sections, we shall prove the following key proposition.

\begin{proposition}\label{mainprop}
Let $C>300$ be a fixed real number. Then for any prime $q$ satisfying
\[
D^{300}\leq q\leq D^C
\] 
we have
\[
\sideset{}{^+}\sum_{\chi (\emph{mod}\ q)}\big|B(\chi)\big|^2\ll_{\eps,C} L(1,\psi)q(\log q)^{25+\eps}+q(\log q)^{-1+\eps}.
\]
\end{proposition}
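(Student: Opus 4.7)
The approach is to expand the square, apply the orthogonality relation of Lemma \ref{ortho}, and separately treat the resulting diagonal and off-diagonal contributions, following the strategy sketched in the Setup section. Expanding $|B(\chi)|^2$ as a fourfold sum in $(a_1, n_1, a_2, n_2)$ and summing over primitive even characters, Lemma \ref{ortho} yields
\begin{align*}
\sideset{}{^+}\sum_{\chi(\text{mod }q)} |B(\chi)|^2 = \frac{\phi(q)}{2}\bigl(\mathcal{D} + \mathcal{O}\bigr) - |B^\flat|^2,
\end{align*}
where $\mathcal{D}$ collects the terms with $a_1 n_1 = a_2 n_2$ (the $+$ diagonal is empty because $a_i n_i > 0$), $\mathcal{O}$ consists of the off-diagonal pairs with $a_1 n_1 \equiv \pm a_2 n_2 \pmod q$ but $a_1 n_1 \neq a_2 n_2$, and $B^\flat$ is an easily bounded absolute sum arising from the $-1$ term in Lemma \ref{ortho}.

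For the diagonal I would group the contribution by $m = a_1 n_1 = a_2 n_2$ and represent both copies of $V$ via their Mellin transforms, obtaining
\begin{align*}
\mathcal{D} = \frac{1}{(2\pi i)^2}\int_{(2)}\!\int_{(2)} \widetilde V(s_1)\,\overline{\widetilde V(\overline{s_2})}\, Q^{s_1 + s_2}\, \mathcal{Z}(s_1, s_2)\, ds_1\, ds_2,
\end{align*}
where $\mathcal{Z}(s_1, s_2)$ is a Dirichlet series that factors, via the Euler products of $\rho$ and $1\star\psi$ together with the crucial identity $\rho\star(1\star\psi) = \delta_1$, essentially into shifted copies of $\zeta$ and $L(\cdot,\psi)$, modulo Euler-factor corrections encoding the truncation $a \leq X$ (which I would handle through an auxiliary Perron integral). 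Shifting the $s_1, s_2$ contours past $s_1 + s_2 = 0$ and collecting residues, every surviving residue evaluates $L(s,\psi)$ or one of its low-order derivatives near $s = 1$; by \eqref{eq: 1 star psi asymp} and \eqref{eq: lacunarity}, each such quantity is of size $O\bigl(L(1,\psi)(\log D)^{O(1)}\bigr)$. The power $(\log q)^{25+\eps}$ results from combining the double pole $v_2/((\log Q)s^2)$ of $\widetilde V$, the double pole of $\zeta(1 + s_1 + s_2)^{2}$, and the additional logarithms produced by derivatives of $L(s,\psi)$; this yields the first term in the claimed bound.

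For the off-diagonal I would detect the congruences $a_1 n_1 \equiv \pm a_2 n_2 \pmod q$ via additive characters modulo $q$, in the spirit of the $\delta$-method of Duke, Friedlander and Iwaniec. Since $a_1, a_2 \leq X = D^{20}$ and $q \geq D^{300}$, the $a_i$ are coprime to $q$ and can be frozen. For each nonzero frequency $b \bmod q$, I would apply the Voronoi summation formula for $(1\star\psi)$ established in the Appendix to both of the inner sums over $n_1$ and $n_2$. This transforms the additively twisted sums into dual sums whose arithmetic content is carried by Kloosterman sums of modulus essentially $qD$. The Weil bound $|S(m,n;c)| \leq (m,n,c)^{1/2} c^{1/2} \tau(c)$, combined with the prefactor $q^{-1}$ from the additive character decomposition, the rapid decay of $\widetilde V$ truncating the dual variables, and the factor $1/\log Q$ built into the coefficient $c_{-2}$ of the leading pole of $\widetilde V$ (Lemma \ref{boundVW}), produces the second term $q(\log q)^{-1+\eps}$.

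The principal obstacle is the diagonal analysis: writing $\mathcal{Z}(s_1, s_2)$ in a form amenable to contour shifting, verifying that every residue comes equipped with at least one factor of $L(1,\psi)$ (so that the lacunarity of $1\star\psi$ is fully exploited), and bookkeeping the power of $\log q$ contributed by the double poles of $\widetilde V$, of $\zeta(1 + s_1 + s_2)^{2}$, and of the derivatives of $L(s,\psi)$. The interaction between the Perron truncation $a \leq X$ and the Mellin calculus requires care but is essentially routine. The off-diagonal step, while technically involved, is comparatively standardized once the Appendix's Voronoi formula and the Weil bound are in place.
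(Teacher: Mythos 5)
Your high-level skeleton---orthogonality, diagonal plus off-diagonal, Voronoi summation, Weil bound---matches the paper, but the technical execution diverges at several points the paper goes out of its way to avoid, and there is one genuine gap.

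For the diagonal you propose a Mellin/$L$-function computation, writing a double Mellin integral of a Dirichlet series $\mathcal{Z}(s_1,s_2)$ and shifting contours. The paper explicitly rejects this route (see the remark at the start of Section 6.2: the exceptional situation makes the frequency-space approach difficult) and instead handles the triple sum over $h,a,b$ combinatorially, repeatedly factoring variables according to $\psi(p)\in\{-1,0,1\}$ and using Lemma~\ref{lem:divisor twisted by lacunary} to exploit lacunarity. The difficulty you wave away as ``routine'' is precisely what this combinatorial machinery is built to surmount: the truncation $a\le X=D^{20}$ is a fixed small power of $D$, and the lacunarity of $1\star\psi$ lives in physical space, not in residues. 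Your claim that ``every surviving residue \ldots is of size $O(L(1,\psi)(\log D)^{O(1)})$'' is also incorrect: after mollification the leading diagonal residue is the order-one constant $(v_1+v_2)^2$ appearing in \eqref{554}, which is not suppressed by $L(1,\psi)$ and must instead cancel against the subtracted $\frac{\phi(q)}{2}V(1/Q)^2$. Your attribution of the log powers is likewise off---the $(\log q)^{25+\eps}$ in the paper accumulates from repeated applications of Lemma~\ref{lem:divisor twisted by lacunary}, not from poles of $\zeta$ or $L'(1,\psi)$, and the $q(\log q)^{-1+\eps}$ comes from the residue/contour error terms, not from the $1/\log Q$ in the coefficient $c_{-2}$.

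For the off-diagonal you would detect $a_1 n_1 \equiv \pm a_2 n_2 \pmod q$ with additive characters modulo $q$, producing dual Kloosterman sums of modulus roughly $qD$. The paper does something structurally different: it rewrites the congruence as the integer equation $am\mp bn = qr$ with $0<|r|\ll(aM+bN)/q$ and applies the DFI delta method to that equation, so the Kloosterman sums have small modulus $\ell<2Z\ll(abMN)^{1/4}(aM+bN)^{-1/2}$. Calling your version ``in the spirit of the $\delta$-method'' conflates two distinct devices. More seriously, you dismiss the $-1$ piece of Lemma~\ref{ortho} as an ``easily bounded'' quantity $|B^\flat|^2$; in fact this term has a main contribution $L(1,\psi)^2\widetilde{V}(\tfrac12)^2 Q\sum_{a,b\le X}\rho(a)\rho(b)/\sqrt{ab}$ (see \eqref{mainformulaI}) that is not individually of acceptable size and must cancel exactly against the off-diagonal main term $\mathcal{M}_2^{OD}$ in \eqref{701}. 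Your sketch omits this cancellation entirely, and without it the estimate does not close. Identifying and carrying out that cancellation is a core part of the paper's proof, not a technicality.
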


With this result, we can now prove Theorem \ref{mthm1} and Theorem \ref{mthm2}.

\begin{proof}[Proof of Theorem \ref{mthm1}]
Applying Proposition \ref{mainprop} to $B_1(\chi)$ and $B_2(\chi)$ we get
\begin{align}\label{firstsquare}
\sideset{}{^+}\sum_{\chi (\text{mod}\ q)}\big|B_i(\chi)\big|^2\ll_{\eps,C} L(1,\psi)q(\log q)^{25+\eps}+q(\log q)^{-1+\eps}
\end{align}
for $i=1,2$. The Cauchy-Schwarz inequality then implies
\begin{equation}\label{first0}
\sideset{}{^+}\sum_{\chi (\text{mod}\ q)}|B_i(\chi)\big|\ll_{\eps,C} L(1,\psi)^{1/2}q(\log q)^{25/2+\eps}+q(\log q)^{-1/2+\eps}.
\end{equation}
In view of \eqref{mollifier1} we have
\begin{align}\label{first1}
\sideset{}{^+}\sum_{\chi (\text{mod}\ q)} L_\chi(\tfrac12)M(\chi)&=\phi^+(q)V_1\Big(\frac{1}{Q}\Big)+V_2\Big(\frac{1}{Q}\Big)\ \sideset{}{^+}\sum_{\chi (\text{mod}\ q)}\epsilon(\chi)\epsilon(\chi\psi)\\
&\qquad\qquad+O_{\eps,C} \big(L(1,\psi)^{1/2}q(\log q)^{25/2+\eps}\big)+O_{\eps,C} \big(q(\log q)^{-1/2+\eps}\big),\nonumber
\end{align}
where $\phi^+(q)$ is the number of primitive even characters modulo $q$. 

Since $(q,D) = 1$ ($q$ is a prime greater than $D$), an elementary calculation with the Chinese remainder theorem shows that
\begin{align*}
\epsilon(\chi \psi) = \chi(D) \psi(q) \epsilon(\chi) \epsilon(\psi).
\end{align*}
So, using the definition of $\epsilon(\chi)$, 
\[
\sideset{}{^+}\sum_{\chi (\text{mod}\ q)}\epsilon(\chi)\epsilon(\chi\psi)=\frac{\psi(q) \epsilon(\psi)}{q}\ \sideset{}{^*}\sum_{a,b (\text{mod}\ q)}e\Big(\frac{a+b}{q}\Big) \sideset{}{^+}\sum_{\chi (\text{mod}\ q)}\chi(Dab) ,
\]
which, by the orthogonality in Lemma \ref{ortho}, is equal to
\begin{align*}
&\frac{\psi(q) \epsilon(\psi)\phi(q)}{2q}\ \sideset{}{^*}\sum_{\substack{a,b (\text{mod}\ q)\\Dab \equiv \pm  1 (\text{mod}\ q)}}e\Big(\frac{a+b}{q}\Big)-\frac{\psi(q) \epsilon(\psi)}{q}\ \sideset{}{^*}\sum_{a,b (\text{mod}\ q)}e\Big(\frac{a+b}{q}\Big)\\
&\qquad=\frac{\psi(q) \epsilon(\psi)\phi(q)}{2q}\big(S(1,\overline{D};q)+S(1,-\overline{D};q)\big)-\frac{\psi(q) \epsilon(\psi)}{q}.
\end{align*}
Hence, by the Weil bound,
\begin{equation}\label{first2}
\sideset{}{^+}\sum_{\chi (\text{mod}\ q)}\epsilon(\chi)\epsilon(\chi\psi)\ll q^{1/2}.
\end{equation}
From \eqref{first1}, \eqref{first2} and Lemma \ref{boundVW} we then get
\begin{align*}
\sideset{}{^+}\sum_{\chi (\text{mod}\ q)} L_\chi(\tfrac12)M(\chi)&=\phi^+(q)+O_{\eps,C} \big(L(1,\psi)^{1/2}q(\log q)^{25/2+\eps}\big)+O_{\eps,C} \big(q(\log q)^{-1/2+\eps}\big).
\end{align*}

For the second moment, we first square \eqref{mollifier1} to obtain
\begin{align*}
\big|L_\chi(\tfrac12)M(\chi)\big|^2&=\bigg|V_1\Big(\frac{1}{Q}\Big)\bigg|^2+\bigg|V_2\Big(\frac{1}{Q}\Big)\bigg|^2+2\text{Re}\bigg(\epsilon(\chi)\epsilon(\chi\psi)V_1\Big(\frac{1}{Q}\Big)V_2\Big(\frac{1}{Q}\Big)\bigg)\\
&\qquad\qquad+O\big(|B_1(\chi)|+|B_1(\chi)|^2\big)+O\big(|B_2(\chi)|+|B_2(\chi)|^2\big).
\end{align*}
Using \eqref{firstsquare}, \eqref{first0}, \eqref{first2} and Lemma \ref{boundVW} we get
\begin{align*}
\sideset{}{^+}\sum_{\chi (\text{mod}\ q)} \big|L_\chi(\tfrac12)M(\chi)\big|^2&=2\phi^+(q)+O_{\eps,C} \big(L(1,\psi)^{1/2}q(\log q)^{25/2+\eps}\big)\\
&\qquad\qquad+O_{\eps,C} \big(L(1,\psi)q(\log q)^{25+\eps}\big)+O_{\eps,C} \big(q(\log q)^{-1/2+\eps}\big).
\end{align*}
The Cauchy-Schwarz inequality then leads to
\begin{align*}
\sideset{}{^+}\sum_{\substack{\chi (\text{mod}\ q) \\ L\left( 1/2,\chi \right) \neq 0}} 1\geq \sideset{}{^+}\sum_{\substack{\chi (\text{mod}\ q) \\ L\chi( 1/2) \neq 0}} 1&\geq \frac{\left|\sum_{\chi (\text{mod}\ q)}^+ L_\chi( 1/2)M(\chi) \right|^2}{\sum_{\chi (\text{mod}\ q)}^+ \left|L_\chi(1/2)M(\chi) \right|^2}\\
&=\frac{\phi^+(q)}{2}+O_{\eps,C} \big(L(1,\psi)^{1/2}q(\log q)^{25/2+\eps}\big)\\
&\qquad\qquad+O_{\eps,C} \big(L(1,\psi)q(\log q)^{25+\eps}\big)+O_{\eps,C} \big(q(\log q)^{-1/2+\eps}\big),
\end{align*}
which completes the proof.
\end{proof}

\begin{proof}[Proof of Theorem \ref{mthm2}]
Using \eqref{mollifier2}, we can see that the exact argument above can be applied to evaluate the first and second moments of 
\[
\Big(L_\chi(\tfrac12)+\frac{1}{2\log Q}L_\chi'(\tfrac 12)\Big)M(\chi).
\]
The only difference here, as in Lemma \ref{boundVW}, is
\[
W_2\Big(\frac 1L\Big)=O\big((\log Q)^{-1}\big),
\]
while in the previous case we have
\[
V_2\Big(\frac 1L\Big)=1+O_\varepsilon\big(Q^{-1/2+\varepsilon}\big).
\]
So, as before,
\begin{align*}
\sideset{}{^+}\sum_{\chi (\text{mod}\ q)} \Big(L_\chi(\tfrac12)+\frac{1}{2\log Q}L_\chi'(\tfrac 12)\Big)M(\chi)&=\phi^+(q)+O_{\eps,C} \big(L(1,\psi)^{1/2}q(\log q)^{25/2+\eps}\big)\\
&\qquad\qquad+O_{\eps,C} \big(q(\log q)^{-1/2+\eps}\big),
\end{align*}
but
\begin{align*}
&\sideset{}{^+}\sum_{\chi (\text{mod}\ q)} \bigg|\Big(L_\chi(\tfrac12)+\frac{1}{2\log Q}L_\chi'(\tfrac 12)\Big)M(\chi)\bigg|^2=\phi^+(q)+O_{\eps,C} \big(L(1,\psi)^{1/2}q(\log q)^{25/2+\eps}\big)\\
&\qquad\qquad+O_{\eps,C} \big(L(1,\psi)q(\log q)^{25+\eps}\big)+O_{\eps,C} \big(q(\log q)^{-1/2+\eps}\big).
\end{align*}
Applying the Cauchy-Schwarz inequality we obtain
 \begin{align*}
\sideset{}{^+}\sum_{\substack{\chi (\text{mod}\ q) \\ L_\chi(1/2)+L_\chi'(1/2)/2\log Q \neq 0}} 1&\geq \phi^+(q)+O_{\eps,C} \big(L(1,\psi)^{1/2}q(\log q)^{25/2+\eps}\big)\\
&\qquad\qquad+O_{\eps,C} \big(L(1,\psi)q(\log q)^{25+\eps}\big)+O_{\eps,C} \big(q(\log q)^{-1/2+\eps}\big).
\end{align*}
Observe that
\begin{align*}
\sideset{}{^+}\sum_{\substack{\chi(\textup{mod }q) \\ L(1/2,\chi)= L'(1/2,\chi) = 0}} 1 \leq  \phi^+(q)-\sideset{}{^+}\sum_{\substack{\chi (\text{mod}\ q) \\ L_\chi(1/2)+L_\chi'(1/2)/2\log Q \neq 0}} 1,
\end{align*}
and we obtain the theorem.
\end{proof}

\section{Proof of Proposition \ref{mainprop} - preparations}

We wish to evaluate
\begin{align*}
I &= \sideset{}{^+}\sum_{\chi (\text{mod}\ q)}\bigg|\sum_{\substack{a\leq X\\D\nmid a\\ an>1}}\frac{\rho(a)(1\star\psi)(n)\chi(an)}{\sqrt{an}}V\Big(\frac{n}{Q}\Big)\bigg|^2,
\end{align*}
where $X=D^{20}$. We apply Lemma \ref{ortho} to obtain
\begin{align*}
I &=\frac{\phi(q)}{2}\sum_{\substack{am\equiv \pm bn(\text{mod}\ q) \\a,b\leq X\\D\nmid a,\,D\nmid b\\am,bn>1\\ (mn,q)=1}}\frac{\rho(a)\rho(b)(1\star\psi)(m)(1\star\psi)(n)}{\sqrt{abmn}}V\Big(\frac{m}{Q}\Big)V\Big(\frac{n}{Q}\Big)\\
&\qquad\qquad\qquad\qquad-\sum_{\substack{a,b\leq X\\D\nmid a,\,D\nmid b\\am,bn>1\\ (mn,q)=1}}\frac{\rho(a)\rho(b)(1\star\psi)(m)(1\star\psi)(n)}{\sqrt{abmn}}V\Big(\frac{m}{Q}\Big)V\Big(\frac{n}{Q}\Big).
\end{align*}
Note that the condition $(mn,q)=1$ may be omitted with the cost of an error of size $O_\varepsilon(q^{\varepsilon}\sqrt{D}X)$. So we can write
\begin{align}\label{formulaI}
I&=\frac{\phi(q)}{2}\sum_{\substack{am\equiv \pm bn(\text{mod}\ q) \\a,b\leq X\\D\nmid a,\,D\nmid b\\am,bn>1}}\frac{\rho(a)\rho(b)(1\star\psi)(m)(1\star\psi)(n)}{\sqrt{abmn}}V\Big(\frac{m}{Q}\Big)V\Big(\frac{n}{Q}\Big)\nonumber\\
&\qquad\qquad\qquad\qquad-\bigg(\sum_{\substack{a\leq X\\D\nmid a\\am>1}}\frac{\rho(a)(1\star\psi)(m)}{\sqrt{am}}V\Big(\frac{m}{Q}\Big)\bigg)^2+O_\varepsilon\big(q^{\varepsilon}\sqrt{D}X\big)\nonumber\\
&=I^{D}+I^{OD}-\bigg(\sum_{\substack{a\leq X\\D\nmid a\\am>1}}\frac{\rho(a)(1\star\psi)(m)}{\sqrt{am}}V\Big(\frac{m}{Q}\Big)\bigg)^2+O_\varepsilon\big(q^{\varepsilon}\sqrt{D}X\big),
\end{align}
where $I^{D}$ and $I^{OD}$ are the contributions from the diagonal terms $am=bn$ and the off-diagonal terms $am\ne bn$ in the first sum, respectively. We shall estimate $I^{D}$ in Section \ref{diagonal1} and $I^{OD}$ in Section \ref{offdiagonal1}. We finish this section with the evaluation of the third term in \eqref{formulaI}. 

We have
\begin{equation}\label{1000}
\sum_{\substack{a\leq X\\D\nmid a\\am>1}}\frac{\rho(a)(1\star\psi)(m)}{\sqrt{am}}V\Big(\frac{m}{Q}\Big)=\sum_{\substack{a\leq X\\D\nmid a}}\frac{\rho(a)}{\sqrt{a}}\sum_{m\geq1}\frac{(1\star\psi)(m)}{\sqrt{m}}V\Big(\frac{m}{Q}\Big)-V\Big(\frac{1}{Q}\Big).
\end{equation}
Using Mellin inversion,
\begin{align*}
&\sum_{m\geq 1} \frac{(1\star\psi)(m)}{\sqrt{m}}V\Big(\frac{m}{Q}\Big)=\frac{1}{2\pi i}\int_{(1)}\widetilde{V}(u)Q^u\zeta(\tfrac12+u)L(\tfrac12+u,\psi)du,
\end{align*}
and by moving the line of integration to $\text{Re}(u)=\eps$, we see that it is equal to
\[
L(1,\psi)\widetilde{V}(\tfrac12)Q^{1/2}+O_\eps(Q^\eps).
\]
So the expression in \eqref{1000} is $$L(1,\psi)\widetilde{V}(\tfrac12)Q^{1/2}\sum_{\substack{a\leq X\\D\nmid a}}\frac{\rho(a)}{\sqrt{a}}+O_\eps\big(q^{\eps}X^{1/2}\big),$$ and hence
\begin{align}\label{mainformulaI}
I=I^{D}+I^{OD}-L(1,\psi)^2\widetilde{V}(\tfrac12)^2Q\sum_{\substack{a,b\leq X\\D\nmid a,\,D\nmid b}}\frac{\rho(a)\rho(b)}{\sqrt{ab}}+O_\varepsilon\big(L(1,\psi)q^{1/2+\varepsilon}D^{1/4}X\big).
\end{align}

\section{The diagonal $I^{D}$}\label{diagonal1}

\subsection{Preparations}

We have
\[
I^D=\frac{\phi(q)}{2}\sum_{\substack{am= bn\\a,b\leq X\\D\nmid a,\,D\nmid b}}\frac{\rho(a)\rho(b)(1\star\psi)(m)(1\star\psi)(n)}{\sqrt{abmn}}V\Big(\frac{m}{Q}\Big)V\Big(\frac{n}{Q}\Big)-\frac{\phi(q)}{2}V\Big(\frac{1}{Q}\Big)^2.
\]
We can remove the conditions $D\nmid a$ and $D\nmid b$ at the cost of an error of size $O_\eps(q^{1+\eps}D^{-1})$. For the condition $am=bn$, we may change the variables $a\rightarrow ha$, $b\rightarrow hb$, $m\rightarrow bn$ and $n\rightarrow an$ with $(a,b)=1$. Therefore
\begin{align*}
I^D &= \frac{\phi(q)}{2}\sum_{h \leq X} \frac{1}{h}\sum_{\substack{a,b \leq X/h \\ (a,b)=1}} \frac{\rho(ha) \rho(hb)}{ab}\sum_{n\geq 1} \frac{(1\star\psi)(an)(1\star\psi)(bn)}{n}V\Big(\frac{an}{Q}\Big)V\Big(\frac{bn}{Q}\Big)\\
&\qquad\qquad-\frac{\phi(q)}{2}V\Big(\frac{1}{Q}\Big)^2+O_\eps\big(q^{1+\eps}D^{-1}\big).
\end{align*}

We first evaluate the sum over $n$. Writing $V$ in terms of its Mellin transform we get
\begin{align*}
&\sum_{n\geq 1} \frac{(1\star\psi)(an)(1\star\psi)(bn)}{n}V\Big(\frac{an}{Q}\Big)V\Big(\frac{bn}{Q}\Big)\\
&\qquad\qquad=\frac{1}{(2\pi i)^2}\int_{(c)}\int_{(c)}\widetilde{V}(u)\widetilde{V}(v)\Big(\frac{Q}{a}\Big)^u\Big(\frac{Q}{b}\Big)^{v}\sum_{n\geq 1} \frac{(1\star\psi)(an)(1\star\psi)(bn)}{n^{1+u+v}}dudv
\end{align*}
with $c=1/\log q$. By multiplicativity,
\[
\sum_{n\geq 1} \frac{(1\star\psi)(an)(1\star\psi)(bn)}{n^{s}}=f_a(s)f_b(s)\prod_p\bigg(1+\sum_{j\geq1}\frac{(1\star\psi)(p^j)^2}{p^{js}}\bigg),
\]
where
\[
f_a(s)=\prod_{p^{a_p}||a}\bigg((1\star\psi)(p^{a_p})+\sum_{j\geq1}\frac{(1\star\psi)(p^{a_p+j})(1\star\psi)(p^j)}{p^{js}}\bigg)\bigg(1+\sum_{j\geq1}\frac{(1\star\psi)(p^j)^2}{p^{js}}\bigg)^{-1}.
\]
Also,
\begin{align*}
\prod_p\bigg(1+\sum_{j\geq1}\frac{(1\star\psi)(p^j)^2}{p^{js}}\bigg)&=\prod_{p|D}\bigg(1-\frac{1}{p^{s}}\bigg)^{-1}\prod_{\psi(p)=-1}\bigg(1-\frac{1}{p^{2s}}\bigg)^{-1}\prod_{\psi(p)=1}\bigg(1+\frac{1}{p^{s}}\bigg)\bigg(1-\frac{1}{p^{s}}\bigg)^{-3}\\
&=H_D(s)\prod_{\psi(p)=1}\bigg(1-\frac{1}{p^{s}}\bigg)^{-4}=H_D(s)\sum_{\substack{n\geq1\\p|n\rightarrow \psi(p)=1}}\frac{\tau_4(n)}{n^s},
\end{align*}
say, where
\[
H_D(s)=\prod_{p|D}\bigg(1-\frac{1}{p^{s}}\bigg)^{-1}\prod_{\psi(p)=-1}\bigg(1-\frac{1}{p^{2s}}\bigg)^{-1}\prod_{\psi(p)=1}\bigg(1-\frac{1}{p^{2s}}\bigg).
\]
So
\begin{align*}
&\sum_{n\geq 1} \frac{(1\star\psi)(an)(1\star\psi)(bn)}{n}V\Big(\frac{an}{Q}\Big)V\Big(\frac{bn}{Q}\Big)=\frac{1}{(2\pi i)^2}\int_{(c)}\int_{(c)}\widetilde{V}(u)\widetilde{V}(v)\Big(\frac{Q}{a}\Big)^u\Big(\frac{Q}{b}\Big)^{v}\\
&\qquad\qquad f_a(1+u+v)f_b(1+u+v)H_D(1+u+v)\sum_{\substack{n\geq1\\p|n\rightarrow \psi(p)=1}}\frac{\tau_4(n)}{n^{1+u+v}}dudv,
\end{align*}
and hence
\begin{align}\label{550}
I^D& =\frac{\phi(q)}{2}\frac{1}{(2\pi i)^2}\int_{(c)}\int_{(c)}\widetilde{V}(u)\widetilde{V}(v)Q^{u+v}J(u,v)dudv-\frac{\phi(q)}{2}V\Big(\frac{1}{Q}\Big)^2+O_\eps\big(q^{1+\eps}D^{-1}\big),
\end{align}
where
\begin{align}\label{fID}
J(u,v)& =H_D(1+u+v)\sum_{\substack{n\geq1\\p|n\rightarrow \psi(p)=1}}\frac{\tau_4(n)}{n^{1+u+v}}\sum_{h \leq X} \frac{1}{h}\sum_{\substack{a,b \leq X/h \\ (a,b)=1}} \frac{\rho(ha) \rho(hb)f_a(1+u+v)f_b(1+u+v)}{a^{1+u}b^{1+v}}.
\end{align}
Note that the function $f_n(s)$ is multiplicative with respect to $n$ and
\begin{align*}
f_{p^j}(s) &= 
\begin{cases}
1   &\text{if }p|D, \\
0 &\text{if }\psi(p) = -1, \ j \text{ odd}, \\
1 &\text{if }\psi(p) = -1, \ j \text{ even}, \\
1 + \frac{j(p^s-1)}{p^s+1} &\text{if }\psi(p) = 1.
\end{cases}
\end{align*}

\subsection{The mollification}

Our general strategy is as follows. For any prime $p$ we have $\psi(p) \in \{-1,0,1\}$. We factor integer variables into pieces where each piece is only divisible by primes $p$ having the same value of $\psi(p)$. The effect of the mollification is then studied. For clarity we perform this factorization piecemeal rather than all at once.

The typical technique to evaluate mollified sums is to work with Mellin transforms and $L$-functions, but our exceptional situation makes this approach difficult. Therefore, we work combinatorially (in ``physical space'') rather than with integrals (in ``frequency space'').

The following lemma will facilitate some of our estimations.

\begin{lemma}\label{lem:divisor twisted by lacunary}
Let $k$ be a positive integer and let $A>4$ be a fixed real number. If $D$ is sufficiently large in terms of $A$ and $k$, then
\begin{align*}
\sum_{D^{4}<n \leq D^A} \frac{\tau(n)^k (1\star \psi)(n)}{n} &\ll_{\varepsilon,A,C,k} L(1,\psi)(\log D)^{2^{k+1}+1+\varepsilon}  + (\log D)^{-C}.
\end{align*}
\end{lemma}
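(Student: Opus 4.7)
The plan is to evaluate the sum directly via Mellin inversion applied to
\begin{align*}
F(s) := \sum_{n \geq 1}\frac{\tau(n)^k(1\star\psi)(n)}{n^s}.
\end{align*}
The first step is an Euler product factorisation: matching the coefficient of $1/p^s$ at primes $p$ with $\psi(p)\in\{-1,0,1\}$, one obtains
\begin{align*}
F(s) = \bigl(\zeta(s)L(s,\psi)\bigr)^{2^k}H(s)H_D(s),
\end{align*}
where $H(s)$ is an absolutely convergent Euler product over $p\nmid D$ (uniformly bounded on $\textup{Re}(s)>1/2$, independently of $D$), and $H_D(s)$ is a finite product over $p\mid D$ with $|H_D(s)|\ll_k(\log D)^{2^k+O(1)}$ (the factor $2^k$ arises from the mismatch between the $1/p^s$ coefficients of $F_p(s)$ and $(\zeta L)^{2^k}_p$ at ramified primes, together with Mertens' estimate $\sum_{p\mid D}p^{-1}\ll\log\log D$). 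Thus $F$ has a pole of order $2^k$ at $s=1$ with residue of size at most $L(1,\psi)^{2^k}(\log D)^{O(1)}$.

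Next, I apply Perron's formula with smooth cutoffs $\phi_A, \phi_4$ approximating the indicators of $[0, D^A]$ and $[0, D^4]$ respectively, to write $S$ (up to negligible smoothing errors) as a Mellin-Barnes integral $\frac{1}{2\pi i}\int_{(2)} F(1+s)(\tilde\phi_A(s) D^{As} - \tilde\phi_4(s) D^{4s})\,ds$. Shifting the contour to $\textup{Re}(s)=-1/2+\eps$ and picking up the pole of $F(1+s)$ at $s=0$, the residue contributes $\ll L(1,\psi)^{2^k}(\log D)^{2^k+O(1)}$. In the exceptional range $L(1,\psi)\leq 1$, one has $L(1,\psi)^{2^k}\leq L(1,\psi)$, giving the main term $\ll L(1,\psi)(\log D)^{2^{k+1}+O(1)}\ll L(1,\psi)(\log D)^{2^{k+1}+1+\eps}$. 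The shifted contour contributes $\ll D^{-A/2+\eps}D^{O(1)}\ll(\log D)^{-C}$ for any $C$, by the convexity bounds $|\zeta(\sigma+it)|,|L(\sigma+it,\psi)|\ll(D(|t|+1))^{(1-\sigma)/2+\eps}$ and rapid decay of $\tilde\phi_A,\tilde\phi_4$. The case $L(1,\psi)>1$ is handled trivially: $|S|\leq\sum_{n\leq D^A}\tau(n)^{k+1}/n\ll(\log D)^{2^{k+1}}\leq L(1,\psi)(\log D)^{2^{k+1}+1+\eps}$.

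The main technical obstacle is establishing the Euler product factorisation. The critical step is at split primes $\psi(p)=1$: the local factor $F_p(s)=\sum_j(j+1)^{k+1}p^{-js}$ has $1/p^s$ coefficient equal to $2^{k+1}$, which via the half-primes Mertens identity $\sum_{\psi(p)=1}p^{-s}=\tfrac12\log\zeta(s)+\tfrac12\log L(s,\psi)+O(1)$ assembles into the global factor $(\zeta L)^{2^k}$. One must then verify that the higher-order corrections at primes with $\psi(p)\neq 0$ (which are $O(1/p^{2s})$) combine to give an absolutely convergent $H(s)$ on $\textup{Re}(s)>1/2$, and control the local factors at ramified primes $p\mid D$ to produce $H_D(s)$ of size at most $(\log D)^{2^k+O(1)}$. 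A secondary subtlety is that for odd $k$ one is working with fractional exponents of $L(s,\psi)$, which would normally require Selberg-Delange style care, but in our application the factor $(\zeta L)^{2^k}$ has integer exponent $2^k$ so classical Perron-Mellin analysis suffices.
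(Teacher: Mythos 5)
Your overall plan (Mellin/Perron applied to the Dirichlet series $F(s)=\sum_n \tau(n)^k(1\star\psi)(n)n^{-s}$, with the factorisation $F(s)=(\zeta(s)L(s,\psi))^{2^k}H(s)H_D(s)$) is reasonable and the Euler product identity you state is correct, but there is a genuine gap in the residue estimate and that gap is exactly where the content of the lemma lives.

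You assert that the residue of $F(1+s)\bigl(\tilde\phi_A(s)D^{As}-\tilde\phi_4(s)D^{4s}\bigr)$ at $s=0$ is $\ll L(1,\psi)^{2^k}(\log D)^{O(1)}$. This is what you would get if you only kept the leading Laurent coefficient of $F(1+s)$ at $s=0$. But $F(1+s)$ has a pole of order $2^k$, so the residue involves Taylor coefficients of $L(1+s,\psi)^{2^k}$ up to order $2^k$, i.e.\ products $L^{(i_1)}(1,\psi)\cdots L^{(i_{2^k})}(1,\psi)$. The trivial bound $L^{(j)}(1,\psi)\ll(\log D)^{j+1}$ is all that is available unconditionally, and in the exceptional regime one cannot bound $L'(1,\psi)$ by $L(1,\psi)$ times logarithms (indeed, $L'(1,\psi)\asymp 1$ is entirely compatible with $L(1,\psi)$ being as small as $D^{-1/2}$; this is essentially the Siegel-zero phenomenon the whole paper exploits). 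In particular the single residue of $F(1+s)\tilde\phi_A(s)D^{As}$ contains a term proportional to $L'(1,\psi)^{2^k}$ with \emph{no} factor of $L(1,\psi)$ at all, so it is not $\ll L(1,\psi)(\log D)^{O(1)}$. To get the stated conclusion you must exploit the lower cutoff at $D^4$: subtracting the two residues, the coefficient of the $j=2^k$ term carries the factor $(A^{0}-4^{0})(\log D)^0=0$, so the problematic $L(1,\psi)$-free term cancels, and every surviving term retains at least one factor of $L(1,\psi)$. Carrying this through with $L^{(j)}(1,\psi)\ll(\log D)^{j+1}$ gives $\ll L(1,\psi)(\log D)^{2^{k+1}-1}$, consistent with the exponent in the statement, but none of this is in your write-up; as written the key estimate is simply false. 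You also wave away the smoothing errors near $D^4$ and $D^A$; these can be handled (Shiu-type short-interval bounds for $\tau^{k+1}$ with cutoff width $(\log D)^{-C'}$), but they need to be addressed.

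For the record, the paper takes a completely different, purely combinatorial route: it reduces to squarefree $n$, discards $n$ with $\omega(n)$ abnormally large, factors $n=ab$ with $P^+(a)\leq D_0=D^{1/(\log\log D)^2}$ and $P^-(b)>D_0$, bounds the $a$-sum trivially by $(\log D)^{2^{k+1}}$, and then uses the lacunarity estimate \eqref{eq: lacunarity} on $b$ (splitting according to whether $\omega(b)\leq\eps\log\log D$). That avoids the Perron/residue bookkeeping entirely, and in particular never has to confront $L'(1,\psi)$ directly; the $(\log D)^{2^{k+1}}$ loss there comes from the trivial $a$-sum rather than from derivatives of $L$. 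Your analytic approach could in principle recover the same bound, but only after the cancellation between the two residues is made explicit, and the resulting argument is no simpler than the paper's.
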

We remark that this bound is worse than expected by a factor of $(\log D)^{2^k+1}$. This is due to our inability to improve upon the trivial bound $L'(1,\psi) \ll (\log D)^2$.
\begin{proof}
Our idea is to factor $n$ into pieces depending on the size of the prime factors of $n$, since the larger prime factors of $n$ contribute comparatively less to $\tau(n)$.

Let $\mathcal{J}$ be the sum we wish to bound. We first make a technical simplification, reducing to the case when $n$ is squarefree. We can factor $n$ as $n = sn'$, where $s$ is squarefull, $n'$ is squarefree, and $(s,n') = 1$. The contribution from $s > D$ is trivially acceptable, so we may assume that $s \leq D$, and therefore $n' > D^{3}$. Writing $n$ for $n'$, we therefore have
\begin{align*}
\mathcal{J} &\ll_{\varepsilon,k} \sum_{D^{3}<n \leq D^A} \frac{\mu^2(n)\tau(n)^k (1\star \psi)(n)}{n}+D^{-1/2+\varepsilon}.
\end{align*}

Let $t = t(k)$ be a large positive constant. By trivial estimation we may suppose that $\omega(n) \leq t \log \log D$, because the contribution from those $n$ with $\omega(n) > t \log \log D$ is trivially $O_{C,k}((\log D)^{-C})$. We then set $D_0 = D^{1/(\log \log D)^2}$, and factor $n = ab$, where $P^+(a) \leq D_0$ and $P^-(b) > D_0$. Since $\omega(n) \leq t \log \log D$ we see that $a \leq D^{o(1)}$, and therefore $b > D^{3-\varepsilon}$. It follows that
\begin{align*}
\mathcal{J} &\ll_{C,k}  \sum_{\substack{a \leq D^A \\ P^+(a) \leq D_0}} \frac{\mu^2(a)\tau(a)^k (1\star \psi)(a)}{a}\sum_{\substack{D^{3-\varepsilon}<b \leq D^A \\ P^-(b) > D_0}} \frac{\mu^2(b)\tau(b)^k (1\star \psi)(b)}{b}+(\log D)^{-C}.
\end{align*}

For $a$ with $P^+(a) \leq D_0$ we are unable to take advantage of the potential lacunarity of $(1\star\psi)(a)$, and therefore we use the trivial bound
\begin{align*}
 \sum_{\substack{a \leq D^A \\ P^+(a) \leq D_0}} \frac{\mu^2(a)\tau(a)^k (1\star \psi)(a)}{a} \leq \sum_{a \leq D^A} \frac{\tau(a)^{k+1}}{a} \ll_{A,k} (\log D)^{2^{k+1}}.
\end{align*}
Now, the typical integer $b$ has $\approx \log \log D^A - \log \log D_0 \approx 2\log \log \log D$ prime factors, and it is rare to have many more prime factors than this. If $\omega(b) \leq \varepsilon \log \log D$, then $\tau(b) \leq (\log D)^{\varepsilon}$, and the contribution from these integers is
\begin{align*}
\ll_{\varepsilon,A,k} L(1,\psi)(\log D)^{2^{k+1}+1+\varepsilon} 
\end{align*}
by \eqref{eq: lacunarity}. On the other hand, the contribution from those with $\omega(b) > \varepsilon \log \log D$ is
\begin{align*}
&\ll_{A,k} (\log D)^{2^{k+1}}\sum_{m > \varepsilon\log\log D}\sum_{\substack{D^{3-\varepsilon} < b \leq D^A \\ P^-(b) > D_0 \\ \omega(b) =m}} \frac{\mu^2(b)\tau(b)^{k+1}}{b}\\
&\leq (\log D)^{2^{k+1}}\sum_{m > \varepsilon\log\log D} \frac{2^{(k+1)m}}{m!} \bigg(\sum_{D_0 < p \leq D^A} \frac{1}{p} \bigg)^m\\
& \ll_{\varepsilon,A,k} (\log D)^{2^{k+1}}\sum_{m > \varepsilon\log\log D} \frac{(2^{k+2}\log\log\log D)^m}{m !}  \\
&\ll_{\varepsilon,A,k} (\log D)^{2^{k+1}}\Big(\frac{2^{k+4}\log\log\log D}{\varepsilon\log\log D} \Big)^{\varepsilon \log \log D} \ll_{\varepsilon,A,C,k} (\log D)^{-C},
\end{align*}
and the result follows.
\end{proof}

We now consider
\[
\sum_{h \leq X} \frac{1}{h}\sum_{\substack{a,b \leq X/h \\ (a,b)=1}} \frac{\rho(ha) \rho(hb)f_a(1+u+v)f_b(1+u+v)}{a^{1+u}b^{1+v}}.
\]
We begin with those primes such that $\psi(p) = 0$, i.e. $p|D$.  We perform the change of variables $h \rightarrow dh, a \rightarrow ea, b \rightarrow gb$, where $d,e$ and $g$ divide $D^\infty$, and $h,a$ and $b$ are now coprime with $D$. We then obtain
\begin{align*}
&\sum_{\substack{d,e,g \mid D^\infty \\ (e,g)=1}} \frac{\rho(de)\rho(dg)}{de^{1+u}g^{1+v}} \sum_{\substack{h\leq X/d \\ (h,D)=1}} \frac{1}{h} \sum_{\substack{a \leq X/deh \\ b \leq X/dgh \\ (a,b)=1 \\ (ab,D)=1}} \frac{\rho(ha)\rho(hb)f_a(1+u+v)f_b(1+u+v)}{a^{1+u}b^{1+v}},
\end{align*}
since $f_e(1+u+v) =f_g(1+u+v) = 1$ for $e,g|D^\infty$. 

Using the trivial bounds $|\rho(mn)| \leq \tau(mn) \leq \tau(m) \tau(n)$ and $|f_n(1+u+v)| \leq \tau(n)$, we see that the contribution from $e > D$ or $g > D$ is
\begin{align*}
&\ll (\log q)^{12} \prod_{p \mid D} \left(1 + \frac{1}{p}\right)^6\sum_{\substack{e \mid D^\infty \\ e > D}} \frac{\tau(e)}{e} \ll D^{-1/2}(\log q)^{12} \prod_{p \mid D} \left(1 + \frac{1}{p}\right)^6\sum_{e \mid D^\infty} \frac{\tau(e)}{\sqrt{e}} \\
&\ll_\varepsilon D^{-1/2+\varepsilon}(\log q)^{12},
\end{align*}
say, by the divisor bound. Since $D \gg q^\varepsilon$, this bound is acceptably small. The contribution from $d > D$ is similarly bounded by the same error term.

We next use lacunarity to reduce the range of summation of $h$, with an eye towards making the summation ranges of each variable independent of all the others. By the Cauchy-Schwarz inequality and symmetry the contribution from $h >\sqrt{X}$ is
\begin{align*}
\ll_\eps (\log q)^{2+\varepsilon} \sum_{a \leq X} \frac{\tau(a)}{a} \sum_{\sqrt{X} < h \leq X} \frac{\rho(ha)^2}{h}.
\end{align*}
We factor $h \rightarrow wh$, where $w| a^\infty$ and $(h,a) = 1$. The contribution from $w > D$ is $O_\eps(q^{-\varepsilon})$, and the contribution from $w \leq D$ is
\begin{align*}
\ll_\eps (\log q)^{10+\varepsilon} \sum_{\sqrt{X}/ D < h \leq X} \frac{\rho(h)^2}{h} \ll_{\eps,C}L(1,\psi) (\log q)^{15 + \varepsilon}  + (\log q)^{-C}
\end{align*}
by the inequality $|\rho(h)| \leq (1\star\psi)(h)$ and Lemma \ref{lem:divisor twisted by lacunary}.

With $d,e,g$ and $h$ small, we now use lacunarity to make the summation ranges of $a$ and $b$ independent of $d,e,g$ and $h$, so that the sum factors. For instance, the error introduced in replacing the condition $a \leq X/deh$ by $a \leq X$ is bounded by
\begin{align}\label{eqn:use lacunarity to change summation condition for a}
\ll_\eps (\log q)^{8+\varepsilon}\sum_{\substack{\sqrt{X}/D^2 < a \leq X \\ (a,D)=1}} \frac{\tau(a)|f_a(1+u+v)|}{a}.
\end{align}
We factor $a = a_+ a_-$, where $p| a_{\pm}$ implies $\psi(p) = \pm 1$. Since $a_+ a_- > \sqrt{X}/D^2$, one of $a_+$ or $a_-$ is larger than $X^{1/4}/D=D^4$. The quantity in \eqref{eqn:use lacunarity to change summation condition for a} is therefore
\begin{align*}
&\ll_\eps (\log q)^{8+\varepsilon}\sum_{a_+ \leq X} \frac{\tau(a_+) |f_{a_+}(1+u+v)|}{a_+} \sum_{X^{1/4}/D < a_- \leq X} \frac{\tau(a_-) |f_{a_-}(1+u+v)|}{a_-} \\ 
&\qquad\qquad+  (\log q)^{8+\varepsilon} \sum_{a_- \leq X} \frac{\tau(a_-)  |f_{a_-}(1+u+v)|}{a_-}\sum_{X^{1/4}/D < a_+ \leq X} \frac{\tau(a_+) |f_{a_+}(1+u+v)|}{a_+}.
\end{align*}
Since $ f_{a_-}(1+u+v) = \mathbf{1}(a_- = \square)$, this is
\begin{align*}
&\ll_\eps (\log q)^{12+\varepsilon} \sqrt{D} X^{-1/8}+ (\log q)^{8+\varepsilon}\sum_{X^{1/4}/D < a_+ \leq X} \frac{\tau(a_+) |f_{a_+}(1+u+v)|}{a_+}.
\end{align*}
We have $|f_{a_+}(1+u+v)| \leq (1\star\psi)(a_+)$, so the quantity in \eqref{eqn:use lacunarity to change summation condition for a} is
\begin{align*}
\ll_{\eps,C} L(1,\psi)(\log q)^{13+\varepsilon} + (\log q)^{-C}
\end{align*}
by Lemma \ref{lem:divisor twisted by lacunary}. We obtain an identical error term when we change the range of summation of $b$.

Having made the summation conditions independent of one another, we may remove the conditions $d,e,g \leq D$ by the same means we imposed them, and use lacunarity to replace the condition $h \leq \sqrt{X}$ by $h \leq X$. Therefore, up to an error of size $O_{\eps,C}(L(1,\psi)(\log q)^{15+\varepsilon} + (\log q)^{-C})$ we have
\begin{align}\label{eq:main term after deg sum factors}
&\sum_{h \leq X} \frac{1}{h}\sum_{\substack{a,b \leq X/h \\ (a,b)=1}} \frac{\rho(ha) \rho(hb)f_a(1+u+v)f_b(1+u+v)}{a^{1+u}b^{1+v}}\\
&\qquad=\bigg(\sum_{\substack{d,e,g| D^\infty \\ (e,g)=1}} \frac{\rho(de)\rho(dg)}{de^{1+u}g^{1+v}} \bigg) \bigg(\sum_{\substack{h\leq X \\ (h,D)=1}} \frac{1}{h}\sum_{\substack{a,b \leq X \\  (a,b)=1 \\ (ab,D)=1}} \frac{\rho(ha)\rho(hb)f_a(1+u+v)f_b(1+u+v)}{a^{1+u}b^{1+v}} \bigg).\nonumber
\end{align}

We now analyze the factor involving $d,e$, and $g$.

\begin{lemma}\label{lem:euler product mollif lemma}
We have
\begin{align*}
\sum_{d \mid D^\infty} \frac{1}{d}\sum_{\substack{e,g \mid D^\infty \\ (e,g)=1}} \frac{\rho(de)\rho(dg)}{e^{1+u}g^{1+v}} &= \prod_{p \mid D} \bigg(1 + \frac{1}{p}-\frac{1}{p^{1+u}}-\frac{1}{p^{1+v}} \bigg).
\end{align*}
\end{lemma}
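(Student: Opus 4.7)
The plan is to exploit multiplicativity to factor the sum into an Euler product over the primes dividing $D$, and then verify the identity prime-by-prime by a short case analysis. Since all three summation variables $d, e, g$ range over divisors of $D^\infty$, and $\rho$ together with the functions $e \mapsto e^{-(1+u)}$, $g \mapsto g^{-(1+v)}$ are multiplicative, writing $d = \prod_{p \mid D} p^{d_p}$, $e = \prod_{p \mid D} p^{e_p}$, $g = \prod_{p \mid D} p^{g_p}$ converts the coprimality condition $(e,g)=1$ into the local constraint $\min(e_p, g_p) = 0$ at every $p \mid D$. Hence the sum factors as
$$\prod_{p \mid D} \left( \sum_{d_p \geq 0} \frac{1}{p^{d_p}} \sum_{\substack{e_p, g_p \geq 0 \\ \min(e_p, g_p) = 0}} \frac{\rho(p^{d_p + e_p})\, \rho(p^{d_p + g_p})}{p^{e_p(1+u)}\, p^{g_p(1+v)}} \right).$$

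Next I would exploit the drastic simplification that occurs at $p \mid D$. Since $\psi(p) = 0$ for such $p$, the definitions $\rho(p) = -(1+\psi(p))$ and $\rho(p^2) = \psi(p)$, together with the fact that $\rho$ is supported on cubefree integers, give $\rho(1)=1$, $\rho(p) = -1$, and $\rho(p^k) = 0$ for every $k \geq 2$. Therefore each surviving term must satisfy $d_p + e_p \leq 1$ and $d_p + g_p \leq 1$, and only four local configurations $(d_p, e_p, g_p)$ contribute, namely $(0,0,0)$, $(0,1,0)$, $(0,0,1)$, and $(1,0,0)$. Their respective contributions are $1$, $-p^{-(1+u)}$, $-p^{-(1+v)}$, and $p^{-1}$.

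Summing these four terms produces the local factor
$$1 + \frac{1}{p} - \frac{1}{p^{1+u}} - \frac{1}{p^{1+v}},$$
and taking the product over $p \mid D$ yields exactly the right-hand side of the lemma.

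There is no real obstacle here; the lemma is a purely formal identity whose content is that, at the ramified primes, $\rho$ essentially reduces to the Möbius function truncated at the first power, so all local sums collapse to finite combinations. The only thing to watch is the bookkeeping of the coprimality condition, which is automatically handled by the local constraint $\min(e_p, g_p) = 0$.
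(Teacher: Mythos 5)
Your proof is correct, and it takes a genuinely more direct route than the paper's. You exploit multiplicativity at the outset: since $d,e,g$ all divide $D^\infty$, the summand is a multiplicative function of the triple $(d,e,g)$ and the coprimality condition $(e,g)=1$ localizes to $\min(e_p,g_p)=0$, so the whole sum factors immediately into an Euler product over $p \mid D$. Then the local computation is a four-term case check using $\rho(p)=-1$ and $\rho(p^j)=0$ for $j\ge2$ at ramified primes. The paper instead first reduces $d\mid D^\infty$ to $d\mid D$ squarefree, then further factors $d \to e_1 g_1 d$ with $e_1\mid e$, $g_1\mid g$, $(d,eg)=1$, evaluates the resulting inner $d$-sum as an Euler product, identifies the $e_1$-sum (and $g_1$-sum) as multiplicative and evaluates it at primes, and finally unwinds the nested $e$ and $g$ sums into an Euler product. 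The net content is the same, but the paper's style mirrors the manipulations it must use later on the $\psi(p)=\pm1$ primes, where the sums are truncated and a clean global Euler-product factorization is not available; there the algebraic bookkeeping approach is forced, and the authors apparently chose to present this ramified-prime lemma in the same idiom. Your version is shorter and more transparent for this particular (purely formal, untruncated) identity.
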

\begin{proof}
We observe that since $\rho(p^j)$ vanishes for $p | D$ and $j \geq 2$, we must have $d,e$ and $g$ squarefree. So
\begin{align*}
\sum_{d \mid D^\infty} \frac{1}{d}\sum_{\substack{e,g \mid D^\infty \\ (e,g)=1}} \frac{\rho(de)\rho(dg)}{e^{1+u}g^{1+v}} = \sum_{\substack{e,g \mid D \\ (e,g)=1}} \frac{\mu^2(e)\mu^2(g)}{e^{1+u}g^{1+v}}\sum_{d \mid D} \frac{\mu^2(d)\rho(de)\rho(dg)}{d}.
\end{align*}
We then factor $d \rightarrow e_1g_1d$, where $e_1$ divides $e$, $g_1$ divides $g$ and $d$ is coprime to $eg$. We therefore have
\begin{align*}
\sum_{\substack{e,g \mid D \\ (e,g)=1}} \frac{\mu^2(e)\mu^2(g)}{e^{1+u}g^{1+v}} \sum_{\substack{e_1 \mid e \\ g_1 \mid g}} \frac{\mu^2(e_1) \mu^2(g_1) \rho(e_1)\rho(ee_1)\rho(g_1)\rho(gg_1)}{e_1g_1} \sum_{\substack{d \mid D \\ (d,eg)=1}} \frac{\mu^2(d)\rho(d)^2}{d}.
\end{align*}
We have that $\rho(d)^2 = 1$ for $d| D$ and $d$ squarefree, so the sum over $d$ is
\begin{align*}
\prod_{p \mid e} \bigg(1 + \frac{1}{p} \bigg)^{-1}\prod_{p \mid g} \left(1 + \frac{1}{p} \right)^{-1}\prod_{p \mid D} \left(1 + \frac{1}{p}\right)= h_1(e)h_1(g)\prod_{p \mid D} \left(1 + \frac{1}{p}\right),
\end{align*}
say. Next, observe that the function
\begin{align*}
\sum_{e_1 \mid e}\frac{\mu^2(e_1)\rho(e_1)\rho(ee_1)}{e_1}
\end{align*}
is multiplicative in $e$. Since $e$, $g$ are squarefree divisors of $D$, it suffices to evaluate it on primes dividing $D$. We have
\begin{align*}
\sum_{e_1 \mid p}\frac{\mu^2(e_1)\rho(e_1)\rho(pe_1)}{e_1} = \rho(p) + \frac{\rho(p)\rho(p^2)}{p} = \mu(p),
\end{align*}
since $\psi(p) = 0$ for $p| D$. The original factor involving $d,e,g$ is therefore
\begin{align*}
\prod_{p \mid D} \left(1 + \frac{1}{p}\right)\sum_{e \mid D} \frac{\mu(e)h_1(e)}{e^{1+u}}\sum_{\substack{g \mid D \\ (g,e)=1}} \frac{\mu(g)h_1(g)}{g^{1+v}}.
\end{align*}
The sum over $g$ is
\begin{align*}
 \prod_{\substack{p \mid D \\ p \nmid e}} \left(1 - \frac{h_1(p)}{p^{1+v}} \right) =  \prod_{p| e} \left(1 - \frac{h_1(p)}{p^{1+v} }\right)^{-1} \prod_{p| D} \left(1 - \frac{h_1(p)}{p^{1+v}} \right).
\end{align*}
As
\begin{align*}
&\prod_{p| D} \left(1 - \frac{h_1(p)}{p^{1+v}} \right)\sum_{e \mid D} \frac{\mu(e)h_1(e)}{e^{1+u}}\prod_{p|e} \left(1 - \frac{h_1(p)}{p^{1+v}} \right)^{-1} = \prod_{p \mid D}\left(1 - \frac{h_1(p)}{p^{1+u}}- \frac{h_1(p)}{p^{1+v}}\right),
\end{align*}
the claim follows.
\end{proof}

By \eqref{fID}, \eqref{eq:main term after deg sum factors} and Lemma \ref{lem:euler product mollif lemma}, we have
\begin{align}\label{101}
J(u,v)& =\mathcal{A}(u,v)K(u,v)+ O_\eps \big(L(1,\psi) (\log q)^{19+\varepsilon}\big) +O_C\big( (\log q)^{-C} \big),
\end{align}
where
\begin{equation}\label{mathcalA}
\mathcal{A}(u,v) =\prod_{p \mid D} \bigg(1 -\frac{1}{p^{1+u+v}} \bigg)^{-1}\bigg(1 + \frac{1}{p}-\frac{1}{p^{1+u}}-\frac{1}{p^{1+v}} \bigg)
\end{equation}
and
\begin{align}\label{102}
K(u,v)&=\prod_{\psi(p)=-1} \bigg(1 -\frac{1}{p^{2(1+u+v)}} \bigg)^{-1}\prod_{\psi(p)=1} \bigg(1 -\frac{1}{p^{2(1+u+v)}} \bigg)\\
&\qquad\qquad \sum_{\substack{n\geq1\\p|n\rightarrow \psi(p)=1}}\frac{\tau_4(n)}{n^{1+u+v}}\sum_{\substack{h\leq X \\ (h,D)=1}} \frac{1}{h}\sum_{\substack{a,b \leq X \\  (a,b)=1 \\ (ab,D)=1}} \frac{\rho(ha)\rho(hb)f_a(1+u+v)f_b(1+u+v)}{a^{1+u}b^{1+v}}.\nonumber
\end{align}

We now turn to primes $p$ with $\psi(p) = -1$. We factor $h=h_+h_-$, $a=a_+a_-$ and $b=b_+b_-$, where $p|h_+a_+b_+$ implies that $\psi(p) = 1$, and $p|h_-a_-b_-$ implies that $\psi(p) =- 1$. Using lacunarity to make summation conditions independent of variables as before, we find that up to an error of size $O_{\eps,C}(L(1,\psi)(\log q)^{15+\varepsilon} + (\log q)^{-C})$,
\begin{align}\label{103}
&\sum_{\substack{h\leq X \\ (h,D)=1}} \frac{1}{h}\sum_{\substack{a,b \leq X \\  (a,b)=1 \\ (ab,D)=1}} \frac{\rho(ha)\rho(hb)f_a(1+u+v)f_b(1+u+v)}{a^{1+u}b^{1+v}}\nonumber\\
&\qquad\qquad= \bigg(\sum_{h_-\leq X} \frac{1}{h_-} \sum_{\substack{a_-,b_- \leq X \\  (a_-,b_-)=1}} \frac{\rho(h_-a_-)\rho(h_-b_-)f_{a_-}(1+u+v)f_{b_-}(1+u+v)}{a_-^{1+u}b_-^{1+v}} \bigg) \\ 
&\qquad\qquad\qquad\qquad\bigg(\sum_{h_+\leq X} \frac{1}{h_+} \sum_{\substack{a_+,b_+ \leq X \\  (a_+,b_+)=1}} \frac{\rho(h_+a_+)\rho(h_+b_+)f_{a_+}(1+u+v)f_{b_+}(1+u+v)}{a_+^{1+u}b_+^{1+v}} \bigg).\nonumber
\end{align}
Recalling that $f_{a_-}(1+u+v) = \mathbf{1}(a_- = \square)$, we have
\begin{align*}
&\sum_{h_-\leq X} \frac{1}{h_-} \sum_{\substack{a_-,b_- \leq X \\  (a_-,b_-)=1}} \frac{\rho(h_-a_-)\rho(h_-b_-)f_{a_-}(1+u+v)f_{b_-}(1+u+v)}{a_-^{1+u}b_-^{1+v}}\\
&\qquad\qquad= \sum_{h_-\leq X} \frac{1}{h_-}\sum_{(a_-,b_-)=1} \frac{\rho(h_-a_-^2)\rho(h_-b_-^2)}{a_-^{2(1+u)}b_-^{2(1+v)}} + O_\eps(q^{-\varepsilon}).
\end{align*}
Since $\rho$ is supported on cube-free integers, we see that $(h_-,a_-b_-) = 1$. So the above expression is
\begin{align*}
\sum_{h_-} \frac{\mu^2(h_-)}{h_-^2} \sum_{\substack{(a_-,b_-)=1 \\ (a_-b_-,h_-)=1}} \frac{\mu(a_-) \mu(b_-)}{a_-^{2(1+u)}b_-^{2(1+v)}} + O_\eps(q^{-\varepsilon}),
\end{align*}
since $\rho(p) = 0$ for $\psi(p) = -1$, and $\rho(p^2) = \psi(p) = -1 = \mu(p)$.

We rearrange the order of summation to have the sum on $h_-$ as the innermost sum. Using the Euler products, we find
\begin{align}\label{defh2}
\sum_{(h_-,a_-b_-)=1} \frac{\mu^2(h_-)}{h_-^2} &= \prod_{p \mid a_-b_-}\bigg(1 + \frac{1}{p^2} \bigg)^{-1} \prod_{\psi(p) = -1} \bigg(1 + \frac{1}{p^2} \bigg)= h_2(a_-)h_2(b_-) \prod_{\psi(p) = -1} \left(1 + \frac{1}{p^2} \right),
\end{align}
say (we have used the fact that $a_-$ and $b_-$ are coprime). Now
\begin{align*}
\sum_{(b_-,a_-)=1} \frac{\mu(b_-)h_2(b_-)}{b_-^{2(1+v)}} = \prod_{p|a_-} \bigg(1-\frac{1}{p^{2v}(p^2+1)}\bigg)^{-1} \prod_{\psi(p) = -1} \bigg(1-\frac{1}{p^{2v}(p^2+1)}\bigg),
\end{align*}
and the sum over $a_-$ becomes
\begin{align*}
\prod_{\psi(p) = -1} \left(1 + \frac{1}{p^2}-\frac{1}{p^{2(1+u)}}-\frac{1}{p^{2(1+v)}} \right).
\end{align*}
Hence it follows from \eqref{101}, \eqref{102} and \eqref{103} that
\begin{align}\label{502}
J(u,v) = \mathcal{A}(u,v)\mathcal{B}(u,v)L(u,v)+ O_\eps \big(L(1,\psi) (\log q)^{19+\varepsilon}\big) +O_C\big( (\log q)^{-C} \big),
\end{align}
where
\begin{equation}\label{mathcalB}
\mathcal{B}(u,v) =\prod_{\psi(p) = -1} \bigg(1 -\frac{1}{p^{2(1+u+v)}} \bigg)^{-1}\left(1 + \frac{1}{p^2}-\frac{1}{p^{2(1+u)}}-\frac{1}{p^{2(1+v)}} \right)
\end{equation}
and
\begin{align*}
&L(u,v)=\prod_{\psi(p) =1} \bigg(1 -\frac{1}{p^{2(1+u+v)}} \bigg)\nonumber \\
&\qquad\qquad \sum_{\substack{n\geq1\\p|n\rightarrow \psi(p)=1}}\frac{\tau_4(n)}{n^{1+u+v}}\sum_{h_+\leq X} \frac{1}{h_+} \sum_{\substack{a_+,b_+ \leq X \\  (a_+,b_+)=1}} \frac{\rho(h_+a_+)\rho(h_+b_+)f_{a_+}(1+u+v)f_{b_+}(1+u+v)}{a_+^{1+u}b_+^{1+v}}.
\end{align*}

For the rest of the subsection we assume that all integer variables are supported on integers comprised only of primes with $\psi(p) = 1$, and we do not indicate this in the notation anymore. It is helpful to remember the inequality
\begin{align*}
\mathbf{1}(p| n \Rightarrow \psi(p) = 1) \leq (1\star\psi)(n).
\end{align*}

Since $\rho$ is supported on cube-free numbers, we see that $h$ is cube-free. We factor $h = \alpha\beta^2$, where $\alpha,\beta$ are squarefree and coprime. Thus the sum over $a,b,h$ becomes
\begin{align*}
\sum_{\substack{\alpha\beta^2 \leq X \\ (\alpha,\beta)=1}} \frac{\mu^2(\alpha)\mu^2(\beta)}{\alpha\beta^2}  \sum_{\substack{a,b \leq X \\ (a,b)=1}} \frac{\rho(\alpha\beta^2 a) \rho(\alpha\beta^2 b)f_a(1+u+v)f_b(1+u+v)}{a^{1+u}b^{1+v}}.
\end{align*}
The support of $\rho$ implies we may take $\beta$ to be coprime to $a$ and $b$, so our sum becomes
\begin{align*}
\sum_{\alpha \leq X} \frac{\mu^2(\alpha)}{\alpha} \sum_{\substack{a,b \leq X \\ (a,b)=1}} \frac{\rho(\alpha a)\rho(\alpha b)f_a(1+u+v)f_b(1+u+v)}{a^{1+u}b^{1+v}} \sum_{\substack{\beta \leq \sqrt{X/\alpha} \\ (\beta,\alpha ab)=1}} \frac{\mu^2(\beta) \rho(\beta^2)^2}{\beta^2}.
\end{align*}
Observe that $\rho(\beta^2)^2 = 1$ for squarefree $\beta$. We now factor $a \rightarrow da, b \rightarrow eb$, where $d,e| \alpha^\infty$ and $a,b$ are coprime to $\alpha$. We then use trivial estimates and Lemma \ref{lem:divisor twisted by lacunary} to make summations independent from one another. Up to an error of size $O_{\eps,C}(L(1,\psi)(\log q)^{13+\varepsilon} + (\log q)^{-C})$, we have
\begin{align*}
&\sum_{\alpha \leq X} \frac{\mu^2(\alpha)}{\alpha} \sum_{\substack{d,e \mid \alpha^\infty \\ (d,e)=1}}\frac{\rho(\alpha d) \rho(\alpha e) f_d(1+u+v)f_e(1+u+v)}{d^{1+u}e^{1+v}} \\ 
&\qquad\qquad \sum_{\substack{a,b \leq X \\ (a,b)=1 \\ (ab,\alpha)=1}} \frac{\rho(a)\rho(b)f_a(1+u+v)f_b(1+u+v)}{a^{1+u}b^{1+v}} \sum_{\substack{\beta \leq \sqrt{X/\alpha} \\ (\beta,\alpha ab)=1}} \frac{\mu^2(\beta)}{\beta^2}.
\end{align*}

By the support of $\rho$, we may take $d$ and $e$ to be squarefree. Also, we see that the function
\begin{align*}
h_3 (\alpha;u,v) := \sum_{\substack{d,e \mid \alpha \\ (d,e)=1}}\frac{\mu^2(d) \mu^2(e)\rho(\alpha d) \rho(\alpha e) f_d(1+u+v)f_e(1+u+v)}{d^{1+u}e^{1+v}}
\end{align*}
is multiplicative in $\alpha$. Since $\alpha$ is squarefree, it suffices to evaluate $h_3$ on primes, and we find
\begin{align*}
h_3(p;u,v) &= \rho(p)^2 + \frac{\rho(p)\rho(p^2)f_p(1+u+v)}{p^{1+u}} + \frac{\rho(p)\rho(p^2)f_p(1+u+v)}{p^{1+v}}\\
&=4-\frac{4}{p}\bigg(1+\frac{1}{p^{1+u+v}}\bigg)^{-1}\bigg(\frac{1}{p^u}+\frac{1}{p^v}\bigg).
\end{align*}
We therefore have
\begin{align*}
&\sum_{\alpha \leq X} \frac{\mu^2(\alpha)h_3(\alpha;u,v)}{\alpha}\sum_{\substack{a,b \leq X \\ (a,b)=1 \\ (ab,\alpha)=1}} \frac{\rho(a)\rho(b)f_a(1+u+v)f_b(1+u+v)}{a^{1+u}b^{1+v}} \sum_{(\beta,\alpha ab)=1} \frac{\mu^2(\beta)}{\beta^2}\\
&\qquad\qquad + O_\eps\big(L(1,\psi)(\log q)^{17+\varepsilon}\big)+O_C\big((\log q)^{-C}\big),
\end{align*}
where we have used Lemma \ref{lem:divisor twisted by lacunary} to reduce the range of $\alpha$ to $a \leq X^{1/5}$, say, then extended the sum on $\beta$ to infinity, and then increased the range of $\alpha$ by Lemma \ref{lem:divisor twisted by lacunary} again. As
\begin{align*}
\sum_{(\beta,\alpha ab)=1} \frac{\mu^2(\beta)}{\beta^2} = \prod_{p \nmid \alpha ab} \bigg(1 + \frac{1}{p^2}\bigg) = h_2(\alpha) h_2(a) h_2(b) \prod_p \left(1 + \frac{1}{p^2}\right),
\end{align*}
where $h_2$ is the multiplicative function which was defined in \eqref{defh2}, we obtain
\begin{align*}
&\prod_p \left(1 + \frac{1}{p^2}\right)\sum_{\alpha \leq X} \frac{\mu^2(\alpha)h_2(\alpha)h_3(\alpha;u,v)}{\alpha}\sum_{\substack{a,b \leq X \\ (a,b)=1 \\ (ab,\alpha)=1}} \frac{\rho(a)\rho(b)f_a(1+u+v)f_b(1+u+v)h_2(a) h_2(b)}{a^{1+u}b^{1+v}}\\
&\qquad\qquad + O_\eps\big(L(1,\psi)(\log q)^{17+\varepsilon}\big)+O_C\big((\log q)^{-C}\big).
\end{align*}

We use Lemma \ref{lem:divisor twisted by lacunary} to deduce
\begin{align*}
&\sum_{\substack{a,b\leq X \\ (a,b)=1 \\ (ab,\alpha)=1}}\frac{\rho(a)\rho(b)f_a(1+u+v)f_b(1+u+v)h_2(a) h_2(b)}{a^{1+u}b^{1+v}}\\
&\qquad\qquad= \sum_{\substack{m \leq X \\ (m,\alpha)=1}} \frac{g_1(m;u,v)}{m} + O_\eps\big(L(1,\psi)(\log q)^{17+\varepsilon}\big)+O_C\big((\log q)^{-C}\big),
\end{align*}
where $g_1(m;u,v)$ is the multiplicative function in $m$ given by
\begin{align*}
g_1(p;u,v) &=\frac{\rho(p)f_p(1+u+v)h_2(p)}{p^u}+\frac{\rho(p)f_p(1+u+v)h_2(p)}{p^v}\\
&=-4h_2(p) \bigg(1+\frac{1}{p^{1+u+v}}\bigg)^{-1}\bigg(\frac{1}{p^{u}}+\frac{1}{p^{v}}\bigg), \\
g_1(p^2;u,v) &=\frac{\rho(p^2)f_{p^2}(1+u+v)h_2(p^2)}{p^{2u}}+\frac{\rho(p^2)f_{p^2}(1+u+v)h_2(p^2)}{p^{2v}}\\
&=h_2(p) \bigg(1+\frac{1}{p^{1+u+v}}\bigg)^{-1}\bigg(3-\frac{1}{p^{1+u+v}}\bigg)\bigg(\frac{1}{p^{2u}}+\frac{1}{p^{2v}}\bigg), \\
g_1(p^j;u,v) &= 0 \ \ \ \ \ \ \ \ \ \text{if }j \geq 3.
\end{align*}
So the sum over $a,b,h$ is
\begin{align*}
&\prod_p \left(1 + \frac{1}{p^2}\right)\sum_{\substack{\alpha,m \leq X \\ (\alpha,m)=1}} \frac{\mu^2(\alpha)h_2(\alpha)h_3(\alpha;u,v)g_1(m;u,v)}{\alpha m}\\
&\qquad\qquad+ O_\eps \big(L(1,\psi)(\log q)^{21+\varepsilon}\big) +O_C\big( q (\log q)^{-C} \big).
\end{align*}

Applying Lemma \ref{lem:divisor twisted by lacunary} again we obtain
\begin{align*}
&\sum_{\substack{\alpha,m \leq X \\ (\alpha,m)=1}} \frac{\mu^2(\alpha)h_2(\alpha)h_3(\alpha;u,v)g_1(m;u,v)}{\alpha m}\\
&\qquad\qquad= \sum_{m \leq X} \frac{g_2(m;u,v)}{m}+ O_\eps\big(L(1,\psi)(\log q)^{21+\varepsilon}\big)+O_C\big((\log q)^{-C}\big),
\end{align*}
where $g_2(m;u,v)$ is the multiplicative function in $m$ given by
\begin{align*}
g_2(p;u,v) &= h_2(p)h_3(p;u,v) + g_1(p) =4h_2(p)-4h_2(p) \bigg(1+\frac{1}{p^{1+u+v}}\bigg)^{-1}\bigg(1+\frac1p\bigg)\bigg(\frac{1}{p^{u}}+\frac{1}{p^{v}}\bigg), \\
g_2(p^2;u,v) &= g_1(p^2) =h_2(p) \bigg(1+\frac{1}{p^{1+u+v}}\bigg)^{-1}\bigg(3-\frac{1}{p^{1+u+v}}\bigg)\bigg(\frac{1}{p^{2u}}+\frac{1}{p^{2v}}\bigg), \\
g_2(p^j;u,v) &= 0 \ \ \ \ \ \ \ \ \ \ \ \text{if }j \geq 3.
\end{align*}
Hence
\begin{align*}
L(u,v)&=\prod_{p} \bigg(1 -\frac{1}{p^{2(1+u+v)}} \bigg)\left(1 + \frac{1}{p^2}\right) \sum_{\substack{m \leq X\\ n\geq1}} \frac{g_2(m;u,v)\tau_4(n)}{mn^{1+u+v}}\\
&\qquad\qquad+ O_\eps \big(L(1,\psi)(\log q)^{25+\varepsilon}\big) +O_C\big( q (\log q)^{-C} \big),
\end{align*}

We wish to truncate the sum over $n$ to $n \leq X$ in order to facilitate moving the contours of integration to the left. We perform this truncation in two steps. First, we see that the contribution from $n > Y := \exp((\log D)^3)$ is
\begin{align*}
\left|\sum_{n > Y}\frac{\tau_4(n)}{n^{1+u+v}}\right| \ll Y^{-c} \sum_{n\geq1} \frac{\tau_4(n)}{n^{1+c}} = Y^{-c} \zeta(1+c)^4 \ll Y^{-1/\log q} (\log q)^4 \ll_C D^{-C}.
\end{align*}
Recalling that $n$ is supported on primes with $\psi(p) = 1$, we have that the contribution from $X < n \leq Y$ is
\begin{align*}
&\ll \sum_{X < n \leq Y} \frac{\tau_4(n)}{n} \leq \sum_{X < ab \leq Y} \frac{(1\star \psi)(a)(1\star\psi)(b)}{ab} \\ 
&\ll \sum_{a \leq Y}\frac{(1\star\psi)(a)}{a}\sum_{\sqrt{X} < b \leq Y} \frac{(1\star \psi)(b)}{b} \ll L(1,\psi) (\log D)^9,
\end{align*}
the last inequality following by trivial estimation and \eqref{eq: lacunarity}. By another application of Lemma \ref{lem:divisor twisted by lacunary} we then find that
\begin{align}\label{553}
L(u,v)&=\mathcal{C}(u,v)+ O_\eps \big(L(1,\psi)q (\log q)^{25+\varepsilon}\big) +O_C\big( q (\log q)^{-C} \big),
\end{align}
where
\begin{align}\label{mathcalC}
\mathcal{C}(u,v)&=\prod_{\psi(p)=1} \bigg(1 -\frac{1}{p^{2(1+u+v)}} \bigg)\left(1 + \frac{1}{p^2}\right) \sum_{\substack{n \leq X\\p|n\rightarrow \psi(p)=1}} \frac{g_3(n;u,v)}{n}
\end{align}
and $g_3(n;u,v)$ is the multiplicative function in $n$ given by
\begin{align*}
g_3(p;u,v) &= g_2(p;u,v)+\frac{4}{p^{u+v}}, \\
g_3(p^j;u,v) &= \frac{\tau_4(p^j)}{p^{j(u+v)}}+g_2(p;u,v)\frac{\tau_4(p^{j-1})}{p^{(j-1)(u+v)}}+g_2(p^2;u,v)\frac{\tau_4(p^{j-2})}{p^{(j-2)(u+v)}} \ \ \ \ \ \ \ \ \ \ \ \text{if }j \geq 2.
\end{align*}
Thus, from \eqref{550}, \eqref{502} and \eqref{553},
\begin{align}\label{554}
I^D& =\frac{\phi(q)}{2}\frac{1}{(2\pi i)^2}\int_{(c)}\int_{(c)}\widetilde{V}(u)\widetilde{V}(v)Q^{u+v}\mathcal{A}(u,v)\mathcal{B}(u,v)\mathcal{C}(u,v)dudvdw\\
&\qquad\qquad -\frac{\phi(q)}{2}V\Big(\frac{1}{Q}\Big)^2+ O_\eps \big(L(1,\psi)q (\log q)^{25+\varepsilon}\big) +O_C\big( q (\log q)^{-C} \big),\nonumber
\end{align}
where $\mathcal{A}(u,v)$, $\mathcal{B}(u,v)$ and $\mathcal{C}(u,v)$ are given in \eqref{mathcalA}, \eqref{mathcalB} and \eqref{mathcalC}, respectively.

Observe that
\[
\mathcal{A}(u,0)=\mathcal{A}(0,v)=\mathcal{B}(u,0)=\mathcal{B}(0,v)=1.
\]Also, note that $\mathcal{B}(u,v)$ and $\mathcal{C}(u,v)$ are absolutely convergent provided that
\[
\text{Re}(u),\text{Re}(v)>-1/4+\eps,
\]
say. Uniformly in this region we have
\[
\mathcal{A}(u,v),\,\partial_v\mathcal{A}(u,0)\ll_\eps D^\eps,\qquad\partial_v\mathcal{A}(0,0),\,\partial^2_{uv}\mathcal{A}(0,0)\ll_\eps (\log D)^{\eps},
\]
\[
\mathcal{B}(u,v),\,\partial_v\mathcal{B}(u,v),\,\partial^2_{uv}\mathcal{B}(u,v)\ll_\eps 1
\]
and, as $g_3(n;u,v)\ll_\eps n^{-\text{Re}(u+v)+\eps}$,
\[
\mathcal{C}(u,v),\,\partial_v\mathcal{C}(u,v),\,\partial^2_{uv}\mathcal{C}(u,v)\ll_\eps X^{-\text{Re}(u+v)+\eps}.
\]

\subsection{Wrapping up}

Consider the double integrals in \eqref{554}. 
We first move the $v$-contour to  $\text{Re}(v) = -1/4+\varepsilon$, crossing a double pole at $v=0$ with residue
\begin{align*}
R_1&=\frac{v_1}{2\pi i}\int_{(c)}\widetilde{V}(u)Q^u\mathcal{C}(u,0)du\\
&\qquad\qquad+\frac{v_2}{2\pi i(\log Q)}\int_{(c)}\widetilde{V}(u)Q^u\Big(\mathcal{C}(u,0)\big(\log Q+\partial_v\mathcal{A}(u,0)+\partial_v\mathcal{B}(u,0)\big)+\partial_v\mathcal{C}(u,0)\Big)du.
\end{align*}
For the new integral, we move the $u$-contour to  $\text{Re}(u) = -1/4+\eps$, encountering a double pole at $u=0$. The residue is 
\begin{align*}
R_2&=\frac{v_1}{2\pi i}\int_{(-1/4+\eps)}\widetilde{V}(v)Q^v\mathcal{C}(0,v)dv\\
&\quad\quad+\frac{v_2}{2\pi i(\log Q)}\int_{(-1/4+\eps)}\widetilde{V}(v)Q^v\Big(\mathcal{C}(0,v)\big(\log Q+\partial_u\mathcal{A}(0,v)+\partial_u\mathcal{B}(0,v)\big)+\partial_u\mathcal{C}(0,v)\Big)dv
\end{align*}
and the final integral is bounded trivially by
\[
\ll_\eps Q^{-1/2+\eps}X^{1/2}\ll_\eps q^{-1/2+\eps}D^{-1/4}X^{1/2}.
\]
We can also bound $R_2$ trivially by $O_\eps(Q^{-1/4+\eps}X^{1/4})$. 

Regarding $R_1$, we move the line of integration to $\text{Re}(u) = -1/4+\eps$, crossing a double pole at $u=0$ with residue
\begin{align*}
(v_1+v_2)^2\mathcal{C}(0,0)+\frac{(v_1+v_2)v_2}{\log Q}\big(\partial_u\mathcal{C}(0,0)+\partial_v\mathcal{C}(0,0)\big)+\frac{v_2^2}{(\log Q)^2}\partial^2_{uv}\mathcal{C}(0,0)+O_\eps\big((\log Q)^{-1+\eps}\big),
\end{align*}
and the new integral, like $R_2$, is bounded trivially by $O_\eps(q^{-1/4+\eps}D^{-1/8}X^{1/4})$.

Next consider $\mathcal{C}(0,0)$. It is easy to verify that $g_3(n;0,0)\ll 1/n$, so we can extend the sum over $n$ in \eqref{mathcalC} to infinity and use multiplicativity to obtain
\begin{align*}
\mathcal{C}(0,0)&=\prod_{\psi(p)=1} \bigg(1 -\frac{1}{p^{2}} \bigg)\bigg(1 +\frac{1}{p^{2}} \bigg)\bigg[1 + \frac{g_2(p;0,0)}{p}+\frac4p\\
&\qquad\qquad+\sum_{j\geq2}\bigg( \frac{\tau_4(p^j)}{p^{j}}+g_2(p;0,0)\frac{\tau_4(p^{j-1})}{p^{j}}+g_2(p^2;0,0)\frac{\tau_4(p^{j-2})}{p^{j}}\bigg)\bigg]+O_\varepsilon\big(X^{-1+\varepsilon}\big)\\
&=\prod_{\psi(p)=1} \bigg(1 -\frac{1}{p^{2}} \bigg)h_2(p)^{-1}\bigg(1+\sum_{j\geq1}\frac{\tau_4(p^j)}{p^{j}}\bigg)\bigg(1+\frac{g_2(p;0,0)}{p}+\frac{g_2(p^2;0,0)}{p^2}\bigg)+O_\varepsilon\big(X^{-1+\varepsilon}\big)\\
&=\prod_{\psi(p)=1} \bigg(1 +\frac{1}{p} \bigg)\bigg(1-\frac{1}{p}\bigg)^{-3}\bigg[h_2(p)^{-1}-\frac{4}{p}+\frac{2}{p^2}\bigg(1+\frac{1}{p}\bigg)^{-1}\bigg(3-\frac{1}{p}\bigg)\bigg]+O_\varepsilon\big(X^{-1+\varepsilon}\big)\\
&=\prod_{\psi(p)=1}\bigg(1-\frac{1}{p}\bigg)^{-3}\bigg[\bigg(1-\frac{4}{p}+\frac{1}{p^2}\bigg) \bigg(1 +\frac{1}{p} \bigg)+\frac{2}{p^2}\bigg(3-\frac{1}{p}\bigg)\bigg]+O_\varepsilon\big(X^{-1+\varepsilon}\big)\\
&=1+O_\varepsilon\big(X^{-1+\varepsilon}\big).
\end{align*}
Similarly, one can verify that $\partial_vg_3(n;0,0),\,\partial^2_{uv}g_3(n;0,0)\ll_\eps n^{-1+\eps}$, and standard calculations then imply that 
\[
\partial_u\mathcal{C}(0,0),\,\partial_v\mathcal{C}(0,0),\,\partial^2_{uv}\mathcal{C}(0,0)\ll 1.
\]
Hence the double integrals in \eqref{554} is
\[
(v_1+v_2)^2+O_\eps\big((\log Q)^{-1+\eps}\big)+O_\eps\big(q^{-1/4+\eps}D^{-1/8}X^{1/4}\big).
\]

Since
\[
V\Big(\frac{1}{Q}\Big)=\frac{1}{2\pi i}\int_{(1)}\widetilde{V}(u)Q^udu=v_1+v_2+O_\eps\big(Q^{-1/2+\eps}\big),
\]
it follows from \eqref{554} that
\begin{align}\label{boundID}
I_D\ll_\eps L(1,\psi)q (\log q)^{25+\varepsilon}+q(\log q)^{-1+\eps}+O_\eps\big(q^{3/4+\eps}D^{-1/8}X^{1/4}\big).
\end{align}

\section{Shifted convolution sum}

In this section we focus on the sum
\[
\sum_{\substack{am\equiv \pm bn(\text{mod}\ q)\\am\ne bn}}(1\star\psi)(m)(1\star\psi)(n)
\]
over dyadic intervals. 

\begin{proposition}\label{propoff}
Let $a,b\in\mathbb{N}$ be coprime, $D\nmid a$ and $D\nmid b$, and let $M,N\geq1$. Let $\omega_1$ and $\omega_2$ be smooth functions supported in $[1,2]$ such that $\omega_1^{(j)}, \omega_2^{(j)}\ll_j q^{\eps}$ for any fixed $j\geq0$. Let
\begin{equation*}
\mathcal S_{a,b} =\sum_{\substack{am\equiv \pm bn(\emph{mod}\ q)\\am\ne bn}}(1\star\psi)(m)(1\star\psi)(n)\omega_1\Big(\frac{m}{M}\Big)\omega_2\Big(\frac{n}{N}\Big).
\end{equation*}
Then
\begin{align*}
\mathcal S_{a,b}&=\frac{L(1,\psi)^2}{ab}\sum_{r\ne 0}\mathfrak{S}_{a,b}(r)\int\omega_1\Big(\frac{x}{aM}\Big)\omega_2\Big(\frac{\mp(qr-x)}{bN}\Big) dx\\
&\qquad\qquad+O_\eps\big( (qMN)^\eps q^{-1}D(abMN)^{1/4}(aM+bN)^{5/4}\big) ,
\end{align*}
where
\begin{align}\label{mathfrakS}
\mathfrak{S}_{a,b}(r)&=\sum_{\ell\geq1}\frac{\big(\psi(\ell_a\ell_b)+\mathbf{1}_{D|(\ell_a,\ell_b)}D\psi(a'b')\big)S(r,0;\ell)}{\ell_a\ell_b}.
\end{align}
Here $a ' =a/(a, \ell)$, $\ell_a = \ell/(a, \ell)$, $b ' = b/(b, \ell)$ and $\ell_b = \ell/(b,\ell)$.

\end{proposition}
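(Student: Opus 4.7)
The plan is to separate the variables $m,n$ via the Duke--Friedlander--Iwaniec delta method and then apply the Voronoi summation formula for $(1 \star \psi)$ developed in the Appendix. Writing the congruence $am \equiv \pm bn \pmod q$ together with $am \neq bn$ as $am \mp bn = qr$ for $r \in \mathbb{Z} \setminus \{0\}$, we express
\[
\mathcal{S}_{a,b} = \sum_{r \neq 0} \sum_{m,n} (1\star\psi)(m)(1\star\psi)(n)\, \delta(am \mp bn - qr)\, \omega_1(m/M)\,\omega_2(n/N),
\]
and expand $\delta$ by the DFI formula with a balancing parameter of size $Q_0 \asymp (aM+bN)^{1/2}$. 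After this expansion the inner double sum decouples into a product of twisted sums of the shape $T(\alpha,\omega) = \sum_m (1\star\psi)(m) e(\alpha m)\omega(m)$ evaluated at rational points $\alpha = ad/c$ and $\alpha = \mp bd/c$, against a smooth weight coming from the DFI archimedean factor.

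Next I would apply Voronoi to each of the two factors $T(\alpha,\omega)$. The formula decomposes each twisted sum into (i) a main term extracted from the pole at $s=1$ of the associated Mellin transform, which carries a factor of $L(1,\psi)$ and an archimedean integral transform of $\omega_i$, and (ii) a dual sum over a new variable $h$ with coefficients given by Kloosterman sums modulo divisors of $c$, against a Bessel-type transform of $\omega_i$. Retaining the two main terms from both Voronoi applications, the remaining sum over $c$ and $d$ collapses: the $d$-average becomes a Ramanujan sum $S(r,0;\ell)$, where $\ell$ is the part of $c$ remaining after absorbing the factors shared with $a$ and $b$, and the smooth weight degenerates to the single integral $\int \omega_1(x/(aM))\omega_2(\mp(qr-x)/(bN))\,dx$. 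Assembling the local computation at every prime yields $\mathfrak{S}_{a,b}(r)$ as in \eqref{mathfrakS}; the correction $\mathbf{1}_{D|(\ell_a,\ell_b)}D\psi(a'b')$ appearing alongside $\psi(\ell_a\ell_b)$ arises from the local factor at primes dividing the conductor $D$ of $\psi$ in the Voronoi formula, where the $p$-part of the Gauss sum of $\psi$ intervenes.

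The main obstacle is controlling the remaining off-diagonal contribution where at least one of the two Voronoi applications contributes its dual term. There one is left with a multiple sum over $c$, $d \pmod c$, and dual variables $h, h'$ of products of Kloosterman sums weighted by oscillatory integral transforms of $\omega_1$ and $\omega_2$. I would bound the Kloosterman sums by the Weil estimate $|S(\cdot,\cdot;\ell)| \leq \tau(\ell)\ell^{1/2}$, truncate the dual variables $h,h'$ via repeated integration by parts in the Bessel transforms, and sum trivially over the surviving effective ranges. Balancing the DFI length $Q_0 \asymp (aM+bN)^{1/2}$ against the square-root savings on $c$ from the Weil bound and the conductor factor $D$ emerging from Voronoi produces the stated bound $O_\varepsilon((qMN)^\varepsilon q^{-1} D (abMN)^{1/4}(aM+bN)^{5/4})$; in particular the exponent $5/4$ on $(aM+bN)$ records the DFI length contributing $(aM+bN)^{1/2}$ combined with the $(aM+bN)^{3/4}$ trivially surviving from the Bessel-truncated dual ranges.
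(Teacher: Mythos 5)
Your high-level plan — write the congruence as $am \mp bn = qr$, apply the Duke--Friedlander--Iwaniec delta expansion, then hit each of the $m,n$-sums with the Voronoi formula from the Appendix, keep the two main terms to produce the singular series $\mathfrak{S}_{a,b}(r)$, and bound the dual terms with the Weil bound — is exactly the route the paper takes. Your description of where the $\mathbf{1}_{D|(\ell_a,\ell_b)}D\psi(a'b')$ correction comes from (the two-part Gauss factor $\rho(a,c)$ from Voronoi when $D\mid c$) is also on target.

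However, there is a concrete error in the key quantitative step: your choice of delta-method parameter $Q_0 \asymp (aM+bN)^{1/2}$ is inadmissible. The DFI argument requires the length to satisfy $Z \leq \min\{\sqrt{aM},\sqrt{bN}\}$, because this is exactly what makes the archimedean weight obey $g_{a,b}^{(ij)} \ll (\ell Z)^{-1}(a/\ell Z)^i (b/\ell Z)^j$: one needs $\ell Z \ll aM$ and $\ell Z \ll bN$ so that the derivatives of $\Delta_\ell \cdot \varphi$ dominate those of $\omega_1, \omega_2$. Since $(aM+bN)^{1/2}$ exceeds $\min\{\sqrt{aM},\sqrt{bN}\}$ whenever $aM \neq bN$, your parameter is too large and the integration-by-parts truncation of the dual variables $m',n'$ (on which the whole error estimate hinges) breaks down in the unbalanced regime. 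The correct optimization, as in the paper, takes $Z = \min\{\sqrt{aM},\sqrt{bN}\} \asymp \sqrt{abMN}/\sqrt{aM+bN}$, and the error term is of the form $q^{-1}Z^{-5/2}D(abMN)^{3/2}$, which becomes $q^{-1}D(abMN)^{1/4}(aM+bN)^{5/4}$ after substitution. Your heuristic accounting "$(aM+bN)^{1/2}$ from the DFI length times $(aM+bN)^{3/4}$ from the dual ranges" does not match this; the exponent $5/4$ arises from $Z^{-5/2}$ evaluated at the optimal $Z$, and if you actually plugged in $Z = (aM+bN)^{1/2}$ you would get $(aM+bN)^{-5/4}$, not $(aM+bN)^{+5/4}$.

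Two smaller imprecisions worth flagging: the DFI archimedean factor $\Delta_\ell(am\mp bn -qr)\varphi(\cdots)$ couples $m$ and $n$, so the double sum does not literally factor into two one-variable twisted sums $T(\alpha,\omega)$ before Voronoi is applied — the joint weight must be carried through and dealt with by two-variable integration by parts. And the $k$-sum modulo $\ell$ in the dual terms produces a single (possibly twisted) Kloosterman sum $S_{\psi_{D_{1,a}}\psi_{D_{1,b}}}(qr,\ast;\ell)$, not a product of two Kloosterman sums.
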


\begin{proof}
We write $am\equiv \pm bn(\text{mod}\ q)$ and $am\ne bn$ as $am\mp bn=q r$ with $$0< |r|\leq R=2(aM+bN)q^{-1}.$$ Then
\begin{equation}\label{555}
\mathcal{S}_{a,b}=\sum_{0<|r|\leq R}\sum_{am\mp bn=qr}(1\star\psi)(m)(1\star\psi)(n)\omega_1\Big(\frac{m}{M}\Big)\omega_2\Big(\frac{n}{N}\Big).
\end{equation}

We use the delta method, as developed by Duke, Friedlander and Iwaniec in [\textbf{\ref{DFI}}]. 
As  usual, let
$\delta(0)=1$ and $\delta(n)=0$ for $n\ne 0$. Let $Z\leq \min\{\sqrt{aM},\sqrt{bN}\}$. Then
\begin{equation}\label{deltafnc}
\delta(n) = \sum_{\ell\geq 1}\, \sideset{}{^*}\sum_{k(\text{mod}\ \ell)} e\Big(\frac{kn}{\ell}\Big) \Delta_\ell(n),
\end{equation}
where $\Delta_\ell(u)$ is some smooth function that vanishes if $|u| \leq U=Z^2$ and $\ell \geq 2Z$ (see [\textbf{\ref{DFI}}; Section 4]), and satisfies (see [\textbf{\ref{DFI}}; Lemma 2])
\begin{equation}\label{bdDelta}
\Delta_\ell(u)\ll \big(\ell Z+Z^2\big)^{-1}+\big(\ell Z+|u|\big)^{-1}.
\end{equation}
It is also convenient to attach to both sides of \eqref{deltafnc} a redundant factor $\varphi(n)$, where $\varphi(u)$ is a smooth function supported on $|u|<U$ satisfying $\varphi(0)=1$ and $\varphi^{(j)}(u)\ll U^{-j}$ for any fixed $j\geq0$. Applying this to \eqref{555} we see that
\begin{align}\label{summn}
\mathcal S_{a,b} =\sum_{0<|r|\leq R}\sum_{\ell<2Z}\, \sideset{}{^*}\sum_{k(\text{mod}\ \ell)}e\Big(\frac{-qrk}{\ell}\Big)\sum_{m,n}(1\star\psi)(m)(1\star\psi)(n)e\Big(\frac{k(a m \mp bn)}{\ell}\Big)g_{a,b}(m,n),
\end{align}
where
\[
g_{a,b}(m,n)=\Delta_\ell(am\mp bn-qr)\varphi(am\mp bn-qr)\omega_1\Big(\frac{m}{M}\Big)\omega_2\Big(\frac{n}{N}\Big).
\]

Let $a ' =a/(a, \ell)$, $\ell_a = \ell/(a, \ell)$, $b ' = b/(b, \ell)$ and $\ell_b = \ell/(b,\ell)$.  Further, let $D_{1, a} = (\ell_a, D)$, $D_{2, a} = D/D_{1,a}$ and similarly define $D_{1, b}$, $D_{2, b}$.  By the Chinese Remainder Theorem, we have that $\psi = \psi_{D_{1,a}}\psi_{D_{2, a}}=\psi_{D_{1,b}}\psi_{D_{2, b}}$ for unique $\psi_{D_{i, a}}$ and $\psi_{D_{i, b}}$ characters modulo $D_{i, a}$ and $D_{i, b}$, respectively. We apply the Voronoi summation
formula stated in Theorem \ref{Voronoithm} to the sums over $m,n$ in \eqref{summn}. In doing so, we can write $\mathcal{S}_{a,b}$ as a
principal term plus eight error terms,
\begin{equation}\label{afterVoronoi}
\mathcal{S}_{a,b}=\mathcal{M}_{a,b}+\sum_{i=1}^{8}\mathcal{E}_{i;a,b}.
\end{equation}

We first deal with the error terms. All the eight error terms can be treated similarly, so we only focus here on one of them, 
\begin{align*}
\mathcal{E}_{1;a,b}&=\tau(\psi_{D_{2,a}})\tau(\psi_{D_{2,b}})\sum_{0<|r|\leq R}\sum_{\ell<2Z}\, \sideset{}{^*}\sum_{k(\text{mod}\ \ell)}e\Big(\frac{-qrk}{\ell}\Big)\frac{\psi_{D_{1,a}}(-ka')\psi_{D_{2,a}}(\ell_a)\psi_{D_{1,b}}(-kb')\psi_{D_{2,b}}(\ell_b)}{D_{2,a}D_{2,b}}\\
&\qquad\qquad \sum_{m',n'}\big(\psi_{D_{1,a}}*\psi_{D_{2, a}}\big)(m')\big(\psi_{D_{1,b}}*\psi_{D_{2, b}}\big)(n')e\Big(\frac{\overline{ka'D_{2,a}}m'}{\ell_a}\mp\frac{\overline{kb'D_{2,b}}n'}{\ell_b}\Big)G_{a,b}(m',n'),
\end{align*}
where
\[
G_{a,b}(m',n')=\frac{4\pi^2}{\ell_a\ell_b}\int_{0}^{\infty}\int_{0}^{\infty} g_{a,b}(x,y)Y_0\Big(\frac{4\pi\sqrt{m'x}}{\ell_a\sqrt{D_{2,a}}}\Big)Y_0\Big(\frac{4\pi\sqrt{n'y}}{\ell_b\sqrt{D_{2,b}}}\Big)dxdy.
\]
Here $Y_0(x)$ is a Bessel function of the second kind. As $Z\leq \min\{\sqrt{aM},\sqrt{bN}\}$ we have 
\[
g_{a,b}^{(ij)}\ll_{i,j} \frac{1}{\ell Z}\Big(\frac{a}{\ell Z}\Big)^{i}\Big(\frac{b}{\ell Z}\Big)^{j}
\]
for any fixed $i,j\geq0$. Using the recurrence formula $(x^\nu Y_\nu(x))'=x^\nu Y_{\nu-1}(x)$ and the bound $Y_\nu(x)\ll x^{-1/2}$, integration by parts then implies that the sums are negligible unless 
\begin{equation}\label{rangemn}
m'\ll \frac{a^2M\ell_a^2D_{2,a}}{\ell^2 Z^{2-\varepsilon}}=\frac{a'^2MD_{2,a}}{Z^{2-\varepsilon}}\qquad\text{and}\qquad n'\ll\frac{b^2N\ell_b^2D_{2,b}}{\ell^2 Z^{2-\varepsilon}}=\frac{b'^2ND_{2,b}}{Z^{2-\varepsilon}}.
\end{equation}
For $m',n'$ in these ranges we bound $G_{a,b}(m',n')$ trivially using $Y_0(x)\ll x^{-1/2}$ and get
\begin{align}\label{boundGab}
G_{a,b}(m',n')&\ll \frac{(D_{2,a}D_{2,b})^{1/4}}{(\ell_a\ell_b)^{1/2}(m'n'MN)^{1/4}}\nonumber\\
&\qquad\qquad\int\int\bigg|\Delta_\ell(ax\mp by-qr)\varphi(ax\mp by-qr)\omega_1\Big(\frac{x}{M}\Big)\omega_2\Big(\frac{y}{N}\Big)\bigg| dxdy\nonumber\\
&=\frac{(D_{2,a}D_{2,b})^{1/4}}{ab(\ell_a\ell_b)^{1/2}(m'n'MN)^{1/4}}\int\int\bigg|\Delta_\ell(u)\varphi(u)\omega_1\Big(\frac{x}{aM}\Big)\omega_2\Big(\frac{\mp(u+qr-x)}{bN}\Big)\bigg| dxdu\nonumber\\
&\ll \frac{(D_{2,a}D_{2,b})^{1/4}}{ab(\ell_a\ell_b)^{1/2}(m'n'MN)^{1/4}}\min\{aM,bN\}\int\big|\Delta_\ell(u)\big| du\nonumber\\
&\ll_\eps Z^\eps \frac{(D_{2,a}D_{2,b})^{1/4}(MN)^{3/4}}{(\ell_a\ell_b)^{1/2}(m'n')^{1/4}(aM+bN)},
\end{align}
by \eqref{bdDelta}. Hence summing over $m',n'$ in the range \eqref{rangemn} we obtain
\[
\sum_{m',n'}\tau(m')\tau(n')\big|G_{a,b}(m',n')\big|\ll_\eps Z^{-3+\eps}\frac{D_{2,a}D_{2,b}(a'b'MN)^{3/2}}{(\ell_a\ell_b)^{1/2}(aM+bN)}.
\]
Furthermore, the sum over $k$ gives a Kloosterman sum, $S_{\psi_{D_{1,a}}\psi_{D_{1,b}}}(qr,*;\ell)$, for which we apply the Weil bound [\textbf{\ref{BC}}; Lemma 3]
\[
S_{\psi_{D_{1,a}}\psi_{D_{1,b}}}(qr,*;\ell)\ll_\varepsilon (qr,\ell)^{1/2}\ell^{1/2+\varepsilon}.
\] Hence 
\begin{align}\label{boundE}
\mathcal{E}_{a,b}\ll_\eps (qMN)^\eps Z^{-5/2}DR\frac{(abMN)^{3/2}}{aM+bN}\ll_\eps (qMN)^\eps q^{-1}Z^{-5/2}D(abMN)^{3/2}.
\end{align}

We now return to the principal term $\mathcal{M}_{a,b}$ in \eqref{afterVoronoi}. This corresponds to the product of the two constants terms after the applications of Theorem \ref{Voronoithm}, and hence
\begin{align}\label{MVoronoi}
\mathcal{M}_{a,b}&=L(1,\psi)^2\sum_{0<|r|\leq R}\sum_{\ell<2Z}\, \sideset{}{^*}\sum_{k(\text{mod}\ \ell)}\rho(ka',\ell_a)\rho(\mp kb',\ell_b)e\Big(\frac{-qrk}{\ell}\Big)\nonumber\\
&\qquad\qquad\int_{0}^{\infty}\int_{0}^{\infty}\Delta_\ell(ax\mp by-qr)\varphi(ax\mp by-qr)\omega_1\Big(\frac{x}{M}\Big)\omega_2\Big(\frac{y}{N}\Big)dxdy,
\end{align}
where, for $(k,\ell)=1$,
\[
\rho(k,\ell)=\frac{1}{\ell}\big(\psi(\ell)+\mathbf{1}_{D|\ell}\tau(\psi)\psi(k)\big).
\]

We first write the sum over $k$ in terms of Ramanujan sums. We have
\begin{align*}
&\rho(ka',\ell_a)\rho(\mp kb',\ell_b)=\frac{1}{\ell_a\ell_b}\big(\psi(\ell_a)+\mathbf{1}_{D|\ell_a}\tau(\psi)\psi(ka')\big)\big(\psi(\ell_b)+\mathbf{1}_{D|\ell_b}\tau(\psi)\psi(kb')\big)\\
&\qquad\qquad=\frac{1}{\ell_a\ell_b}\Big(\psi(\ell_a\ell_b)+\mathbf{1}_{D|(\ell_a,\ell_b)}D\psi(k^2a'b')+\tau(\psi)\psi(k)\big( \mathbf{1}_{D|\ell_a}\psi(a'\ell_b)+\mathbf{1}_{D|\ell_b}\psi(b'\ell_a)\big)\Big)\\
&\qquad\qquad=\frac{1}{\ell_a\ell_b}\Big(\psi(\ell_a\ell_b)+\mathbf{1}_{D|(\ell_a,\ell_b)}D\psi(a'b')+\tau(\psi)\psi(k)\big( \mathbf{1}_{D|(b,\ell)}\psi(a'\ell_b)+\mathbf{1}_{D|(a,\ell)}\psi(kb'\ell_a)\big)\Big).
\end{align*}
As $D\nmid a$ and $D\nmid b$, the last two terms vanish, and hence
\begin{align}\label{sumk}
\sideset{}{^*}\sum_{k(\text{mod}\ \ell)}\rho(ka',\ell_a)\rho(\mp kb',\ell_b)e\Big(\frac{-qkr}{\ell}\Big)=\frac{\big(\psi(\ell_a\ell_b)+\mathbf{1}_{D|(\ell_a,\ell_b)}D\psi(a'b')\big)S(qr,0;\ell)}{\ell_a\ell_b}.
\end{align}

Next we consider the double integrals. By a change of variables we have
\begin{align*}
&\int_{0}^{\infty}\int_{0}^{\infty}\Delta_\ell(ax\mp by-qr)\varphi(ax\mp by-qr)\omega_1\Big(\frac{x}{M}\Big)\omega_2\Big(\frac{y}{N}\Big)dxdy\\
&\qquad\qquad=\frac{1}{ab}\int\int\Delta_\ell(u)\varphi(u)\omega_1\Big(\frac{x}{aM}\Big)\omega_2\Big(\frac{\mp(u+qr-x)}{bN}\Big) dxdu.
\end{align*}
As in \eqref{boundGab}, this is bounded by
\[
\ll_\eps Z^\eps\frac{\min\{aM,bN\}}{ab}\ll_\eps Z^\eps\frac{MN}{aM+bN},
\]
and if $\ell\ll Z^{1-\eps}$, then, by [\textbf{\ref{DFI}}; (18)], this is equal to
\[
\frac{1}{ab}\int\omega_1\Big(\frac{x}{aM}\Big)\omega_2\Big(\frac{\mp(qr-x)}{bN}\Big) dx+O_C\big(Z^{-C}\big)
\]
for any fixed $C>0$. So in view of \eqref{sumk} and the Weil bound, we can first restrict the sum over $\ell$ in \eqref{MVoronoi} to $\ell\ll Z^{1-\eps}$, and then extend it to all $\ell$ at the cost of an error term of size
\begin{align*}
O_\eps\big(L(1,\psi)^2(qMN)^\eps q^{-1}Z^{-1}MN\big).
\end{align*}
Hence
\begin{align}\label{Mab}
\mathcal{M}_{a,b}&=\frac{L(1,\psi)^2}{ab}\sum_{0<|r|\leq R}\mathfrak{S}_{a,b}(qr)\int\omega_1\Big(\frac{x}{aM}\Big)\omega_2\Big(\frac{\mp(qr-x)}{bN}\Big) dx\nonumber\\
&\qquad\qquad+O_\eps\big(L(1,\psi)^2(qMN)^\eps q^{-1}Z^{-1}MN\big),
\end{align}
where $\mathfrak{S}_{a,b}$ is defined in \eqref{mathfrakS}.

Using the well known formula for the Ramanujan sum
\[
S(qr,0;\ell)=\sum_{\substack{\ell=uv\\v|qr}}\mu(u)v
\]
and the fact that $q$ is prime, we can replace $\mathfrak{S}_{a,b}(qr)$ in \eqref{Mab} by $\mathfrak{S}_{a,b}(r)$ with a negligible error term. Also, the condition $|r|\leq R$ can be removed due to the supports of $\omega_1$ and $\omega_2$. Finally, combining \eqref{boundE} and \eqref{Mab}, and choosing $Z=\min\{\sqrt{aM},\sqrt{bN}\}\asymp\sqrt{abMN}(aM+bN)^{-1/2}$ completes the proof.
\end{proof}

\section{The off-diagonal $I^{OD}$}\label{offdiagonal1}

We have
\begin{align}\label{850}
I^{OD}&=\frac{\phi(q)}{2}\sum_{\substack{am\equiv \pm bn(\text{mod}\ q)\\am\ne bn \\a,b\leq X\\D\nmid a,\,D\nmid b\\am,bn>1}}\frac{\rho(a)\rho(b)(1\star\psi)(m)(1\star\psi)(n)}{\sqrt{abmn}}V\Big(\frac{m}{Q}\Big)V\Big(\frac{n}{Q}\Big).
\end{align}
We first show that the conditions $am,bn>1$ may be removed with a negligible error term. Consider the contribution from the terms $bn=1$, which is
\begin{align*}
\frac{\phi(q)}{2}\sum_{\substack{am\equiv \pm 1(\text{mod}\ q)\\am\ne1\\a\leq X\\D\nmid a}}\frac{\rho(a)(1\star\psi)(m)}{\sqrt{am}}V\Big(\frac{m}{Q}\Big)\ll_\eps q^{1/2+\eps}\sum_{\substack{a\leq X\\D\nmid a}}\sum_{\substack{r\geq1\\r\equiv \mp\overline{q}(\text{mod }a)}}\frac{1}{\sqrt{r}}\,\Big|V\Big(\frac{qr\pm1}{aQ}\Big)\Big|.
\end{align*}
This can easily bounded by
\[
\ll_\eps q^{1/2+\eps}\sum_{\substack{a\leq X\\D\nmid a}}\sum_{\substack{r\leq (aQ/q)^{1+\eps}\\r\equiv \mp\overline{q}(\text{mod }a)}}\frac{1}{\sqrt{r}}\ll_\eps q^{1/2+\eps}D^{1/4}X^{1/2},
\]
as desired.

We now apply a dyadic partition of unity to the sums over $a,b,m$ and $n$ in \eqref{850}. Let $\omega$ be a smooth non-negative function supported in $[1, 2]$ such that
$$
\sum_{M} \omega \Big ( \frac{x}{M} \Big ) = 1,
$$
where $M$ runs over a sequence of real numbers with $\#\{M: X^{-1}\leq M\leq X\}\ll \log X$. With this partition of unity, we write
\begin{align*}
I^{OD}=\sum_{A,B,M,N} I^{OD}(A,B,M,N)+O_\eps \big(q^{1/2+\eps}D^{1/4}X^{1/2}\big),
\end{align*}
where 
\begin{align*}
I^{OD}(A,B,M,N)&=\frac{\phi(q)}{2}\sum_{h\geq1}\sum_{\substack{am\equiv \pm bn(\text{mod}\ q)\\am\ne bn\\D\nmid a,\,D\nmid b\\(a,b)=1}}\frac{\rho(ha)\rho(hb)(1\star\psi)(m)(1\star\psi)(n)}{h\sqrt{abmn}}\\
&\qquad\qquad\omega\Big ( \frac{ha}{A} \Big )\omega \Big ( \frac{hb}{B} \Big )\omega\Big ( \frac{m}{M} \Big )\omega \Big ( \frac{n}{N} \Big )V\Big(\frac{m}{Q}\Big)V\Big(\frac{n}{Q}\Big).
\end{align*}
Due to the rapid decay of the function $V$, we may assume that $M,N\ll Q^{1+\varepsilon}$.

We now choose $\delta_0=1/60$ and consider the following two cases.

\subsection{The case $ABMN$ small}\label{unbalanced}

If $ABMN\ll q^{2-2\delta_0}$, then we write $am\equiv \pm bn(\text{mod}\ q)$ and $am\ne bn$ as $am\mp bn=q r$ with $0< |r|\leq R=4(AM+BN)/q.$ Then
\[
I^{OD}(A,B,M,N)\ll_\eps \frac{q^{1+\eps}}{\sqrt{ABMN}}\sum_{0<|r|\leq R}\sum_{h\geq1}\sum_{am\mp bn=qr}\frac1h\bigg|\omega\Big ( \frac{ha}{A} \Big )\omega \Big ( \frac{hb}{B} \Big )\omega\Big ( \frac{m}{M} \Big )\omega \Big ( \frac{n}{N} \Big )\bigg|.
\]
We can take the innermost sum over all $a$ and $m$, and the sums over $b$ and $n$ being over all divisors of $(am-qr)$. So, by symmetry,
\begin{equation*}
I^{OD}(A,B,M,N)\ll_\eps \frac{q^{1+\eps} R}{\sqrt{ABMN}}\min\{AM,BN\}\ll_\eps q^{\eps}\sqrt{ABMN}\ll_\eps q^{1-\delta_0+\eps}.
\end{equation*}

\subsection{The case $ABMN$ large}\label{balanced}

We consider the remaining case $ABMN\gg q^{2-2\delta_0}$. Writing
\[
\omega_1(x)=\omega(x)V\Big(\frac{xM}{Q}\Big)\qquad\text{and}\qquad \omega_2(x)=\omega(x)V\Big(\frac{xN}{Q}\Big).
\]
Applying Proposition \ref{propoff} then yields
\[
I^{OD}(A,B,M,N)=\mathcal{M}(A,B,M,N)+\mathcal{E}(A,B,M,N)
\]
with
\begin{align*}
\mathcal{E}(A,B,M,N)&\ll_\eps q^{\eps}D(AB)^{3/4}(MN)^{-1/4}(AM+BN)^{5/4}\\
&\ll q^{\eps}DX^{11/4}\Big(\frac{M}{N^{1/4}}+\frac{N}{M^{1/4}}\Big)\ll_\eps q^{1+\eps}D^{3/2}X^{11/4}\big(M^{-1/4}+N^{-1/4}\big),
\end{align*}
as $M,N\ll L^{1+\eps}\asymp q^{1+\eps}\sqrt{D}$. Since $ABMN\gg q^{2-2\delta_0}$, it follows that
\[
M,N \gg q^{1-2\delta_0-\eps}D^{-1/2}X^{-2}.
\] 
Thus
\[
\mathcal{E}(A,B,M,N)\ll_\eps  q^{3/4+\delta_0/2+\eps}D^{13/8}X^{13/4}\ll q^{1-\delta_0}.
\]

For $\mathcal{M}(A,B,M,N)$, by Proposition \ref{propoff} we have
\[
\mathcal{M}(A,B,M,N)=\mathcal{M}^+(A,B,M,N)+\mathcal{M}^-(A,B,M,N),
\]
where
\begin{align*}
\mathcal{M}^+(A,B,M,N)&=\frac{\phi(q)}{2}L(1,\psi)^2\sum_{\substack{h\geq1\\(a,b)=1}}\sum_{r\ne 0}\frac{\rho(ha)\rho(hb)\mathfrak{S}_{a,b}(r)}{h(ab)^{3/2}}\omega\Big ( \frac{ha}{A} \Big )\omega \Big ( \frac{hb}{B} \Big )\\
&\qquad\qquad \int (x/a)^{-1/2}(x/b-qr/b)^{-1/2}\,\omega_1\Big(\frac{x}{aM}\Big)\omega_2\Big(\frac{x-qr}{bN}\Big) dx
\end{align*}
and
\begin{align*}
\mathcal{M}^-(A,B,M,N)&=\frac{\phi(q)}{2}L(1,\psi)^2\sum_{\substack{h\geq1\\(a,b)=1}}\sum_{r\ne 0}\frac{\rho(ha)\rho(hb)\mathfrak{S}_{a,b}(r)}{h(ab)^{3/2}}\omega\Big ( \frac{ha}{A} \Big )\omega \Big ( \frac{hb}{B} \Big )\\
&\qquad\qquad \int (x/a)^{-1/2}(qr/b-x/b)^{-1/2}\,\omega_1\Big(\frac{x}{aM}\Big)\omega_2\Big(\frac{qr-x}{bN}\Big) dx.
\end{align*}

Writing $\omega_1$ and $\omega_2$ in terms of their Mellin transforms we get
\begin{align*}
&\mathcal{M}^+(A,B,M,N)=\frac{\phi(q)}{2}\frac{L(1,\psi)^2}{(2\pi i)^2}\int_{(c_2)}\int_{(c_1)}\widetilde{\omega_1}(u)\widetilde{\omega_2}(v)M^{u}N^{v}\\
&\qquad\qquad\sum_{\substack{h\geq1\\(a,b)=1}} \sum_{r\ne 0}\frac{\rho(ha)\rho(hb)\mathfrak{S}_{a,b}(r)}{ha^{1+v}b^{1+u}} \omega\Big ( \frac{ha}{A} \Big )\omega \Big ( \frac{hb}{B} \Big )\int x^{-(1/2+u)}(x-qr/ab)^{-(1/2+v)}dxdudv.
\end{align*}
Note that if $r>0$ then the integral over $x$ is restricted to $x>qr/ab$, and if $r<0$ then it is restricted to $x>0$. For absolute convergence, we shall require the conditions
\begin{equation}\label{contour1}\begin{cases}
\text{Re}(u+v)>0,\ \text{Re}(v)<1/2 & \quad\text{if }r>0,\\
\text{Re}(u+v)>0,\ \text{Re}(u)<1/2 & \quad\text{if }r<0.
\end{cases}\end{equation} Under these assumptions, the $x$-integral is equal to (see, for instance, 17.43.21 and 17.43.22 of [\textbf{\ref{GR}}])
\[
\Big(\frac {q|r|}{ab}\Big)^{-(u+v)}\times\begin{cases}
\frac{\Gamma(u+v)\Gamma(1/2-v)}{\Gamma(1/2+u)} & \quad\text{if }r>0,\\
\frac{\Gamma(u+v)\Gamma(1/2-u)}{\Gamma(1/2+v)} & \quad\text{if } r<0.
\end{cases}
\]
 Hence
\begin{align*}
\mathcal{M}^+(A,B,M,N)&=\frac{\phi(q)}{2}\frac{L(1,\psi)^2}{(2\pi i)^2}\sum_{\substack{h\geq1\\(a,b)=1}}\sum_{r\geq1}\frac{\rho(ha)\rho(hb)\mathfrak{S}_{a,b}(r)}{hab} \omega\Big ( \frac{ha}{A} \Big )\omega \Big ( \frac{hb}{B} \Big )\\
&\qquad\qquad \int_{(c_2)}\int_{(c_1)} \widetilde{\omega_1}(u)\widetilde{\omega_2}(v)\Big(\frac {q}{aM}\Big)^{-u}\Big(\frac {q}{bN}\Big)^{-v}H^+(u,v)r^{-(u+v)}dudv,
\end{align*}
where
\[
H^+(u,v)=\Gamma(u+v)\bigg(\frac{\Gamma(1/2-v)}{\Gamma(1/2+u)}+\frac{\Gamma(1/2-u)}{\Gamma(1/2+v)}\bigg).
\]

A similar formula holds for $\mathcal{M}_1^-(A,B,M,N)$ with $H^+$ being replaced by $H^-$, where
\[
H^-(u,v)=\frac{\Gamma(1/2-u)\Gamma(1/2-v)}{\Gamma(1-u-v)},
\]
and the imposed conditions for the absolute convergence are
\begin{equation}\label{contour2}
\text{Re}(u),\ \text{Re}(v)<1/2.
\end{equation}
So
\begin{align*}
&\mathcal{M}^+(A,B,M,N)+\mathcal{M}^-(A,B,M,N)\\
&\qquad\qquad =\frac{\phi(q)}{2}L(1,\psi)^2\sum_{\substack{h\geq1\\(a,b)=1}}\frac{\rho(ha)\rho(hb)}{hab} \omega\Big ( \frac{ha}{A} \Big )\omega \Big ( \frac{hb}{B} \Big )I_{a,b}(M,N),
\end{align*}
where
\begin{align}\label{Nintegral}
I_{a,b}(M,N)= \frac{1}{(2\pi i)^2}\sum_{r\geq1}\mathfrak{S}_{a,b}(r)\int_{(c_2)}\int_{(c_1)} \widetilde{\omega_1}(u)\widetilde{\omega_2}(v)\Big(\frac {q}{aM}\Big)^{-u}\Big(\frac {q}{bN}\Big)^{-v}H(u,v)r^{-(u+v)}dudv
\end{align}
with
\begin{align*}
H(u,v)=\Gamma(u+v)\bigg(\frac{\Gamma(1/2-v)}{\Gamma(1/2+u)}+\frac{\Gamma(1/2-u)}{\Gamma(1/2+v)}\bigg)+\frac{\Gamma(1/2-u)\Gamma(1/2-v)}{\Gamma(1-u-v)}.
\end{align*}

\subsubsection{The $r$-sum}

Next we want to replace the sum over $r$ in \eqref{Nintegral} by the Riemann zeta-function. We note from [\textbf{\ref{T}}; (1.5.5)] that
\begin{align*}
\sum_{r\geq1}\frac{S(r,0;\ell)}{r^s}=\zeta(s)\sum_{d|\ell}\mu\Big(\frac\ell d\Big)d^{1-s}=\zeta(s)l^{1-s}\prod_{p|\ell}(1-p^{s-1})
\end{align*}
for $\text{Re}(s)>1$. So we get
\begin{align*}
\sum_{\ell\geq1}\frac{\psi(\ell_a\ell_b)}{\ell_a\ell_b}\sum_{r\geq1}\frac{S(r,0;\ell)}{r^s}&=\zeta(s)\bigg(\sum_{\substack{a_1|a\\b_1|b}}\frac{\psi(a_1b_1)}{a_1^sb_1^s}\prod_{p|a_1b_1}(1-p^{s-1})\bigg)\sum_{(\ell,abD)=1}\frac{1}{\ell^{1+s}}\prod_{p|\ell}(1-p^{s-1})\\
&=\zeta(s)\bigg(\sum_{\substack{a_1|a\\b_1|b}}\frac{\psi(a_1b_1)}{a_1^sb_1^s}\prod_{p|a_1b_1}(1-p^{s-1})\bigg)\prod_{p\nmid abD}\bigg(1-\frac{1}{p^{1+s}}\bigg)^{-1}\bigg(1-\frac{1}{p^2}\bigg).
\end{align*}
Similarly,
\begin{align*}
&\sum_{\ell\geq1}\frac{\mathbf{1}_{D|(\ell_a,\ell_b)}D\psi(a'b')}{\ell_a\ell_b}\sum_{r\geq1}\frac{S(r,0;\ell)}{r^s}\\
&\qquad\qquad=\mathbf{1}_{(D,ab)=1}\zeta(s)\bigg(\sum_{\substack{a_1|a\\b_1|b}}\frac{\psi(ab/a_1b_1)}{a_1^sb_1^sD^s}\prod_{p|a_1b_1D}(1-p^{s-1})\bigg)\prod_{p\nmid abD}\bigg(1-\frac{1}{p^{1+s}}\bigg)^{-1}\bigg(1-\frac{1}{p^2}\bigg).
\end{align*}
Hence for $\text{Re}(s)>1$,
\begin{align*}
\sum_{r\geq1}\frac{\mathfrak{S}_{a,b}(r)}{r^s}&=\sum_{\ell\geq1}\frac{\psi(\ell_a\ell_b)+\mathbf{1}_{D|(\ell_a,\ell_b)}D\psi(a'b')}{\ell_a\ell_b}\sum_{r\geq1}\frac{S(r,0;\ell)}{r^s}=G_{a,b}(s)\zeta(s)\zeta(1+s),
\end{align*}
where, in particular, $G_{a,b}(1)=1/\zeta(2)$, and 
\begin{equation}\label{estimateG}
G_{a,b}(s)\ll_{\eps} (\log abD)^{\eps}\tau(a)\tau(b)
\end{equation}
uniformly for $|\text{Re}(s)|\ll 1/\log q$.

We move the $u$-contour in \eqref{Nintegral} to the right to $\text{Re}(u)=1$, crossing a simple pole at $u=1/2$ from the gamma factors. In doing so we obtain 
\[
I_{a,b}(M,N)=I_{a,b}'(M,N)-\mathcal{R}_{a,b}(M,N),
\]
where $I_{a,b}'(M,N)$ is the new integral and
\begin{align*}
\mathcal{R}_{a,b}(M,N)&=-\frac{2\widetilde{\omega_1}(1/2)}{2\pi i}\Big(\frac {q}{aM}\Big)^{-1/2}\sum_{r\geq1}\mathfrak{S}_{a,b}(r)\int_{(c_2)}\widetilde{\omega_2}(v)\Big(\frac {q}{bN}\Big)^{-v}r^{-(1/2+v)}dv.
\end{align*}
As moving the $v$-contour in the above expression to $\text{Re}(v)=1$ encounters no pole, we have
\begin{align}\label{R1}
&\mathcal{R}_{a,b}(M,N)=-\frac{2\widetilde{\omega_1}(1/2)}{2\pi i}\Big(\frac {q}{aM}\Big)^{-1/2}\int_{(1)}\widetilde{\omega_2}(v)\Big(\frac {q}{bN}\Big)^{-v}G_{a,b}(\tfrac12+v)\zeta(\tfrac12+v)\zeta(\tfrac32+v)dv.
\end{align}
With $I_{a,b}'(M,N)$, the $r$-sum can be written as $G_{a,b}(u+v)\zeta(u+v)\zeta(1+u+v)$. We then shift the $u$-contour back to $\text{Re}(u)=c_1$, encountering a simple pole at $u=1/2$ again. Note that $H(1-v,v)=0$, which cancels out the pole at $u=1-v$ of the zeta-function. We write
\[
I_{a,b}'(M,N)=I_{a,b}''(M,N)+\mathcal{R}_{a,b}'(M,N)
\]
accordingly, where $\mathcal{R}_{a,b}'(M,N)$ is the residue at $u=1/2$. It follows that
\[
I_{a,b}(M,N)=I_{a,b}''(M,N)-\big(\mathcal{R}_{a,b}(M,N)-\mathcal{R}_{a,b}'(M,N)\big).
\]

Note that $\mathcal{R}_{a,b}'(M,N)$ is the same as $\mathcal{R}_{a,b}(M,N)$ in \eqref{R1} but with the $v$-contour being along $\text{Re}(v)=c_2$. So the difference $\mathcal{R}_{a,b}(M,N)-\mathcal{R}_{a,b}'(M,N)$ is the residue at $v=1/2$ of \eqref{R1}, which is
\begin{align*}
-\frac{2\widetilde{\omega_1}(1/2)\widetilde{\omega_2}(1/2)\sqrt{abMN}}{q},
\end{align*}
as $G_{a,b}(1)=1/\zeta(2)$. Thus,
\begin{align}\label{IabMN}
&I_{a,b}(M,N)= \frac{1}{(2\pi i)^2}\int_{(c_2)}\int_{(c_1)} \widetilde{\omega_1}(u)\widetilde{\omega_2}(v)\Big(\frac {q}{aM}\Big)^{-u}\Big(\frac {q}{bN}\Big)^{-v}\nonumber\\
&\qquad\qquad H(u,v)G_{a,b}(u+v)\zeta(u+v)\zeta(1+u+v)dudv -\frac{2\widetilde{\omega_1}(1/2)\widetilde{\omega_2}(1/2)\sqrt{abMN}}{q}.
\end{align}

\subsection{Assembling the partition of unity}

Recall from Subsection \ref{unbalanced} that 
\begin{equation*}
I^{OD}(A,B,M,N)\ll_\eps q^{1-\delta_0+\eps}
\end{equation*}
if $ABMN\ll q^{2-2\delta_0}$, and from Subsection \ref{balanced} that
\begin{align*}
I^{OD}(A,B,M,N)&=\frac{\phi(q)}{2}L(1,\psi)^2\sum_{\substack{h\geq1\\(a,b)=1}}\frac{\rho(ha)\rho(hb)}{hab} \omega\Big ( \frac{ha}{A} \Big )\omega \Big ( \frac{hb}{B} \Big )I_{a,b}(M,N)+O\big(q^{1-\delta_0}\big)
\end{align*}
if $ABMN\gg q^{2-2\delta_0}$. Summing up we obtain
\begin{align}\label{1stbdIOD}
I^{OD}=\frac{\phi(q)}{2}L(1,\psi)^2\sum_{\substack{A,B,M,N\\ABMN\gg q^{2-2\delta_0}}}\sum_{\substack{h\geq1\\(a,b)=1}}\frac{\rho(ha)\rho(hb)}{hab} \omega\Big ( \frac{ha}{A} \Big )\omega \Big ( \frac{hb}{B} \Big )I_{a,b}(M,N)+O_\eps\big(q^{1-\delta_0+\eps}\big).
\end{align} 

The following result shall allow us to add all the missing tuples $(A, B,M,N)$, and thus to remove the partition of unity.

\begin{lemma}
With $I_{a,b}(M,N)$ defined as in \eqref{IabMN} we have
\[
I_{a,b}(M,N)\ll_\eps q^{-1+\eps}\sqrt{abMN}.
\]
\end{lemma}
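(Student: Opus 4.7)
The target bound $q^{-1+\eps}\sqrt{abMN}$ already matches the size of the explicit subtracted term $-2\widetilde{\omega_1}(1/2)\widetilde{\omega_2}(1/2)\sqrt{abMN}/q$ appearing in \eqref{IabMN}, so the plan is to bound the remaining double integral in \eqref{IabMN} by the same quantity. To do this, I would shift both contours from the small positive lines $\text{Re}(u)=c_1$, $\text{Re}(v)=c_2$ to $\text{Re}(u)=\text{Re}(v)=\tfrac12-\eps$. On the shifted lines the size factor satisfies
\[
\Big|\Big(\frac{q}{aM}\Big)^{-u}\Big(\frac{q}{bN}\Big)^{-v}\Big|=\Big(\frac{aM}{q}\Big)^{1/2-\eps}\Big(\frac{bN}{q}\Big)^{1/2-\eps}\ll q^{\eps}\,\frac{\sqrt{abMN}}{q},
\]
since $aM,bN\leq q^{1+\eps}$ (using $X=D^{20}$, $M,N\ll Q^{1+\eps}$, and $D\ll q^{\eps}$).

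First, I would verify that no poles of the integrand are encountered during the shift. The possible poles of $H(u,v)$ lie at $u+v\in\{0,-1,\ldots\}$ (from $\Gamma(u+v)$) and at $u\in\{\tfrac12,\tfrac32,\ldots\}$ or $v\in\{\tfrac12,\tfrac32,\ldots\}$ (from $\Gamma(\tfrac12-u)$ and $\Gamma(\tfrac12-v)$), and none of these lies in the rectangle $[c_1,\tfrac12-\eps]\times[c_2,\tfrac12-\eps]$. The factor $\zeta(u+v)$ has its simple pole at $u+v=1$, which is not reached since $\text{Re}(u+v)\leq 1-2\eps$ on the new contour, while $\zeta(1+u+v)$ remains holomorphic to the right of the starting contour. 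Finally $G_{a,b}(s)$ is holomorphic in $\text{Re}(s)>0$: this follows from the identity $\sum_r\mathfrak{S}_{a,b}(r)r^{-s}=G_{a,b}(s)\zeta(s)\zeta(1+s)$ together with the explicit Euler-product formulas derived in the proof of Proposition \ref{propoff}, which make clear that the pole of $\zeta(s)$ is cancelled.

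Second, I would bound the integrand on the shifted contours. Parametrising $u=\tfrac12-\eps+it_1$, $v=\tfrac12-\eps+it_2$: the Mellin transforms $\widetilde{\omega_i}$ decay faster than any polynomial in $|t_i|$ (with only a harmless $q^{\eps}$ loss from the bound $\omega_i^{(j)}\ll q^{\eps}$); by Stirling, each of the three summands defining $H(u,v)$ grows at most polynomially in $|t_1|+|t_2|$; and $|\zeta(u+v)\zeta(1+u+v)|\ll (1+|t_1+t_2|)^{\eps}$ by standard estimates in the critical strip. Combined with $|G_{a,b}(u+v)|\ll_\eps q^{\eps}\tau(a)\tau(b)$, integration in $t_1,t_2$ yields the bound $\ll_\eps q^{-1+\eps}\sqrt{abMN}$ on the double integral, completing the proof. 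The main obstacle is this last bound on $G_{a,b}$: the estimate \eqref{estimateG} is stated only in a narrow strip around a specific point, so one must revisit the Euler-product description of $G_{a,b}(s)$ and confirm that it remains bounded by $q^{\eps}\tau(a)\tau(b)$ throughout the strip $\tfrac12+\eps\leq \text{Re}(s)\leq 1$; this is routine but requires some care with the implicit $\zeta(s)^{-1}$ factor in $G_{a,b}$, which must be controlled away from the critical line.
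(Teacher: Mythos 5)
Your proposal is correct and is essentially the paper's own argument: the paper treats $c_1,c_2$ in \eqref{IabMN} as free parameters subject to $c_1+c_2>0$ and $c_1,c_2<1/2$ (the rapid decay of $H$ and $\widetilde{\omega_i}$ handling convergence), and then simply sets $c_1=c_2=1/2-\eps$, which is exactly the contour you reach by shifting, yielding the same $(aM/q)^{1/2-\eps}(bN/q)^{1/2-\eps}\ll q^{-1+\eps}\sqrt{abMN}$ bound. One clarification on your final remark: there is no lurking $\zeta(s)^{-1}$ inside $G_{a,b}(s)$; the $\zeta(s)$ that arises in the explicit computation cancels exactly against the $\zeta(s)$ in the factorization $\sum_r\mathfrak{S}_{a,b}(r)r^{-s}=G_{a,b}(s)\zeta(s)\zeta(1+s)$, and the residual $\zeta(1+s)^{-1}$ collapses against $\prod_{p\nmid abD}(1-p^{-1-s})^{-1}$ to the finite product $\prod_{p\mid abD}(1-p^{-1-s})$, so $G_{a,b}$ is a finite divisor sum times a finite Euler product; it is entire and the bound $G_{a,b}(u+v)\ll_\eps q^{\eps}\tau(a)\tau(b)$ on $\text{Re}(u+v)=1-2\eps$ (which, you are right, is not literally what \eqref{estimateG} states) follows at once by inspection.
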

\begin{proof}
The second term of $I_{a,b}(M,N)$ clearly satisfies the above bound. For the first term, recall from \eqref{contour1} and \eqref{contour2} that $H(u,v)$ has rapid decay as any of the variables gets large in the imaginary directions. So we have, trivially, 
\[
I_{a,b}(M,N)\ll_\eps \Big(\frac {q}{aM}\Big)^{-c_1}\Big(\frac {q}{bN}\Big)^{-c_2}+q^{-1+\eps}\sqrt{abMN},
\]
provided that
\[
c_1+c_2>0,\qquad c_1,\, c_2<\frac12.
\]
The lemma follows by choosing $c_1=c_2=1/2-\eps$.
\end{proof}

The above lemma and \eqref{1stbdIOD} imply that
\begin{align*}
I^{OD}&=\frac{\phi(q)}{2}L(1,\psi)^2\sum_{h\leq X}\frac 1h\sum_{\substack{a,b\leq X/h\\D\nmid a,\,D\nmid b\\(a,b)=1}}\frac{\rho(ha)\rho(hb)}{ab}\sum_{M,N}I_{a,b}(M,N)+O_\eps\big(q^{1-\delta_0+\eps}\big).
\end{align*}
Note that
\[
M^u\widetilde{\omega_1}(u)=\int_{0}^{\infty}\omega_1\Big(\frac xM\Big)V\Big(\frac{x}{Q}\Big)x^{u-1}dx.
\]
So we have
\begin{align}\label{700}
I^{OD}&=\mathcal{M}_{1}^{OD}+\mathcal{M}_{2}^{OD}+O_\eps\big(q^{1-\delta_0+\eps}\big),
\end{align}
where
\begin{align*}
\mathcal{M}_{1}^{OD}= \frac{\phi(q)}{2}L(1,\psi)^2\sum_{h\leq X}\frac 1h\sum_{\substack{a,b\leq X/h\\D\nmid a,\,D\nmid b\\(a,b)=1}}\frac{\rho(ha)\rho(hb)}{ab}I_{a,b}
\end{align*}
with
\begin{equation*}
I_{a,b}=\frac{1}{(2\pi i)^2}\int_{(c_2)}\int_{(c_1)}\widetilde{V}(u)\widetilde{V}(v)\big(\sqrt{D}a\big)^{u}\big(\sqrt{D}b\big)^{v}H(u,v) G_{a,b}(u+v)\zeta(u+v)\zeta(1+u+v)dudv,
\end{equation*}
and
\begin{align}\label{701}
\mathcal{M}_{2}^{OD}&=\phi(q) L(1,\psi)^2\widetilde{V}(\tfrac12)^2\sqrt{D}\sum_{h\leq X}\frac 1h\sum_{\substack{a,b\leq X/h\\D\nmid a,\,D\nmid b\\(a,b)=1}}\frac{\rho(ha)\rho(hb)}{\sqrt{ab}}\nonumber\\
&= L(1,\psi)^2\widetilde{V}(\tfrac12)^2Q\sum_{\substack{a,b\leq X\\D\nmid a,\,D\nmid b}}\frac{\rho(a)\rho(b)}{\sqrt{ab}}+O_\eps\big(L(1,\psi)^2q^{\eps}\sqrt{D}X\big).
\end{align}
Note that $\mathcal{M}_{2}^{OD}$ cancels out the third term in \eqref{mainformulaI} up to an error of size $O_\eps(L(1,\psi)^2q^{\eps}\sqrt{D}X)$.

It remains to evaluate $\mathcal{M}_{1}^{OD}$. From Lemma 8.2 of [\textbf{\ref{Y}}], we have
\[
H(u,v)=\pi^{1/2}\frac{\Gamma(\frac{u+v}{2})\Gamma(\frac{1/2-u}{2})\Gamma(\frac{1/2-v}{2})}{\Gamma(\frac{1-u-v}{2})\Gamma(\frac{1/2+u}{2})\Gamma(\frac{1/2+v}{2})}.
\] 
By the functional equation of the Riemann zeta-function,
\[
\pi^{1/2}\frac{\Gamma(\frac{u+v}{2})}{\Gamma(\frac{1-u-v}{2})}\zeta(u+v)=\pi^{u+v}\zeta(1-u-v),
\]
it follows that
\begin{align*}
I_{a,b}&=\frac{1}{(2\pi i)^2}\int_{(c_2)}\int_{(c_1)}\widetilde{V}(u)\widetilde{V}(v)\frac{\Gamma(\frac{1/2-u}{2})\Gamma(\frac{1/2-v}{2})}{\Gamma(\frac{1/2+u}{2})\Gamma(\frac{1/2+v}{2})}\\
&\qquad\qquad\qquad\qquad\qquad\big(\pi\sqrt{D}a\big)^{u}\big(\pi\sqrt{D}b\big)^{v} G_{a,b}(u+v)\zeta(1-u-v)\zeta(1+u+v)dudv.
\end{align*}
We choose $c_1=c_2=1/\log q$ and estimate the double integrals trivially using \eqref{estimateG}. In doing so we get
\[
I_{a,b}\ll_\eps \tau(a)\tau(b)(\log q)^{4+\eps}.
\]
Thus
\[
\mathcal{M}_{1}^{OD}\ll_\eps L(1,\psi)^2q(\log q)^{16+\eps},
\]
and combining with \eqref{700} and \eqref{701} leads to
\begin{align}\label{boundIOD}
I^{OD}&= L(1,\psi)^2\widetilde{V}(\tfrac12)^2Q\sum_{\substack{a,b\leq X\\D\nmid a,\,D\nmid b}}\frac{\rho(a)\rho(b)}{\sqrt{ab}}+O_\eps \big(L(1,\psi)^2q(\log q)^{16+\eps}\big).
\end{align}

Proposition \ref{mainprop} follows from \eqref{mainformulaI}, \eqref{boundID} and \eqref{boundIOD}.

\appendix
\section{Voronoi summation formula}

Assume $D > 1$ is a squarefree fundamental discriminant. We let $\psi$ denote a real primitive character modulo $D$, so that $\psi$ is an even Dirichlet character. We shall prove the following summation formula.

\begin{theorem}\label{Voronoithm}
Let $D$ and $\psi$ be as above and let $(a,c)=1$. Let $\psi_1$ be a real primitive character modulo $(c,D)$ and let $\psi_2$ be a real primitive character modulo $D_c=D/(c,D)$ such that $\psi = \psi_1 \psi_2$. For any smooth function $g$ compactly supported in $\mathbb{R}_{>0}$, we have
\begin{align}\label{eqn:summation formula}
\sum_n (1\star\psi)(n) e \Big(\frac{an}{c} \Big) g(n) &= \rho(a,c) L(1,\psi) \int_0^\infty g(x) dx + T(a,c),
\end{align}
where
\begin{align*}
\rho(a,c) &= 
\begin{cases}
\psi(c)/c  &\text{if }D\nmid c, \\
\tau(\psi)\psi(a)/c &\text{if } D| c
\end{cases}
\end{align*}
and
\begin{align*}
T(a,c) &= -2\pi\tau(\psi_2)\frac{\psi_1(-a) \psi_2(c)}{cD_c}\sum_m (\psi_1 \star \psi_2)(m) e \Big(\frac{-\overline{a D_c}m}{c} \Big)\int_0^\infty g(x) Y_0 \Big(\frac{4\pi\sqrt{mx}}{c\sqrt{D_c}} \Big)dx \\
&\qquad\qquad+4\tau(\psi_2)\frac{\psi_1(a) \psi_2(c)}{cD_c} \sum_m (\psi_1 \star \psi_2)(m) e \Big(\frac{\overline{a D_c}m}{c} \Big)\int_0^\infty g(x) K_0  \Big(\frac{4\pi\sqrt{mx}}{c\sqrt{D_c}} \Big)dx.
\end{align*}
Here $\tau(\chi)$ denotes the Gauss sum for the character $\chi$.
\end{theorem}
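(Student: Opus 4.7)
The strategy is Mellin inversion combined with a functional equation for the additively twisted Dirichlet series
\begin{equation*}
L_{a,c}(s) := \sum_{n\geq1} \frac{(1\star\psi)(n) e(an/c)}{n^s} \qquad (\Re(s)>1).
\end{equation*}
Writing $\widetilde{g}(s) = \int_0^\infty g(x) x^{s-1} dx$ for the Mellin transform of the (smooth, compactly supported) weight $g$, one has
\begin{equation*}
\sum_n (1\star\psi)(n) e(an/c) g(n) = \frac{1}{2\pi i} \int_{(\sigma)} \widetilde g(s) L_{a,c}(s)\, ds \qquad (\sigma>1).
\end{equation*}
I would shift the contour to $\Re(s) = -\sigma$, picking up a residue at $s=1$ to produce the main term $\rho(a,c) L(1,\psi) \int g$, and then convert the shifted integral into Bessel-kernel transforms of $g$ via a functional equation.

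The crucial ingredient is the functional equation for $L_{a,c}(s)$. Using $(1\star\psi)(n)=\sum_{d\mid n}\psi(d)$, the CRT factorization $D = (c,D) D_c$ with $\psi = \psi_1 \psi_2$, and the Gauss-sum inversion $\psi_2(n) = \tau(\psi_2)^{-1} \sum_m \psi_2(m) e(mn/D_c)$ (which converts the multiplicative $\psi_2$-twist into an additive one), the series $L_{a,c}(s)$ rearranges into a weighted sum of Hurwitz zeta functions with rational shifts. Applying the classical Hurwitz zeta functional equation --- equivalently, Poisson summation on each residue class modulo $cD_c$ --- then yields an identity of the form
\begin{equation*}
L_{a,c}(s) = \gamma_Y(s)\, \mathcal{G}_Y(a,c)\, L^{\vee}_{Y}(1-s) + \gamma_K(s)\, \mathcal{G}_K(a,c)\, L^{\vee}_{K}(1-s),
\end{equation*}
where $\gamma_Y,\gamma_K$ are explicit gamma-quotients whose inverse Mellin transforms produce $Y_0$ and $K_0$ respectively, the prefactors $\mathcal{G}_\bullet$ are precisely the Gauss-sum and character evaluations $\tau(\psi_2)\psi_1(\pm a)\psi_2(c)/(cD_c)$ appearing in the theorem, and the dual series carry the coefficients $(\psi_1\star\psi_2)(m)\, e(\pm \overline{aD_c}\, m / c)$.

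The only pole of the integrand in the strip $-\sigma < \Re(s) < \sigma$ is the simple pole at $s=1$ inherited from the pole of $\zeta(s)$ inside $\zeta(s) L(s,\psi)$. Its residue $\rho(a,c) L(1,\psi)$ splits into two cases: when $D \nmid c$, a direct computation from the Dirichlet expansion isolates the contribution of $n$ with $c\mid n$ and $\gcd(n,D)=1$, giving $\rho(a,c)=\psi(c)/c$; when $D\mid c$, the direct computation gives zero and one must instead extract the residue from the Gauss-sum-inverted form of $\psi$ (or equivalently, read it off the functional equation itself), giving $\rho(a,c)=\tau(\psi)\psi(a)/c$. Multiplying by $\widetilde g(1) = \int_0^\infty g$ yields the stated main term.

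After the contour shift and application of the functional equation, the remaining integral reduces, upon interchanging sum and integral (justified by the rapid decay of $\widetilde g$), to a sum over $m$ of Mellin--Barnes integrals, which are recognized via the standard formulas for the Mellin transforms of $Y_0$ and $K_0$ as the kernels $Y_0\bigl(4\pi\sqrt{mx}/(c\sqrt{D_c})\bigr)$ and $K_0\bigl(4\pi\sqrt{mx}/(c\sqrt{D_c})\bigr)$ integrated against $g(x)$. The principal obstacle is establishing the functional equation with the precise arithmetic constants: disentangling the contributions from divisors $d$ of $n$ sharing a factor with $(c,D)$ versus with $D_c$, tracking the Gauss-sum prefactors and sign conventions introduced by each CRT step, and producing the correct dual coefficients $(\psi_1 \star \psi_2)(m)$ together with the twist $e(\pm \overline{aD_c}\, m / c)$. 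Once the functional equation is in hand with these constants, the remaining analytic steps are routine.
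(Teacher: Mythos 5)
You propose to derive the formula via Mellin inversion together with a functional equation for the additively twisted series $L_{a,c}(s)=\sum_n(1\star\psi)(n)e(an/c)n^{-s}$, whereas the paper works in physical space: it opens the convolution $1\star\psi$, applies smooth partitions of unity and a regularization $\tau_\nu(n,\psi)$, performs a double Poisson summation in the two resulting variables, and evaluates the arithmetic (CRT to split $\psi=\psi_1\psi_2$, reciprocity for $k\gamma/[c,D]$, the Gauss sum in $\beta$) in the course of the computation, recognizing the Bessel transforms only at the last step via the integral identities of [IK, (4.112)--(4.115)]. These are dual, both classical, organizations of the same proof: yours is more modular (arithmetic isolated in the functional equation, analysis in the contour shift and Mellin--Barnes kernels), while the paper's never names the functional equation as a separate object and so is more self-contained.

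The genuine gap is that the functional equation --- with the precise prefactors $\tau(\psi_2)\psi_1(\pm a)\psi_2(c)/(cD_c)$, dual coefficients $(\psi_1\star\psi_2)(m)$, twist $e(\mp\overline{aD_c}\,m/c)$, and the gamma quotients that invert to $Y_0$ and $K_0$ --- \emph{is} the theorem, and your proposal asserts it rather than proving it. The ingredients you name (CRT split, Gauss-sum inversion of $\psi_2$, Hurwitz-zeta/Poisson) are the right ones, but carrying them out meets the same bookkeeping the paper does directly, plus a convergence subtlety that the paper handles with the $\tau_\nu$-regularization and $\nu\to 0$ limit: in the Mellin picture this reappears as the need to justify the term-by-term application of the functional equation and the interchange of the $m$-sum with the Mellin--Barnes integral near the boundary of absolute convergence. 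The residue claim for $\rho(a,c)$ is plausible but not established either: $L_{a,c}(s)$ is a genuine double Dirichlet series, and when $D\mid c$ the naive ``$c\mid n$'' contribution vanishes identically (since then $\psi(n)=0$), so the nonzero residue $\tau(\psi)\psi(a)/c$ only emerges after the Gauss-sum inversion; you acknowledge this but do not carry it out. In short, the strategy is a valid and genuinely different reorganization of the paper's argument, but the proof is deferred precisely where the content lies.
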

\begin{proof}
When $D| c$ we may appeal directly to [\textbf{\ref{IK}}; Theorem 4.13], and when $(D,c)=1$ we may apply [\textbf{\ref{IK}}; Theorem 4.14]. Therefore, we may assume that $1<(c,D) < D$.

For technical reasons it is convenient to prove a slightly more general result, from which \eqref{eqn:summation formula} will follow. Given $\nu \in \mathbb{C}$ we define
\begin{align*}
\tau_\nu(n,\psi) := \sum_{m_1m_2 = n} \psi(m_2) \Big(\frac{m_2}{m_1} \Big)^\nu,
\end{align*}
and then
\begin{align*}
S := \sum_n \tau_\nu(n,\psi)e \Big(\frac{an}{c} \Big) g(n).
\end{align*}
Observe that when $\nu = 0$ we recover the left side of $\eqref{eqn:summation formula}$. 

We open the function $\tau_\nu(n,\psi)$ and apply smooth partitions of unity on the resulting sums. This gives
\begin{align*}
S= \sum_{M,N}\sum_{m,n} \psi(n)  \Big(\frac{n}{m} \Big)^\nu e \Big(\frac{amn}{c} \Big)g(mn) \omega\Big( \frac{m}{M} \Big) \omega \Big(\frac{n}{N} \Big) =\sum_{M,N} S_{M,N},
\end{align*}
say. We then split the sum over $m$ into arithmetic progressions modulo $c$, and the sum over $n$ into progressions modulo $c$ and $D$. This yields
\begin{align*}
S_{M,N} &= \sum_{u(\textup{mod }c)} \sum_{v(\textup{mod }c)} e \Big(\frac{auv}{c} \Big)\sum_{r(\textup{mod }D)} \psi(r) \sum_{m \equiv u (\textup{mod }c)}\sum_{\substack{n \equiv v (\textup{mod }c) \\ n \equiv r (\textup{mod }D)}}\Big(\frac{n}{m} \Big)^\nu g(mn) \omega\Big( \frac{m}{M} \Big) \omega \Big(\frac{n}{N} \Big).
\end{align*}
In order for the congruences on $n$ to be compatible, we must have $r \equiv v(\text{mod}\ (c,D))$. We may use the Chinese Remainder Theorem to combine the congruences on $n$ into a single congruence $n \equiv \gamma(\text{mod}\ [c,D])$. Performing Poisson summation in $m$ and $n$, we obtain
\begin{align*}
S_{M,N} &= \frac{1}{c[c,D]} \sum_{h,k \in \mathbb{Z}}\sum_{u(\textup{mod }c)} \sum_{v(\textup{mod }c)}e \Big(\frac{auv}{c} \Big)\sum_{\substack{r(\textup{mod }D) \\ r \equiv v (\textup{mod }(c,D))}} \psi(r) e \Big(\frac{hu}{c} \Big)e \Big(\frac{k\gamma}{[c,D]} \Big) \\
&\qquad\qquad \int_0^\infty \int_0^\infty \Big(\frac{y}{x} \Big)^\nu e \Big(\frac{-hx}{c} \Big)e \Big(\frac{-ky}{[c,D]} \Big) g(xy)\omega\Big( \frac{x}{M} \Big) \omega \Big(\frac{y}{N} \Big) dx dy.
\end{align*}
We then perform the summation in $u$ to obtain
\begin{align*}
S_{M,N} &= \frac{1}{[c,D]}\sum_{h,k \in \mathbb{Z}}\sum_{\substack{r(\textup{mod }D) \\ r \equiv -\overline{a}h (\textup{mod }(c,D))}} \psi(r) e \Big(\frac{k\gamma}{[c,D]} \Big) \\
&\qquad\qquad \int_0^\infty \int_0^\infty \Big(\frac{y}{x} \Big)^\nu e \Big(\frac{-hx}{c} \Big)e \Big(\frac{-ky}{[c,D]} \Big) g(xy)\omega\Big( \frac{x}{M} \Big) \omega \Big(\frac{y}{N} \Big) dx dy,
\end{align*}
where the congruence class $\gamma = \gamma(r,h)$ satisfies $\gamma \equiv -\overline{a}h(\text{mod}\ c), \gamma \equiv r (\text{mod}\ D)$.

There is no contribution to $S_{M,N}$ from $h = 0$. Indeed, we then have $r \equiv 0 (\textup{mod }(c,D))$, but $(c,D) > 1$, and therefore $(r,D) > 1$, so $\psi(r) = 0$. We may therefore assume that $h \neq 0$.

We next examine the contribution from $k=0, h \neq 0$. We sum over $N$ to remove the factors $\omega(y/N)$, and arrive at
\begin{align*}
\sum_M \frac{1}{[c,D]} \mathop{\sum}_{h\neq 0}\sum_{\substack{r(\textup{mod }D) \\ r \equiv -\overline{a}h (\textup{mod }(c,D))}} \psi(r)\int_0^\infty \int_0^\infty \left(\frac{y}{x} \right)^\nu e \Big(\frac{-hx}{c} \Big)g(xy)\omega\Big( \frac{x}{M} \Big) dx dy.
\end{align*}
Since $D \nmid c$ and $\psi$ is a primitive character, we have
\begin{align*}
\sum_{\substack{r(\textup{mod }D) \\ r \equiv -\overline{a}h (\textup{mod }(c,D))}} \psi(r) = 0.
\end{align*}

It therefore remains to handle the contribution from $hk \neq 0$. The main task is to compute a closed formula for the sum
\begin{align*}
T := \sum_{\substack{r(\textup{mod }D) \\ r \equiv -\overline{a}h (\textup{mod }(c,D))}} \psi(r)e \Big(\frac{k\gamma}{[c,D]} \Big) .
\end{align*}
We have $[c,D] = cD_c$, and since $(c,D_c) = 1$ we may use reciprocity to write
\begin{align*}
\frac{k\gamma}{[c,D]} &= - \frac{\overline{a D_c} hk}{c} + \frac{kr \overline{c}}{D_c} \ (\textup{mod }1).
\end{align*}
It follows that
\begin{align*}
T &= e \Big(\frac{-\overline{a D_c}hk}{c} \Big) \sum_{\substack{r(\textup{mod }D) \\ r \equiv -\overline{a}h (\textup{mod }(c,D))}} \psi(r) e \Big(\frac{kr \overline{c}}{D_c} \Big).
\end{align*}
We have $D = (c,D) D_c$, and since $D$ is squarefree we have $((c,D),D_c)=1$. We may then factor $\psi = \psi_1 \psi_2$, where $\psi_1$ is a primitive character modulo $(c,D)$ and $\psi_2$ is a primitive character modulo $D_c$. We may use the Chinese Remainder Theorem to write $r(\text{mod}\ D)$ as $\alpha D_c + \beta (c,D)$, where $\alpha$ ranges over $(\mathbb{Z}/(c,D)\mathbb{Z})^\times$ and $\beta$ ranges over $(\mathbb{Z}/D_c\mathbb{Z})^\times$. We therefore have $\alpha \equiv - \overline{a D_c}h(\text{mod}\ (c,D))$, which fixes $\alpha$. Furthermore, we have
\begin{align*}
\psi(r) = \psi_1(\alpha D_c) \psi_2\big(\beta (c,D)\big),
\end{align*}
so that
\begin{align*}
T &= e \Big(\frac{-\overline{a D_c}hk}{c} \Big) \psi_1(-ah) \psi_2\big((c,D)\big)\sum_{\substack{\beta (\textup{mod }D_c) }} \psi_2(\beta)e \Big(\frac{k \beta (c,D) \overline{c}}{D_c} \Big).
\end{align*}
The sum over $\beta$ is a Gauss sum for $\psi_2$. Since $\psi_2$ is primitive we have
\begin{align*}
\sum_{\substack{\beta (\textup{mod }D_c) }} \psi_2(\beta)e \Big(\frac{k \beta (c,D) \overline{c}}{D_c} \Big) &= \tau(\psi_2)\psi_2\big(ck(c,D)\big) ,
\end{align*}
and therefore
\begin{align*}
T &=\tau(\psi_2) e \Big(\frac{-\overline{a D_c}hk}{c} \Big) \psi_1(-ah) \psi_2(ck) .
\end{align*}

After reassembling the smooth partitions of unity we find that
\begin{align*}
S &= \tau(\psi_2)\frac{\psi_1(-a)\psi_2(c)}{cD_c} \sum_{\substack{h,k\in\mathbb{Z}\\hk \neq 0}} e \Big(\frac{-\overline{a D_c}hk}{c} \Big)\psi_1(h) \psi_2(k)\\
&\qquad\qquad\int_0^\infty \int_0^\infty  \Big( \frac{y}{x}\Big)^\nu e \Big(\frac{-hx}{c} \Big) e \Big(\frac{-ky}{[c,D]} \Big) g(xy)dx dy.
\end{align*}
Define the integral
\begin{align*}
I(\epsilon_1 A,\epsilon_2 B) := \int_0^\infty \int_0^\infty  \Big( \frac{y}{x}\Big)^\nu e \left(\epsilon_1 A x +\epsilon_2 By \right) g(xy)dx dy,
\end{align*}
where $\epsilon_i \in \{\pm 1\}$ and $A,B > 0$ are real numbers. Changing the variables
\begin{align*}
(x,y) \longrightarrow \bigg(u \sqrt{\frac{v B}{A}}, \frac{1}{u} \sqrt{\frac{vA}{B}} \bigg)
\end{align*}
gives
\begin{align*}
I(\epsilon_1 A, \epsilon_2 B) = \Big(\frac{A}{B} \Big)^\nu \int_0^\infty g(v) \int_0^\infty u^{-(2\nu+ 1)} e \big(\sqrt{vAB} (\epsilon_1 u + \epsilon_2 u^{-1}) \big) du dv.
\end{align*}

The sum on $h$ and $k$ can be re-written as
\begin{align*}
&\sum_{h,k \geq 1} e \Big(\frac{-\overline{a D_c}hk}{c} \Big) \psi_1(h)\psi_2(k) \bigg(I\Big(\frac{-h}{c},\frac{-k}{[c,D]} \Big) + \psi(-1)I\Big(\frac{h}{c},\frac{k}{[c,D]} \Big) \bigg) \\
&\qquad\qquad+ \sum_{h,k \geq 1}e \Big(\frac{-\overline{a D_c}hk}{c} \Big) \psi_1(h)\psi_2(k) \bigg(\psi_2(-1)I\Big(\frac{-h}{c},\frac{k}{[c,D]} \Big) + \psi_1(-1)I\Big(\frac{h}{c},\frac{-k}{[c,D]} \Big) \bigg).
\end{align*}
We finish the proof by writing $I(\cdot,\cdot)$ as an integral transform of $g$ by arguing as in the proof of Theorem 4.13 in [\textbf{\ref{IK}}] and using the integral identities (4.112)--(4.115) in [\textbf{\ref{IK}}]. We obtain \eqref{eqn:summation formula} via analytic continuation on sending $\nu \rightarrow 0$.
\end{proof}

\section*{Acknowledgments}
During part of this work the second author was supported by the National Science Foundation Graduate Research Program under grant number DGE-1144245.

The authors thank the anonymous referee for their comments on a previous version of this manuscript.

\section*{Conflict of interest}

On behalf of all authors, the corresponding author states that there is no conflict of interest.

\section*{Data availability}

Data sharing not applicable to this article as no datasets were generated or analysed during the current study.


\begin{thebibliography}{99}
\bibitem{BC}\label{BC}
S. Bettin, V. Chandee, {\it Trilinear forms with Kloosterman fractions}, Adv. Math. \textbf{328} (2018), 1234--1262.

\bibitem{BFKMM}\label{BFKMM}
V. Blomer, \'E. Fouvry, E. Kowalski, P. Michel, D. Mili\'cevi\'c, \emph{On moments of twisted $L$-functions}, Amer. J. Math. \textbf{139} (2017), 707--768.

\bibitem{B}\label{B}
H. M. Bui, {\it Non-vanishing of Dirichlet L-functions at the central point}, Int. J. Number Theory \textbf{8} (2012), 1855--1881.
\bibitem{BM}\label{BM}
H. M. Bui, M. B. Milinovich, {\it Central values of derivatives of Dirichlet L-functions}, Int. J. Number Theory \textbf{7} (2011), 371--388.
\bibitem{BPRZ}\label{BPRZ}
H. M. Bui, K. Pratt, N. Robles, A. Zaharescu, {\it Breaking the $\frac12$-barrier for the twisted second moment of Dirichlet L-functions}, Adv. Math. \textbf{370} (2020), 107175, 40 pp.
\bibitem{CI}\label{CI}
J. B. Conrey, H. Iwaniec, {\it Critical zeros of lacunary L-functions}, Acta Arith. \textbf{195} (2020), no. 3, 217--268.

\bibitem{Dav}\label{Dav}
H.
 Davenport, \emph{Multiplicative number theory}, 3rd edition, revised and with a preface by H. L. Montgomery, Graduate Texts in Mathematics, vol. 74, Springer-Verlag, New York (2000).

\bibitem{DM19}\label{DM19}
S. Drappeau, J. Maynard, \emph{Sign changes of Kloosterman sums and exceptional characters}, Proc. Amer. Math. Soc. \textbf{147} (2019), 61--75.

\bibitem{DFI}\label{DFI} 
W. Duke, J. B. Friedlander, H. Iwaniec, {\it  A quadratic divisor problem}, Invent. Math. \textbf{115} (1994), 209--217.

\bibitem{FI03}\label{FI03}
J. B. Friedlander, H. Iwaniec, \emph{Exceptional characters and prime numbers in arithmetic progressions}, Int. Math. Res. Not., no. 37 (2003), 2033--2050.

\bibitem{FI17}\label{FI17}
J. B. Friedlander, H. Iwaniec, \emph{The temptation of the exceptional characters}. \emph{Exploring the Riemann zeta function}, Springer, Cham (2017), 67--81.

\bibitem{FI18}\label{FI18}
J. B. Friedlander, H. Iwaniec, \emph{A note on Dirichlet $L$-functions}. Expo. Math. 36 (2018), no. 3-4, 343--350.

\bibitem{GR}\label{GR}
I. S. Gradshteyn, I. M. Ryzhik, {\it Table of integrals, series, and products}, Academic Press, New York (1965).

\bibitem{HB83}\label{HB83}
D. R. Heath-Brown, \emph{Prime twins and Siegel zeros}, Proc. London Math. Soc. \textbf{47} (1983), 193--224.

\bibitem{H}\label{H}
B. Hough, {\it The angle of large values of L-functions}, J. Number Theory \textbf{167} (2016), 353--393.

\bibitem{Iw06}\label{Iw06}
H. Iwaniec, {\it Conversations on the exceptional character}. \emph{Analytic number theory}, 
Lecture Notes in Math., vol. 1891, Springer, Berlin (2006), 97--132.

\bibitem{IK}\label{IK} 
H. Iwaniec, E. Kowalski, {\it Analytic Number Theory}, American Mathematical Society Colloquium Publications, vol. 53, American Mathematical Society, Providence, RI (2004).
\bibitem{IS}\label{IS}
H. Iwaniec, P. Sarnak, \textit{Dirichlet L-functions at the central point}, Number Theory in Progress, vol. 2, de Gruyter, Berlin (1999), 941--952.

\bibitem{KS99}\label{KS99}
N. M. Katz, P. Sarnak, \emph{Zeroes of zeta functions and symmetry}, Bull. Amer. Math. Soc. \textbf{36} (1999), 1--26.

\bibitem{KN}\label{KN}
R. Khan, H. Ngo, {\it Nonvanishing of Dirichlet L-functions}, Algebra Number Theory \textbf{10} (2016), 2081--2091.
\bibitem{L}\label{L}
E. Landau, {\it Bemerkungen zum Heilbronnschen Satz}, Acta Arith. \textbf{1} (1935), 1--18.

\bibitem{MV}\label{MV}
P. Michel, J. VanderKam, {\it Non-vanishing of high derivatives of Dirichlet L-functions at
the central point}, J. Number Theory \textbf{81} (2000), 130--148.

\bibitem{Murty90}\label{Murty90}
M. R. Murty, \emph{On simple zeros of certain L-series}, Number theory (Banff, AB, 1988), de Gruyter, Berlin (1990), 427--439.

\bibitem{Si}\label{Si}
C. L. Siegel, {\it \" Uber die Classenzahl quadratischer Zahlk\" orper}, Acta Arith. \textbf{1} (1935), 83--86.

\bibitem{T}\label{T} 
E. C. Titchmarsh, {\it The theory of the Riemann zeta-function}, $2$nd edition, edited and with a preface by D. R. Heath-Brown, The Clarendon Press, Oxford University Press (1986).

\bibitem{Wiles}\label{Wiles}
Andrew Wiles. The Birch and Swinnerton-Dyer conjecture, \emph{The millennium prize problems}, 31--41, Clay Math. Inst., Cambridge, MA, 2006. 

\bibitem{Y}\label{Y}
M. P. Young, {\it The fourth moment of Dirichlet L-functions}, Ann. of Math. \textbf{173} (2011), 1--50.
\bibitem{Z}\label{Z}
R. Zacharias, {\it Mollification of the fourth moment of Dirichlet L-functions}, Acta Arith. \textbf{191} (2019), no. 3, 201--257.
\end{thebibliography}
\end{document}